\numberwithin{equation}{section}
\def\eps{{\varepsilon}}
\newcommand{\qq}{{\mathbb Q}}
\newcommand{\cc}{{\mathbb C}}
\newcommand{\pp}{{\mathbb P}}
\newcommand{\Pic}{{\rm{Pic}}}
\newcommand{\mbar}{\overline{\mathcal{M}}}
\theoremstyle{plain}
\newtheorem{theorem}{Theorem}[section]
\newtheorem{lemma}[theorem]{Lemma}
\newtheorem{proposition}[theorem]{Proposition}
\theoremstyle{definition}
\title{Pencils on surfaces with normal crossings and the Kodaira dimension of $\overline{\mathcal{M}}_{g,n}$}
\author{Daniele Agostini and Ignacio Barros}
\address[D. Agostini]{
	MPI for Mathematics in the Sciences\\
	Inselstrasse 22, 04103 - Leipzig\\
	Germany.} 
\email{daniele.agostini@mis.mpg.de}
\address[I. Barros]{
Laboratoire de math\'ematiques d'Orsay\\
Universit\'e Paris-Saclay\\
Rue Michel Magat, B\^at. 307\\
Orsay, 91405, France} 
\email{ignacio.barros-reyes@universite-paris-saclay.fr}
\begin{document}

\maketitle

\begin{abstract}
We study smoothing of pencils of curves on surfaces with normal crossings. As a consequence we show that the canonical divisor of $\overline{\mathcal{M}}_{g,n}$ is not pseudo-effective in some range, implying that $\overline{\mathcal{M}}_{12,6},\overline{\mathcal{M}}_{12,7},\overline{\mathcal{M}}_{13,4}$ and $\overline{\mathcal{M}}_{14,3}$ are uniruled. We provide upper bounds for the Kodaira dimension of $\overline{\mathcal{M}}_{12,8}$ and $\overline{\mathcal{M}}_{16}$. We also show that the moduli of $(4g+5)$-pointed hyperelliptic curves $\overline{\mathcal{H}}_{g,4g+5}$ is uniruled. Together with a recent result of Schwarz, this concludes the classification of moduli of pointed hyperelliptic curves with negative Kodaira dimension.\\

\noindent
2010 Mathematics Subject Classification:  14H10 (primary); 14H45, 14M20 (secondary)
\end{abstract}


\nopagebreak



\section{Introduction}

It has long been established that for $g\geq2$, the moduli spaces $\overline{\mathcal{M}}_{g,n}$ are all of general type except for finitely many pairs $(g,n)$ occuring in relatively low genus. The description of the Kodaira dimension of $\overline{\mathcal{M}}_{g,n}$ 
is far from being complete, but it is unknown only for relatively small $g$ and $n$. Since Severi's conjecture \cite{Se} was disproven by Harris and Mumford \cite{HM}, computing the Kodaira dimension has become a major task in the study of moduli spaces of curves. See \cite{Se,CR1,CR2,EH1,EH2,BV,F, FJP,T, FV2} for an account on the results for $n=0$, and \cite{Lo, FPo, FV1, Be, BM,KT} for $n\geq1$. In the range $12\leq g\leq16$ there is no known example of moduli space $\overline{\mathcal{M}}_{g,n}$ of intermediate type. The state of the art in this range is summarized in the following table:
\begin{table}[h]
	\begin{tabular}{|l|l|l|l|l|l|}
		\hline
		\rule{0pt}{2.5ex}   &$\overline{\mathcal{M}}_{12,n}$&$\overline{\mathcal{M}}_{13,n}$&$\overline{\mathcal{M}}_{14,n}$&$\overline{\mathcal{M}}_{15,n}$&$\overline{\mathcal{M}}_{16,n}$ \\ \hline
		Uniruled&$n\leq 5$&$n\leq3$&$n\leq 2$&$n\leq 2$&\hbox{unknown}   \\ \hline
		Gen. Type&$n\geq 11$&$n\geq 11$&$n\geq 10$&$n\geq 10$&$n\geq 9$  \\ \hline
	\end{tabular}
\end{table}

Some of the moduli spaces in the above range are rationally connected or unirational, the latest contributions being \cite{V, BV} and recently \cite{KT}. Furthermore, very recently Farkas and Verra \cite{FV2} building on \cite{BV} showed that $\mbar_{16}$ is not of general type, i.e. the Kodaira dimension is bounded by $\operatorname{dim} \mbar_{16}-1$. \\

The standard argument in the literature to show that $\mbar_{g,n}$ is uniruled is to start with a general $n$-pointed curve and construct a surface $S$ such that $C$ moves on a pencil with the $n$ marked points in the base locus. This becomes significantly harder as either $g$ or $n$ grows. As an alternative one can show that $K_{\overline{\mathcal{M}}_{g,n}}$ is not pseudo-effective to conclude uniruledness, cf. \cite{BDPP}. In the range $5\leq g\leq 10$ this was used by Farkas and Verra \cite{FV1} to obtain uniruledness of $\overline{\mathcal{M}}_{g,n}$ for the highest $n$ known. The key point in this strategy is the computation of intersection numbers on $\mbar_{g,n}$ with respect to curves in $\mbar_{g,n}$ arising as a pencil on a smooth surface $S$. To do so, it is essential to have a complete understanding of the pencil $\Gamma$: our main technical result, Proposition \ref{prop:smoothing}, is a criterion for smoothing pencils on a reducible surface $S_1\cup S_2$. The analysis of the pencil can then be reduced to the two surfaces $S_i$, where it is easier.\\


As a consequence, we have our main contribution to the Kodaira classification of $\mbar_{g,n}$: 

\begin{theorem}
	\label{thm1}
	The moduli spaces $\overline{\mathcal{M}}_{12,7},\overline{\mathcal{M}}_{12,6},\overline{\mathcal{M}}_{13,4}$ and $\overline{\mathcal{M}}_{14,3}$ are uniruled.
\end{theorem}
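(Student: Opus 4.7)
The plan is to apply the BDPP criterion: for each pair $(g,n) \in \{(12,7),(12,6),(13,4),(14,3)\}$ we exhibit a covering family of curves $\Gamma \subset \mbar_{g,n}$ with $K_{\mbar_{g,n}} \cdot \Gamma < 0$, so that $K_{\mbar_{g,n}}$ cannot be pseudo-effective and uniruledness follows. The family $\Gamma$ is obtained from a pencil of pointed curves on a smooth surface, produced by the smoothing criterion of Proposition~\ref{prop:smoothing}.

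For the construction, starting from a general $n$-pointed curve $(C,p_1,\dots,p_n)$ of the required genus, I would place $C$ on a reducible surface $S_1 \cup S_2$ with normal crossings along a curve, chosen so that the traces $C \cap S_i$ have substantially smaller invariants and the linear system cut out on each $S_i$ is well understood. Natural building blocks in this genus range are $K3$, del Pezzo, or ruled surfaces glued along a sufficiently positive curve; the $S_i$ should be arranged so that the marked points $p_j$ lie in the base locus of a pencil $|D|$ containing $C$ on $S_1 \cup S_2$. The point of the reducible model is that for a general $(C,p_1,\dots,p_n)$ in genus $12\text{--}14$ no single smooth surface carries such a pencil with so many base points, whereas on a reducible surface both halves only have to handle part of the data.

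Once the reducible pencil is constructed, I would invoke Proposition~\ref{prop:smoothing} to deform $(S_1 \cup S_2, |D|)$ to a pencil on a smooth surface $S_t$, giving a family $\Gamma$ of pointed curves in $\mbar_{g,n}$. Generality of $(C,p_1,\dots,p_n)$ guarantees that $\Gamma$ moves in a covering family. It remains to compute $\lambda \cdot \Gamma$, the boundary classes $\delta_{\mathrm{irr}} \cdot \Gamma,\,\delta_i \cdot \Gamma$, and the psi-classes $\psi_j \cdot \Gamma$; the advantage of the reducible model is that these intersection numbers split as contributions from $S_1$ and $S_2$, each of which is explicit from the surface geometry. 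Plugging into Mumford's formula
\begin{equation}
K_{\mbar_{g,n}} \;=\; 13\lambda - 2\delta_{\mathrm{irr}} - 3\delta_1 - 2\delta_2 - \cdots - 2\delta_{\lfloor g/2\rfloor} + \sum_{j=1}^{n}\psi_j
\end{equation}
must then yield $K_{\mbar_{g,n}} \cdot \Gamma < 0$ in each of the four cases.

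The main obstacle is the surface construction: for each $(g,n)$ one has to find a reducible $S_1 \cup S_2$ that simultaneously contains a general pointed curve, supports a pencil with the $p_j$ as base points, satisfies the hypotheses of Proposition~\ref{prop:smoothing}, and whose intersection numerics beat the negative-against-$K$ threshold. I expect the four cases to require four separate, carefully balanced choices, with the delicate trade-off being between making the traces on $S_i$ simple enough to analyze and keeping $\lambda \cdot \Gamma$ small relative to the boundary and $\psi$ contributions. The smoothing proposition removes the most rigid part of the problem, reducing it to a surface-by-surface check on each component.
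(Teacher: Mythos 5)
Your overall philosophy (BDPP plus the smoothing criterion of Proposition~\ref{prop:smoothing}) matches the paper only for half of the theorem, and even there a central mechanism is missing. For $\mbar_{13,4}$ and $\mbar_{14,3}$ you cannot put all the marked points in the base locus of a pencil on a single surface: Brill--Noether numerology gives $\dim W^2_{11}(C)=1$ in genus $13$ and $\dim W^1_8(C)=0$ in genus $14$, so on the relevant canonical surfaces (a quintic in $\pp^3$, resp.\ a $(2,2,2,2)$ complete intersection in $\pp^6$) the linear system $|\mathcal{O}_S(C)\otimes\mathcal{I}_{p_1,\dots,p_n}|$ is a pencil only for $n=3$, resp.\ $n=1$, of the markings. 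The remaining marked points have to be produced differently: the paper chooses covering curves $D_1,\dots,D_k\subset S$ away from the base locus and replaces $\Gamma$ by the iterated fiber product $\Theta=D_1\times_\Gamma\cdots\times_\Gamma D_k$, whose diagonal sections supply the extra points; Proposition~\ref{prop2.sec1} then computes $(\Theta\cdot K_{\mbar_{g,n+k}})$, and the negativity test hinges on the correction terms $(K_S\cdot D_i)/(D_i\cdot C)$ and $-\sum_{i<j}(D_i\cdot D_j)$, not merely on the numerics of the pencil itself. Without this lifting step your plan cannot reach $n=4$ in genus $13$ or $n=3$ in genus $14$.

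For $\mbar_{12,6}$ and $\mbar_{12,7}$ the proposal is off track: no pencil through a general $6$- or $7$-pointed genus-$12$ curve is constructed, and Proposition~\ref{prop:smoothing} plays no role in the paper here. Instead one takes a Mukai pencil on a general genus-$11$ K3 surface through $9$ general points, giving a nef class $\gamma$ on $\mbar_{11,9}$, and maps it into $\mbar_{12,7}$ via the gluing map $\theta$ onto $\Delta_{\mathrm{irr}}$. Since $\theta_*\gamma$ covers only the boundary divisor and not $\mbar_{12,7}$, the naive BDPP argument does not apply; one needs the refinement of Lemma~\ref{lemma1.sec3}, writing $mK_{\mbar_{12,7}}\equiv a\,\delta_{\mathrm{irr}}+D$ with $D$ effective and supported away from $\Delta_{\mathrm{irr}}$, and using $(\gamma\cdot\theta^*\delta_{\mathrm{irr}})>0$ together with $(\gamma\cdot\theta^*K_{\mbar_{12,7}})=-1$ to reach a contradiction. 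Finally, a small but genuine error in your canonical class: for $n\geq1$ one has $K_{\mbar_{g,n}}=13\lambda+\sum_i\psi_i-2\delta-\delta_{1:\varnothing}$, so only $\delta_{1:\varnothing}$ carries coefficient $-3$, whereas your formula assigns $-3$ to all of $\delta_1$.
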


Moreover, we can provide the following bound for $\mbar_{12,8}$.

\begin{theorem}
	\label{thm3}
	The Kodaira dimension of $\overline{\mathcal{M}}_{12,8}$ is bounded by $\dim \overline{\mathcal{M}}_{12,8}-2$.
\end{theorem}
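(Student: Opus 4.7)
The strategy is to adapt the pencil-based uniruledness argument of Theorem \ref{thm1} to the $8$-pointed setting, where the construction no longer produces a fully covering family of rational curves. The plan is to exhibit a dominant rational map $f \colon \mbar_{12,8}\dashrightarrow Y$ to a variety of dimension at most $\dim \mbar_{12,8} - 2 = 39$ whose general fibre is swept out by rational curves of non-positive canonical degree, which yields the desired upper bound on $\kappa(\mbar_{12,8})$.

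\medskip

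Concretely, I would start from the pencil $\Gamma$ on a smoothable reducible surface $S = S_1 \cup S_2$ whose smoothing, via Proposition \ref{prop:smoothing}, realises the uniruledness of $\mbar_{12,7}$: $\Gamma$ is a rational curve through a general point of $\mbar_{12,7}$ with $\Gamma \cdot K_{\mbar_{12,7}} < 0$. To lift $\Gamma$ to $\mbar_{12,8}$, I specify an $8$-th marked point along a section $\sigma$ of the universal curve over $\Gamma$; this produces a rational curve $\Gamma_\sigma \subset \mbar_{12,8}$. Allowing the pair $(S, \sigma)$ to vary produces a family of such rational curves whose parameter space will play the role of $Y$.

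\medskip

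The central intersection calculation decomposes as
\[
\Gamma_\sigma \cdot K_{\mbar_{12,8}} = \Gamma \cdot K_{\mbar_{12,7}} + \Gamma_\sigma \cdot \psi_8,
\]
writing $K_{\mbar_{12,8}} = 13\lambda - 2\delta + \sum_{i=1}^{8}\psi_i$. The first summand is the negative quantity already controlled in the proof of Theorem \ref{thm1}, where Proposition \ref{prop:smoothing} reduces the contributions from $\lambda$ and $\delta$ to intersection numbers on the components $S_1$, $S_2$ and the gluing curve. The second summand is the self-intersection of $\sigma$ on a smoothing of $S$; selecting $\sigma$ with sufficiently small self-intersection preserves $\Gamma_\sigma \cdot K_{\mbar_{12,8}} \leq 0$. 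A careful dimension count for the family $\{(S,\sigma)\}$ then identifies $f$ as a rational map with $39$-dimensional image and fibres along which $K_{\mbar_{12,8}}$ has non-positive degree.

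\medskip

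The main obstacle is a delicate balancing act. On one hand, the $\psi_8$-contribution must remain small enough not to destroy the negativity of the $7$-pointed intersection. On the other, the family of pairs $(S,\sigma)$ must have parameter dimension exactly $39$: large enough that $f$ dominates $\mbar_{12,8}$ and the bound on $\kappa$ follows, but not so large that the construction would give outright uniruledness, which the addition of the $8$-th point genuinely obstructs. The extra flexibility afforded by Proposition \ref{prop:smoothing} — in choosing the individual components $S_1$, $S_2$, and the gluing data — is precisely what supplies the auxiliary moduli needed to make this balance work, and it is here that the smoothing result is indispensable.
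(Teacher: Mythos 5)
Your proposal starts from an object the paper never constructs and which is not known to exist: a good pencil through a general point of $\overline{\mathcal{M}}_{12,7}$ (equivalently, a surface containing a general genus-$12$ curve on which that curve moves in a pencil with the marked points in the base locus). The genus-$12$ results in the paper are \emph{not} obtained from Proposition \ref{prop:smoothing} and reducible surfaces; a general curve of genus $12$ does not lie on a K3 surface, which is precisely why the paper works instead with a genus-$11$ K3 surface $(S,H)$, Mukai's description of $\overline{\mathcal{M}}_{11,10}$, and the gluing map $\theta\colon\overline{\mathcal{M}}_{11,10}\to\overline{\mathcal{M}}_{12,8}$ onto the boundary divisor $\Delta_{\mathrm{irr}}$. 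The nef class $\gamma$ coming from the pencil $|\mathcal{O}_S(H)\otimes\mathcal{I}_{x_1,\dots,x_{10}}|$ satisfies $(\gamma\cdot\theta^*K_{\overline{\mathcal{M}}_{12,8}})=0$ and $(\gamma\cdot\theta^*\delta_{\mathrm{irr}})>0$, and Lemma \ref{lemma1.sec3} converts this into $\operatorname{Kod}(\overline{\mathcal{M}}_{12,8})\leq\kappa(\overline{\mathcal{M}}_{11,10},\theta^*K_{\overline{\mathcal{M}}_{12,8}})$; the latter is bounded by $\dim\overline{\mathcal{M}}_{11,10}-1=39$ because Mukai's $\mathbb{P}^1$-bundle $\mathcal{P}\to\mathcal{F}_{11,10}$ is birational to $\overline{\mathcal{M}}_{11,10}$ and its fibers meet $\theta^*K_{\overline{\mathcal{M}}_{12,8}}$ trivially. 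Your lift of a (hypothetical) pencil by an eighth section has no analogue of this, and the claimed decomposition $\Gamma_\sigma\cdot K_{\overline{\mathcal{M}}_{12,8}}=\Gamma\cdot K_{\overline{\mathcal{M}}_{12,7}}+\Gamma_\sigma\cdot\psi_8$ also omits the correction terms $-2\sum_i\delta_{0:\{i,8\}}+\delta_{1:\{8\}}$ in the pullback formula for the canonical class.

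Beyond the missing input, the concluding mechanism is not rigorous as stated. ``Fibres swept out by rational curves of non-positive canonical degree'' does not by itself bound the Kodaira dimension: if the degree is strictly negative you would be proving uniruledness (too strong, and you give no numbers), and if it is zero you still must rule out that a pluricanonical divisor contains a $\delta_{\mathrm{irr}}$-component before restricting to a fibration — this is exactly the role of the hypothesis $(\gamma\cdot\theta^*\delta_{\mathrm{irr}})>0$ and the exact sequence \eqref{eq2.sec3} in Lemma \ref{lemma1.sec3}. Finally, the ``balancing act'' paragraph defers all of the actual intersection numbers and the dimension count, which are the substance of the proof. To repair the argument you would essentially have to rebuild the paper's route: work on the boundary via $\theta$, produce a nef class with vanishing $\theta^*K$-degree, and exhibit the $\mathbb{P}^1$-fibration structure on (a birational model of) $\overline{\mathcal{M}}_{11,10}$.
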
  

A mistake in \cite[Proof of Thm. 0.1]{CR3} asserting the uniruledness of $\overline{\mathcal{M}}_{16}$ was recently found \cite{T}, leaving open the question about the Kodaira dimension of $\mbar_{16}$. Farkas and Verra \cite{FV2} recently proved that $\mbar_{16}$ is not of general type. We reprove their result with our methods and we improve the bound on the Kodaira dimension by one.

\begin{theorem}
	\label{thm2}
	The Kodaira dimension of $\overline{\mathcal{M}}_{16}$ is bounded by $\dim\overline{\mathcal{M}}_{16}-2$. 
\end{theorem}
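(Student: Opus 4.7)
The plan is to apply Proposition~\ref{prop:smoothing} to build a covering family of rational curves in $\overline{\mathcal{M}}_{16}$ that spans, through a general moduli point, a $2$-dimensional direction, and then invoke the MRC-fibration argument: if a general $[C]\in\overline{\mathcal{M}}_{16}$ lies on a rationally connected surface, the MRC fibration $\overline{\mathcal{M}}_{16}\dashrightarrow Y$ has general fiber of dimension at least $2$, and since pluricanonical forms are constant on rationally connected fibers this gives $\kappa(\overline{\mathcal{M}}_{16})\leq\dim Y\leq\dim\overline{\mathcal{M}}_{16}-2$.

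Concretely, I would first choose a normal-crossing degeneration $X_0=S_1\cup_E S_2$ whose smoothings $S$ contain a general smooth curve of genus $16$, with $S_1$ and $S_2$ of manageable type (for example, $S_1$ a K3 surface of appropriate genus and $S_2$ a blown-up rational or ruled surface), glued transversally along a smooth curve $E$. A suitable pencil $\Gamma_0$ on $X_0$ deforms, by Proposition~\ref{prop:smoothing}, to a pencil $\Gamma$ on $S$, inducing a rational curve $R\subset\overline{\mathcal{M}}_{16}$ through $[C]$. The intersection numbers $R\cdot\lambda$ and $R\cdot\delta_i$ split as contributions from $S_1$ and $S_2$ --- through the relative dualizing sheaf for $\lambda$, and by counting singular members of the pencil (including those arising from base points and from the nodes of the total space along $E$) for the boundary classes --- and inserting the result into the Harer--Mumford formula $K_{\overline{\mathcal{M}}_{16}}=13\lambda-2\delta$ verifies $K_{\overline{\mathcal{M}}_{16}}\cdot R\leq 0$. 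Varying the data yields a covering family, which already reproves the Farkas--Verra bound $\kappa(\overline{\mathcal{M}}_{16})\leq\dim\overline{\mathcal{M}}_{16}-1$.

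The new input needed to reach $\dim-2$ is a \emph{second} independent direction at $[C]$: by leaving one further parameter in the construction free --- an additional deformation modulus of $X_0$, or an extra degree of freedom for the pencil on one of the components --- one obtains, through a general $[C]$, a family of rational curves whose tangent directions span a rank-$2$ subspace of $T_{[C]}\overline{\mathcal{M}}_{16}$. Joining generic pairs of such rational curves at $[C]$ produces the rationally connected surfaces required above.

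The hardest step will be verifying this independence at a general $[C]$, that is, showing that the second parameter produces a genuinely distinct direction in moduli rather than merely a reparametrization of the same pencil. This reduces to a deformation-theoretic statement on the reducible surface $S_1\cup_E S_2$ of precisely the kind Proposition~\ref{prop:smoothing} is designed to handle, but a delicate one, as the two components must be arranged so that the pencil retains flexibility in two independent directions simultaneously after smoothing.
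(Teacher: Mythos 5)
Your proposal rests on a premise that is not available and that, if it were, would prove far more than the theorem: you assume a general smooth curve of genus $16$ lies on a surface $S$ on which it moves in a pencil, so that $\overline{\mathcal{M}}_{16}$ is covered by rational curves $R$ with $(K_{\overline{\mathcal{M}}_{16}}\cdot R)\leq 0$. No such construction is known in genus $16$ --- this is exactly the obstruction the paper is working around --- and if $\overline{\mathcal{M}}_{16}$ were covered by rational curves it would be uniruled, hence $K_{\overline{\mathcal{M}}_{16}}$ would fail to be pseudoeffective by \cite{BDPP} and $\Kod(\overline{\mathcal{M}}_{16})=-\infty$; the entire MRC-fibration superstructure (which in any case requires a genuinely rationally connected \emph{surface} through the general point, not merely two rational curves meeting there) would be moot. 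The fact that only the bound $\dim\overline{\mathcal{M}}_{16}-2$ is claimed is a signal that the curves actually used do \emph{not} pass through a general point of $\overline{\mathcal{M}}_{16}$.

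The paper's mechanism is different and is the step you are missing. One constructs a good pencil $\Gamma=|\mathcal{O}_S(C)\otimes\mathcal{I}_{p_1,p_2}|$ of genus-$15$ curves on a $(2,2,2,2)$ complete intersection surface $S\subset\pp^6$ (Lemma \ref{lemma:genus15pencil}, proved via Proposition \ref{prop:smoothing} applied to $S_o=S_1\cup S_2$ with $S_1$ a sextic del Pezzo and $S_2$ rational of degree $10$ --- not a K3). This $\Gamma$ is a covering curve of $\overline{\mathcal{M}}_{15,2}$, and its image under the gluing map $\theta\colon\overline{\mathcal{M}}_{15,2}\to\overline{\mathcal{M}}_{16}$ lies inside the boundary divisor $\Delta_{\rm irr}$. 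The intersection numbers \eqref{eq1.sec4} give $(\Gamma\cdot\theta^*K_{\overline{\mathcal{M}}_{16}})=0$, and since $(\Gamma\cdot\theta^*\delta_{\rm irr})>0$, Lemma \ref{lemma1.sec3} shows that every effective pluricanonical divisor must restrict nontrivially to $\Delta_{\rm irr}$, yielding $\Kod(\overline{\mathcal{M}}_{16})\leq\kappa(\overline{\mathcal{M}}_{15,2},\theta^*K_{\overline{\mathcal{M}}_{16}})$. The final drop to $\dim\overline{\mathcal{M}}_{16}-2$ comes from exhibiting $\overline{\mathcal{M}}_{15,2}$ birationally as a $\pp^1$-bundle over the space of marked $(2,2,2,2)$-surfaces, whose fibers intersect $\theta^*K_{\overline{\mathcal{M}}_{16}}$ trivially and hence must be contracted by the Iitaka fibration. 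To repair your argument you would need to abandon the MRC route entirely and supply these two reductions (the pullback lemma through $\Delta_{\rm irr}$ and the fibration bound on the Iitaka dimension), neither of which appears in your proposal.
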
 

Finally, in \cite{Sc} it is proved that the moduli space of pointed hyperelliptic curves $\overline{\mathcal{H}}_{g,n}$ of genus $g\geq 2$ is of general type for $n\geq 4g+7$ and it has non-negative Kodaira dimension for $n=4g+6$. Moreover, it was known to be rational for $n\leq 2g+8$ and uniruled for $n\leq 4g+4$, cf. \cite{Ca, Be}. Our result is that $\overline{\mathcal{H}}_{g,4g+5}$ is uniruled.

\begin{proposition}
	\label{prop1}
	The moduli space of pointed hyperelliptic curves $\mathcal{H}_{g,4g+5}$ is covered by rational surfaces for $g\geq 2$.
\end{proposition}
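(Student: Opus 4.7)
Plan.

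Our strategy is to bootstrap the known uniruledness of $\mathcal{H}_{g,4g+4}$ to a $2$-dimensional rational family covering $\mathcal{H}_{g,4g+5}$, by letting the total space of the covering pencil itself parametrize the position of the additional marked point.

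Let $(C,p_1,\dots,p_{4g+5})$ be a general point of $\mathcal{H}_{g,4g+5}$. Forgetting $p_{4g+5}$ we obtain a general point $(C,p_1,\dots,p_{4g+4})$ of $\mathcal{H}_{g,4g+4}$, which we first realize as the base locus of a pencil of curves of bidegree $(2,g+1)$ on $\mathbb{P}^1\times\mathbb{P}^1$. Choose a square root $L\in\mathrm{Pic}^{g+1}(C)$ of $\mathcal{O}_C(p_1+\cdots+p_{4g+4})\otimes\mathcal{O}_C(-(g+1)\mathfrak{g}^1_2)$. By Riemann--Roch every line bundle of degree $g+1$ on a genus $g$ curve satisfies $h^0\ge 2$, hence $L$ is a $\mathfrak{g}^1_{g+1}$. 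For a generic choice among the $2^{2g}$ square roots, $L$ is basepoint-free and the product map $(\mathfrak{g}^1_2,|L|)\colon C\hookrightarrow\mathbb{P}^1\times\mathbb{P}^1$ realizes $C$ as a smooth $(2,g+1)$-curve. The identification $\mathcal{O}_C(\textstyle\sum p_i)\cong\mathcal{O}_{\mathbb{P}^1\times\mathbb{P}^1}(2,g+1)|_C$, together with the surjectivity of the restriction map $H^0(\mathbb{P}^1\times\mathbb{P}^1,\mathcal{O}(2,g+1))\to H^0(C,\mathcal{O}_C(\sum p_i))$ (immediate from $H^1(\mathbb{P}^1\times\mathbb{P}^1,\mathcal{O})=0$), yields a second $(2,g+1)$-curve $C'$ meeting $C$ exactly in $p_1,\dots,p_{4g+4}$. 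The pencil spanned by $C$ and $C'$ has these $4g+4$ points as its base locus.

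Now let $S$ be the blow-up of $\mathbb{P}^1\times\mathbb{P}^1$ at $\{p_1,\dots,p_{4g+4}\}$. Then $S$ is a rational surface carrying the fibration $\pi\colon S\to\mathbb{P}^1$ whose generic fibre is a smooth hyperelliptic curve of genus $g$, together with $4g+4$ disjoint sections $\sigma_1,\dots,\sigma_{4g+4}$ (the exceptional divisors). Define the rational map
\[
\phi\colon S\dashrightarrow\mathcal{H}_{g,4g+5},\qquad q\mapsto\bigl(\pi^{-1}(\pi(q)),\sigma_1(\pi(q)),\dots,\sigma_{4g+4}(\pi(q)),q\bigr),
\]
which is a morphism on the open locus where the fibre through $q$ is smooth and $q$ avoids the sections. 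Because the last marked point tautologically records $q$, the map $\phi$ is generically injective, and $\phi(S)$ is a $2$-dimensional rational subvariety of $\mathcal{H}_{g,4g+5}$. By construction $C$ appears as $\pi^{-1}(t_0)$ for some $t_0\in\mathbb{P}^1$, and $p_{4g+5}\in C\subset S$ is a point $q_0$ with $\phi(q_0)=(C,p_1,\dots,p_{4g+5})$. Running this for every general point of $\mathcal{H}_{g,4g+5}$ shows that the moduli space is covered by such rational surfaces.

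The main delicate point is the first step: ensuring that at least one of the $2^{2g}$ square roots $L$ is basepoint-free so that the embedding of $C$ really is a smooth $(2,g+1)$-curve, rather than a multiple cover of a lower-degree curve. This is a Brill--Noether verification on the hyperelliptic curve; the locus where it can fail is of positive codimension and hence does not affect the generic pointed curve, though the small genus case $g=2$ should be inspected directly.
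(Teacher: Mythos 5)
Your argument is correct and is essentially the paper's proof: the pencil of $(2,g+1)$-curves on $\mathbb{P}^1\times\mathbb{P}^1$ through $p_1,\dots,p_{4g+4}$ is exactly Benzo's construction (which the paper simply cites), and the rational covering surface is the total space of the resolved pencil, i.e.\ the blow-up of $\mathbb{P}^1\times\mathbb{P}^1$ at the base points, mapping to $\mathcal{H}_{g,4g+5}$ viewed as the universal curve over $\mathcal{H}_{g,4g+4}$. The only difference is that you re-derive Benzo's input from scratch via the square root $L$ of $\mathcal{O}_C(\sum p_i)\otimes\mathcal{O}_C(-(g+1)\mathfrak{g}^1_2)$; the basepoint-freeness issue you flag is indeed handled by the codimension count you sketch (the locus of special or non-basepoint-free bundles in $\operatorname{Pic}^{g+1}(C)$ has dimension at most $g-1$, so a general $\mathcal{O}_C(\sum p_i)$ avoids its image under the squaring isogeny), including for $g=2$.
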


The basic principle in our proofs is the following: when $\overline{\mathcal{M}}_{g,n}$ is covered by rational curves $\Gamma\subset \overline{\mathcal{M}}_{g,n}$, the family of curves $\mathcal{X}\longrightarrow \Gamma$ is a covering surface for $\overline{\mathcal{M}}_{g,n+1}$ and the same holds for $\mathcal{H}_{g,n}$. Proposition \ref{prop1} is obtained by simply observing that the family $\mathcal{X}$ over the covering rational curve of $\mathcal{H}_{g,4g+4}$ constructed in \cite{Be} is a rational surface.\\ 

For genera $13$ and $14$, we apply the same principle to the constructions of \cite{Be} and \cite{V,BV} but in these cases the covering surface $\mathcal{X}$ are no longer rational, they are in fact of general type. However, we can find positive curves on (covers of) $\mathcal{X}$ that intersect negatively the canonical divisor
of $\overline{\mathcal{M}}_{g,n+1}$ to obtain uniruledness. This comes at the cost of having to compute intersection numbers in $\overline{\mathcal{M}}_{g,n}$ for which, as we said before, it is essential to have a complete understanding of the singular elements in the rational curve $\Gamma\subseteq \mbar_{g,n}$. 

In particular we have to rule out unstability and to do so we need to analyze carefully the curve $\Gamma$. In the cases of genera $13$ and $14$ a general curve $[C]\in\overline{\mathcal{M}}_{g}$ sits in a smooth canonical surface $S\subseteq \pp^n$ in such a way that the linear system $|\mathcal{O}_S(C)|$ on $S$ is positive dimensional. Then $\Gamma$ is induced by a pencil in this linear system and to study its properties we specialize to particular surfaces $S$. Specializing to particular surfaces to obtain information about behavior of general curves has been extensively used throughout the literature on curves and their moduli, with rational surfaces and K3 surfaces having a distinguished role. Inspired by \cite{V, BV} we specialize to canonical normal crossing surfaces 
$$S_1\cup S_2\subset \pp^r$$
where $S_1$ and $S_2$ are rational. \\

In Section \ref{sec2} we develop the aforementioned criteria for smoothing pencils on any normal crossing surface $S_1\cup S_2$ and apply these to obtain Theorem \ref{thm1}, for genus $13$ and $14$. We hope that these criteria could be useful for further applications. \\

To deduce the remaining results in  genus $12$ we exploit pencils on $K3$ surfaces. The strategy in this case is inspired by \cite{CR3}, where they obtain nef classes $\mbar_{g,n}$ by taking a family of curves on $\mbar_{g-1,n+2}$ and glue together the two last marked points.\\

Finally, the result in genus $16$ is obtained in the same way, looking at the marked curves in $\mbar_{15,2}$ constructed in \cite{BV,FV2}. \\

The structure of the paper is as follows. In Section \ref{sec1} we state the general principle more precisely and we prove Proposition \ref{prop1}. Moreover, we construct nef curve classes $\Theta$ on $\overline{\mathcal{M}}_{g,n+k}$ coming from a family of genus $g$ curves $\mathcal{X}$ over a pencil $\Gamma$ in $\overline{\mathcal{M}}_{g,n}$. We also compute the intersections of $\Theta$ with the generators of the Picard group of $\overline{\mathcal{M}}_{g,n+k}$ assuming that $\Gamma$ is a good pencil in the sense of condition $(\star\star)$ below. In Section \ref{sec2} we study in general pencils on normal crossing surfaces and establish criteria for their smoothability. In Section \ref{sec3} we apply the previous results for $g=13,14$ and deduce Theorem \ref{thm1} in these cases. In Section \ref{sec5} we use pencils on $K3$ surfaces to deduce the remaining results in $g=12$ and finally in Section \ref{sec4} we combine the various approaches to prove Theorem \ref{thm2} for genus $16$.

\section{A general principle}
\label{sec1}

Let $\mathcal{M}$ be a moduli space of curves and $\mathcal{C}$ the corresponding universal family. We will keep this vague but having in mind that $\mathcal{M}$ is the moduli space of pointed curves $\mathcal{M}_{g,n}$ or pointed hyperelliptic curves $\mathcal{H}_{g,n}$. We start by discussing a very elementary principle that allows us to lift curves covering $\mathcal{M}$ to varieties covering $\mathcal{C}$. Suppose that $\Gamma$ is a curve with a nonconstant map
\[
\begin{tikzcd}  
\Gamma \arrow[r,dashed] &\mathcal{M}.
\end{tikzcd}  
\]
We can complete this diagram naturally to a fibered square
\begin{equation}
\begin{tikzcd}
\mathcal{X}\arrow[d,dashed]\arrow[r,dashed]&\mathcal{C}\arrow[d]\\
\Gamma\arrow[r,dashed] &\mathcal{M}.
\end{tikzcd}
\end{equation}
in other words, $\mathcal{X}$ is the family induced by the curve $\Gamma$.  The obvious observation is that
\begin{itemize}
\item[$\left(\star\right)$]
if the curve $\Gamma$ passes through a general point of $\mathcal{M}$, then the surface $\mathcal{X}$ passes through a general point on $\mathcal{C}$. \label{p1}
\end{itemize}

This means that there exists a fibration $\mathcal{Y}\to B$ of generically relative dimension $\dim \mathcal{X}$, such that $\mathcal{X}$ sits in $\mathcal{Y}$ as the central fiber and $\mathcal{Y}$ dominates $\mathcal{C}$. A situation where the principle works particularly well is when $\Gamma$ is a pencil of genus $g$ curves on a smooth surface $S$. 
In this case, the variety $\mathcal{X}$ is the incidence correspondence
\begin{equation}
\mathcal{X} = \{ (C,p) \in S \times \Gamma \,\,|\,\, p \in C \}
\end{equation} 
and we see by construction that the projection map $\mathcal{X} \to S$ is birational, since it is an isomorphism outside of the base points of $\Gamma$. Hence, we see that in this case the universal family $\mathcal{C}$ is covered by surfaces birational to $S$.




As an application, we can immediately prove:

\begin{proposition}
	The moduli space of pointed hyperelliptic curves $\mathcal{H}_{g,4g+5}$ for $g\geq 2$ is covered by rational surfaces.
\end{proposition}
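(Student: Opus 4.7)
The plan is to apply the general principle $(\star)$ from the preceding discussion directly to the construction in \cite{Be}. In that paper a covering rational curve $\Gamma$ for $\mathcal{H}_{g,4g+4}$ is produced as a pencil of hyperelliptic curves on a rational surface $S$, whose $4g+4$ base points play the role of the marked points. By $(\star)$ the induced family $\mathcal{X} \to \Gamma$ is a surface dominating the universal curve $\mathcal{H}_{g,4g+5} \to \mathcal{H}_{g,4g+4}$, so it is enough to verify that $\mathcal{X}$ itself is rational.

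The key observation is standard: the incidence surface
\[
\mathcal{X} = \bigl\{(p,C) \in S \times \Gamma \,:\, p \in C\bigr\}
\]
is isomorphic to the blow-up of $S$ at the $4g+4$ base points of the pencil. Indeed, the projection $\mathcal{X} \to S$ is an isomorphism away from the base locus — through a non-base point of $\Gamma$ passes a unique member of the pencil — while it replaces each base point by the whole $\Gamma \cong \mathbb{P}^1$. Since $S$ is rational, its blow-up at finitely many smooth points is rational, and hence $\mathcal{X}$ is a rational surface.

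Applying $(\star)$, the resulting dominant rational map $\mathcal{X} \dashrightarrow \mathcal{H}_{g,4g+5}$ sending $(p,C) \mapsto (C;p_1,\ldots,p_{4g+4},p)$, where $p_1,\ldots,p_{4g+4}$ are the base points of the pencil, produces the desired covering by rational surfaces. The only things to verify are that the ambient surface $S$ in \cite{Be} is indeed rational and that the $4g+4$ marked points remain distinct on a general fiber of the pencil; both are built into the construction there, so no serious obstacle is expected --- this is precisely the \emph{simple observation} flagged in the introduction.
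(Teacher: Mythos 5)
Your proposal is correct and follows essentially the same route as the paper: both apply the general principle to Benzo's pencil $\Gamma=|L\otimes\mathcal{I}_{p_1,\dots,p_{4g+4}}|$ on $S=\mathbb{P}^1\times\mathbb{P}^1$ and identify the induced covering surface $\mathcal{X}$ with the blow-up of $S$ at the base points, hence rational. The only difference is that you spell out the blow-up identification and the base-point check slightly more explicitly than the paper does, which is harmless.
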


\begin{proof}
	We look at $\mathcal{H}_{g,4g+5}$ as the universal family over the moduli space $\mathcal{H}_{g,4g+4}$. Benzo shows in \cite[Prop. 4.1]{Be} that the latter is uniruled as follows: if $(C,p_1,\dots,p_{4g+4})$ is a general pointed hyperelliptic curve, then $C$ can be realized as a divisor on $\mathbb{P}^1\times \mathbb{P}^1$ in the linear system of $L=\mathcal{O}(2,g+1)$. Furthermore, the linear system $\Gamma=|L\otimes \mathcal{I}_{p_1,\dots,p_{4g+4}}|$ of curves in $|L|$ passing through the points $p_1,\ldots, p_{4g+4}$ defines a pencil on $\mathbb{P}^1\times \mathbb{P}^1$ and $C$ belongs to it by construction. Then the previous principle shows that $\mathcal{H}_{g,4g+5}$ is covered by surfaces birational to $\mathbb{P}^1\times \mathbb{P}^1$, and thus rational. 
\end{proof}

In the rest of the paper, we will take as $\mathcal{M}$ the moduli space $\overline{\mathcal{M}}_{g,n}$ of stable $n$-marked curves of genus $g$. The universal family over it is then naturally identified with $\overline{\mathcal{M}}_{g,n+1}$, via the map $\pi \colon \overline{\mathcal{M}}_{g,n+1} \longrightarrow \overline{\mathcal{M}}_{g,n}$ that forgets the last marked point. According to the general principle we want to lift curves covering $\overline{\mathcal{M}}_{g,n}$ to surfaces covering $\overline{\mathcal{M}}_{g,n+1}$. As before, this works particularly well in the case of \emph{good pencils}, defined as follows:
 
\begin{itemize} 
	\item [$(\star\star)$] A general pointed curve $(C,p_1,\dots,p_n) \in \overline{\mathcal{M}}_{g,n}$ lies in a smooth surface $S$ such that the linear system $\Gamma = |\mathcal{O}_S(C)\otimes \mathcal{I}_{p_1,\dots,p_n}|$ is a non-isotrivial pencil of genus $g$ curves. We assume moreover that the base locus of $\Gamma$ is reduced, meaning that two general curves in $\Gamma$ intersect transversally, and that for every curve $C'$ in $\Gamma$, the $n$-pointed curve $(C',p_1,\ldots,p_n)$ is stable. We also add the last technical condition that the induced curve $\Gamma\to\overline{\mathcal{M}}_{g,n}$ does not intersect the boundary divisor $\Delta_{1:\varnothing}$. 
\end{itemize}  

The last assumption is not strictly necessary but makes formulas simpler, see Lemma \ref{lemma1.sec1} and Proposition \ref{prop2.sec1}. See after the proof of Proposition \ref{prop3.sec1} for a reminder on the definition of boundary divisors in $\overline{\mathcal{M}}_{g,n}$.\\ 

Now we place ourselves in the situation $(\star\star)$. The incidence correspondence 
$$\mathcal{X} = \{ (C,p) \in \Gamma \times S \,|\, p\in C \}$$ 
is identified with the blow-up of $S$ at the base points of $\Gamma$, via the projection map $\mathcal{X} \longrightarrow S$, and thanks to our hypotheses in $(\star\star)$ we have a cartesian diagram
\begin{equation}\label{eq:diagrammgn}
\begin{tikzcd}
\mathcal{X}\arrow[d]\arrow[r,"f"]&\overline{\mathcal{M}}_{g,n+1}\arrow[d,"\pi"]\\
\Gamma\arrow[r] &\overline{\mathcal{M}}_{g,n}.
\end{tikzcd}
\end{equation}
The surface $\mathcal{X}$ passes through a general point of $\overline{\mathcal{M}}_{g,n+1}$ and it is birational to $S$. However, in many cases, the surface $S$ is of general type so that this does not give us information on the Kodaira dimension of $\overline{\mathcal{M}}_{g,n+1}$. However, this construction provides us with a wealth of curves covering $\overline{\mathcal{M}}_{g,n+1}$, namely all those covering $S$. In particular, any smooth curve $D \subseteq S$ which avoids the base points of $\Gamma$ can be lifted isomorphically to a curve $D'$ in $\mathcal{X}$. Furthermore, if $D$ is a covering curve in $S$ and  
\[ (D'\cdot f^*K_{\overline{\mathcal{M}}_{g,n+1}}) < 0, \] 
then $\overline{\mathcal{M}}_{g,n+1}$ is covered by curves that intersect the canonical class negatively. Hence the canonical divisor class of $\overline{\mathcal{M}}_{g,n+1}$ is not pseudoffective, so that $\overline{\mathcal{M}}_{g,n+1}$ must be uniruled thanks to \cite{BDPP}. We can also iterate this strategy: assume we are in the situation $(\star\star)$ and let $D_1,\ldots,D_k$ be a collection of smooth curves in $S$ away from the base locus of the pencil $\Gamma$, intersecting pairwise transversally and such that $(D_i\cdot C)\geq 1$ for a general element $C\in \Gamma$. Then the curves $D_i$ lift isomorphically to $\mathcal{X}$, so that we have natural maps $D_i\to\Gamma$.  Consider the base change diagram
\begin{equation}
\label{eq5.sec1}
\begin{tikzcd}
\mathcal{Y}\arrow[r]\arrow[d]&\mathcal{X}\arrow[r]\arrow[d]&S\\
\Theta:=D_1\times_{\Gamma}\cdots\times_{\Gamma}D_k\arrow[r]&\Gamma.&
\end{tikzcd}
\end{equation} 
The map $\mathcal{Y}\longrightarrow \Theta$ comes with $n+k$ many sections $s_1,\dots,s_{n+k}$, the first $n$ pulled back from $\Gamma\longrightarrow \mathcal{X}$ and the last $k$ induced by diagonals 
$$s_{n+i}\colon (p_1,\ldots,p_k)\mapsto (p_1,\ldots,p_k, p_i)\in \mathcal{Y},$$
for $i=1,\ldots,k$. After blowing up $\widetilde{\mathcal{Y}}\longrightarrow\mathcal{Y}$ at all the pairwise intersection points of the last $k$ sections, and assuming the proper transform of $D_i$ does not meet the singular points of the fibers, we obtain a family of stable pointed curves
\begin{equation}
\label{eq6.sec1}
\begin{tikzcd}
\widetilde{\mathcal{Y}}\arrow[r]\arrow[d]&\overline{\mathcal{M}}_{g,n+k+1}\arrow[d]\\
\Theta\arrow[r]\arrow[d]&\overline{\mathcal{M}}_{g,n+k}\arrow[d]\\
\Gamma\arrow[r]&\overline{\mathcal{M}}_{g,n}.
\end{tikzcd}
\end{equation}

Furthermore, if the curves $D_1,\dots,D_k$ cover $S$, then the curve $\Theta$ covers $\mbar_{g,n+k}$:
\begin{proposition}
	\label{prop3.sec1}
	Under the assumptions $(\star\star)$, let $D_1,\dots,D_k \subseteq S$ be smooth covering curves that are away from the base points of $\Gamma$, that intersect pairwise transversally and whose proper transforms in $\mathcal{X}$ do not meet the singular locus of the fibers of $\mathcal{X}\to \Gamma$. Then the numerical class of the obtained curve $\Theta\in N_1(\overline{\mathcal{M}}_{g,n+k})$ is nef.
\end{proposition}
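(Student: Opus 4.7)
The plan is to exhibit $\Theta$ as a member of an algebraic family of curves whose union is dense in $\overline{\mathcal{M}}_{g,n+k}$, i.e.\ a covering curve. Once this is done, the class of $\Theta$ lies in the closure of the movable cone, which by \cite{BDPP} is dual to the pseudoeffective cone; in particular the class intersects every pseudoeffective divisor non-negatively and is therefore nef.

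To produce the covering family I would vary the input data on both sides of the construction. Let $(C', p_1', \ldots, p_n', q_1', \ldots, q_k') \in \overline{\mathcal{M}}_{g,n+k}$ be a general point; then $(C', p_1', \ldots, p_n')$ is general in $\overline{\mathcal{M}}_{g,n}$, so condition $(\star\star)$ provides a smooth surface $S'$ and a non-isotrivial pencil $\Gamma'$ with $C' \in \Gamma'$. Because each $D_i$ is a covering curve on the original $S$, the corresponding family of curves on $S'$ still covers $S'$, so one can choose smooth members $D_i' \subseteq S'$ passing through the prescribed point $q_i' \in C' \subseteq S'$. The fiber product $\Theta' = D_1' \times_{\Gamma'} \cdots \times_{\Gamma'} D_k'$ and the construction of diagram \eqref{eq6.sec1} then yield a curve in $\overline{\mathcal{M}}_{g,n+k}$ whose image contains the chosen point, which proves that $\Theta$ deforms in a family sweeping out a dense open subset of $\overline{\mathcal{M}}_{g,n+k}$.

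The main obstacle I foresee is verifying that the genericity hypotheses of the proposition --- smoothness of the $D_i'$, pairwise transversality, avoidance of the base points of $\Gamma'$, and the requirement that the proper transforms miss the singular loci of the fibers of the induced family --- are preserved under this deformation, since exactly these conditions are what permit the construction of the map $\Theta' \to \overline{\mathcal{M}}_{g,n+k}$ via stably marked pointed curves. Each hypothesis is an open condition in the appropriate parameter space, and since the $D_i$ move in families of the expected dimension (they are covering curves on $S$), all of these conditions remain satisfied on a dense subset of deformations. Possible reducibility of $\Theta'$ is harmless: an irreducible component still dominates a neighbourhood of the chosen point, and nefness is a property of the numerical class, so this suffices to conclude.
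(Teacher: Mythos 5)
Your proposal is correct and follows essentially the same strategy as the paper: show that $\Theta$ moves in an algebraic family of constant numerical class through a general point of $\overline{\mathcal{M}}_{g,n+k}$, and conclude nefness from the standard fact about covering curves (the paper does this by iterating the lifting principle one $D_i$ at a time and cites the intersection numbers of Proposition \ref{prop2.sec1} to justify that the class is constant under all choices, whereas you vary all the data at once; the paper also avoids invoking \cite{BDPP} duality at this step, using only the elementary observation that an irreducible curve through a general point meets every effective divisor non-negatively). The one point worth making explicit in your write-up is the constancy of the numerical class across the family, which in the paper is guaranteed by the fact that all intersection numbers computed in Proposition \ref{prop2.sec1} depend only on the fixed numerical data $(D_i\cdot C)$, $(K_S\cdot D_i)$ and $(D_i\cdot D_j)$.
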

Recall the following fundamental fact: Let $X$ be a smooth projective variety and $\Xi\in N_1(X)$ a fixed curve class. Assume for a general point $p\in X$ there is an irreducible curve $C\subset X$ passing through $p$ with fixed numerical class $C\equiv \Xi$, then $\Xi$ is {\textit{nef}}, i.e., $(\Xi\cdot D)\geq0$ for all effective divisors $D\in N^1(X)$. Indeed, for any effective divisor $D$ in $X$, if we fix a general point $p\in X$ outside $D$, then since $C$ passes through $p$, $C\cdot D\geq0$. The same holds for coarse moduli spaces of smooth stacks where divisors are taken with $\qq$-coefficients.  

\begin{proof}[Proof of Proposition \ref{prop3.sec1}]
	We iterate the principle established before. By assumption $\Gamma\subset \overline{\mathcal{M}}_{g,n}$ is a covering curve and therefore the universal surface $\mathcal{X}$ is a covering surface for $\overline{\mathcal{M}}_{g,n+1}$. Since $D_1$ covers $\mathcal{X}$, the curve $D_1\to\overline{\mathcal{M}}_{g,n+1}$ is a covering curve. Again we look at the universal surface $\mathcal{X}_1$ over $D_1$, this is a finite base change $\mathcal{X}_1\to \mathcal{X}$ induced by the finite map $D_1\to \Gamma$. Since $D_2$ is also a covering curve for $\mathcal{X}$, its pullback to $\mathcal{X}_1$ is a covering curve for $\mathcal{X}_1$ and therefore for $\overline{\mathcal{M}}_{g,n+1}$. Iterating this process, since all the choices made do not change the numerical class of the resulting curve (see Proposition \ref{prop2.sec1}) the resulting curve $\Theta=D_1\times_\Gamma \ldots\times_\Gamma D_k$ is a covering curve for $\overline{\mathcal{M}}_{g,n+k}$. 
\end{proof}

In particular, if $\Theta$ intersects the canonical class of $\mbar_{g,n+k}$ negatively, then this is not pseudoeffective so that $\mbar_{g,n+k}$ is uniruled.


To pursue this strategy, we need to compute some intersection numbers. Recall \cite{ACG} that when $g\geq 3$, the group $\Pic_{\mathbb{Q}}(\overline{\mathcal{M}}_{g,n})$ is freely generated by the class $\lambda$, the $\psi$-classes $\psi_i$ for $i=1,\dots,n$ 
and the classes of the irreducible components of the boundary. One component $\Delta_{\rm{irr}}$ corresponds to irreducible nodal curves. Instead, for any $0\leq i \leq g$ and any subset $S\subset\{1,\dots,n\}$ we denote by $\Delta_{i:S}$ the component whose general point is a $1$-nodal reducible curve with two components such that one component has genus $i$ and contains precisely the markings of $S$. We observe that $\Delta_{i:S}=\Delta_{g-i:S^c}$ and we require $|S|\geq 2$ for $i=0$ and $|S|\leq n-2$ for $i=g$. One then defines the classes $\delta_{\rm{irr}} = [\Delta_{\rm{irr}}],\delta_{i:S}=[\Delta_{i:S}]$ and $\delta = \delta_{\rm{irr}}+\sum_{i,S} \delta_{i:S}$

The canonical class of $\overline{\mathcal{M}}_{g,n}$ can be computed using Grothendieck-Riemann-Roch and it is given by  the formula 
\begin{equation}
\label{canMgn}
K_{\overline{\mathcal{M}}_{g,n}}=13\cdot\lambda+\sum_{i=1}^n\psi_i -2\cdot \delta -\delta_{1:\varnothing}.
\end{equation} 
Finally we recall \cite{ACG, CR3} how to compute the intersection of these classes with a smooth test curve $\Gamma \longrightarrow \overline{\mathcal{M}}_{g,n}$. This curve corresponds to a smooth family $\mathcal{X}\longrightarrow \Gamma$ of stable genus $g$ curves together with $n$ disjoint sections $s_i:\Gamma\longrightarrow\mathcal{X}$. The intersection numbers are

\begin{align}
\label{standard}
(\Gamma\cdot\lambda)&=\chi(\mathcal{O}_\mathcal{X})-\chi(\mathcal{O}_\Gamma)\cdot(1-g), \\ (\Gamma\cdot\delta)&=\chi_{\rm{top}}(\mathcal{X})-\chi_{\rm{top}}(\Gamma)\cdot(2-2g), \\  
(\Gamma\cdot \psi_i) &= -(s_i(\Gamma)^2). 
\end{align}
Here $\chi_{\rm{top}}$ stands for the topological Euler characteristic.

Now we can compute the intersection numbers we are interested in. First we start with the situation of \eqref{eq:diagrammgn}:

\begin{lemma}\label{lemma1.sec1}
In the assumptions of $(\star\star)$, let $D\subseteq S$ be a smooth and irreducible curve which avoids the base locus of $\Gamma$ as well as the nodes of the curves in $\Gamma$. Let us denote by $D'$ its proper transform in $\mathcal{X}$ and $C$ a general fiber of $\mathcal{X}\to \Gamma$. Then
\begin{enumerate}
	\item[(i)] $(D'\cdot f^*\pi^*\alpha ) = (D\cdot C)(\Gamma \cdot \alpha)$ for every $\alpha \in \operatorname{Pic}_{\mathbb{Q}}(\overline{\mathcal{M}}_{g,n})$.
	\item[(ii)] $(D'\cdot f^*\psi_{n+1}) = (D\cdot K_S)+2(D\cdot C)$.
	\item[(iii)] $D'$ intersects trivially all the boundary divisors $\delta_{0:\{i,n+1\}}$ of $\overline{\mathcal{M}}_{g,n+1}$.
\end{enumerate}

In particular
\[ (D'\cdot f^*K_{\overline{\mathcal{M}}_{g,n+1}}) = (D\cdot C) \left(\Gamma\cdot K_{\overline{\mathcal{M}}_{g,n}}+2+\frac{(K_S\cdot D)}{(D\cdot C)}\right) \]
and if $D$ is a covering curve and this is negative, then $\mbar_{g,n+1}$ is uniruled.
\end{lemma}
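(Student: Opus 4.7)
The surface $\mathcal{X}$ is the blow-up of $S$ at the base locus of $\Gamma$, and under this identification the points $p_i$ contribute exceptional divisors $E_i$ that serve as the sections $\sigma_i\colon \Gamma\to \mathcal{X}$ of the projection $p\colon \mathcal{X}\to \Gamma$. Since $D$ avoids the base points of $\Gamma$ and the nodes of its fibres, the proper transform $D'$ is isomorphic to $D$, misses every exceptional divisor, and lies in the smooth locus of $p$. My plan is to prove (i), (iii), and (ii) in sequence and then assemble the final formula via the relative canonical bundle.

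Part (i) is a projection formula: the composition $\pi\circ f$ factors as $\mathcal{X}\xrightarrow{p}\Gamma\to\overline{\mathcal{M}}_{g,n}$, so $f^*\pi^*\alpha=p^*\beta$ for a divisor $\beta$ on $\Gamma\cong\mathbb{P}^1$ of degree $(\Gamma\cdot\alpha)$; the map $p|_{D'}$ has degree $(D\cdot C)$ because a general fibre of $p$ is numerically equivalent to the proper transform of $C$. Part (iii) is also immediate: the boundary divisor $\Delta_{0:\{i,n+1\}}$ is the image of the tautological section $s_i$ of $\pi$, so under the cartesian diagram \eqref{eq:diagrammgn} it pulls back to $\sigma_i(\Gamma)=E_i$, and $D'\cdot E_i=0$ by construction.

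The main step is (ii). Flat base change on the cartesian square gives $f^*\omega_\pi=\omega_{\mathcal{X}/\Gamma}$, and the classical identity
\[
\psi_{n+1}=\omega_\pi+\sum_{i=1}^n \Delta_{0:\{i,n+1\}}
\]
on $\overline{\mathcal{M}}_{g,n+1}$, combined with (iii), reduces the computation to $(D'\cdot\omega_{\mathcal{X}/\Gamma})$. Writing $\omega_{\mathcal{X}/\Gamma}=K_{\mathcal{X}}-p^*K_\Gamma$, and using the blow-up formula $K_{\mathcal{X}}=(\text{pullback of }K_S)+\sum(\text{exceptional})$ together with the fact that $D'$ misses the exceptional locus, one gets $(D'\cdot K_{\mathcal{X}})=(D\cdot K_S)$; combined with $\deg K_\Gamma=-2$ and $\deg p|_{D'}=(D\cdot C)$ this yields the desired $(D\cdot K_S)+2(D\cdot C)$.

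To finish, apply the relative canonical formula $K_{\overline{\mathcal{M}}_{g,n+1}}=\pi^*K_{\overline{\mathcal{M}}_{g,n}}+\omega_\pi$ and combine (i) for $\alpha=K_{\overline{\mathcal{M}}_{g,n}}$ with the computation of $(D'\cdot f^*\omega_\pi)$ to obtain the displayed identity. For uniruledness: as $(C,p_1,\dots,p_n)$ varies over $\overline{\mathcal{M}}_{g,n}$ the ambient surface $S$ and the pencil $\Gamma$ vary so that the resulting curves $D'$ cover $\overline{\mathcal{M}}_{g,n+1}$ via principle $(\star)$; if the stated quantity is negative, $K_{\overline{\mathcal{M}}_{g,n+1}}$ is not pseudoeffective and \cite{BDPP} gives the conclusion. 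The one conceptual subtlety is the boundary correction $\psi_{n+1}-\omega_\pi$, which vanishes against $D'$ precisely because $D$ is chosen away from the base locus.
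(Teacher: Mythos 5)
Your argument reaches the correct conclusions, and parts (i) and (iii) coincide with the paper's. For part (ii), however, you take a genuinely different route: the paper realizes $\mathcal{X}_{D'}$ as a smooth divisor in $D'\times\mathcal{X}$ of class $\operatorname{pr}_1^*h^*\mathcal{O}_\Gamma(1)+\operatorname{pr}_2^*\mathcal{O}_{\mathcal{X}}(C')$ and computes $-(s_{n+1}(D')^2)$ by adjunction along the diagonal section, whereas you invoke the comparison $\psi_{n+1}=\omega_\pi+\sum_i\delta_{0:\{i,n+1\}}$, base change for the relative dualizing sheaf, and the blow-up formula $K_{\mathcal{X}}=\varepsilon^*K_S+\sum E_j$. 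Both yield $(D\cdot K_S)+2(D\cdot C)$; yours is shorter and makes transparent that the discrepancy $\psi_{n+1}-\omega_\pi$ is killed exactly by the hypothesis that $D$ avoids the base locus, while the paper's is more self-contained (it does not presuppose the $\psi$--$\omega$ comparison). One imprecision to fix: the identity $K_{\overline{\mathcal{M}}_{g,n+1}}=\pi^*K_{\overline{\mathcal{M}}_{g,n}}+\omega_\pi$ holds for the moduli \emph{stacks} but not for the coarse spaces used here; because of the $\delta_{1:\varnothing}$ correction in \eqref{canMgn} one has $K_{\overline{\mathcal{M}}_{g,n+1}}=\pi^*K_{\overline{\mathcal{M}}_{g,n}}+\omega_\pi+\delta_{1:\{n+1\}}$. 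The extra term pairs to zero with $D'$ only because of the last condition in $(\star\star)$, namely that $\Gamma$ avoids $\Delta_{1:\varnothing}$ (so that $\mathcal{X}$ avoids $f^{-1}(\Delta_{1:\{n+1\}})\subseteq f^{-1}\pi^{-1}(\Delta_{1:\varnothing})$); this is precisely the term the paper tracks explicitly and discards by $(\star\star)$, so you should say so rather than assert the relative canonical formula without the correction.
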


\begin{proof}
We observe that the family $\mathcal{X}_{D'} \longrightarrow D'$ consists of at worst nodal curves, since the same holds for the family $\mathcal{X} \longrightarrow \Gamma$. Furthermore, thanks to our assumptions on $D$, the sections $s_1(D'),\dots,s_{n+1}(D')$, defined after \eqref{eq5.sec1}, are pairwise disjoint and none of them passes through the nodes of the fibers of $\mathcal{X}_{D'} \longrightarrow D'$. Hence, the family $\mathcal{X}_{D'}$ over $D'$ is already a family of stable and irreducible $(n+1)$-pointed curves. Now we compute the various intersection numbers:

	\begin{enumerate}
		\item[(i)] This follows from the projection formula together with the fact that map $\mathcal{X}\longrightarrow \Gamma$ has degree $(D\cdot C)$ when restricted to the curve $D'$.
		\item[(ii)]  We have a cartesian diagram
		\begin{equation}
		\label{eq4.sec1}
		\begin{tikzcd}
		\mathcal{X}_{D'}\arrow[r] \arrow[d]&\mathcal{X}\arrow[d]\\
		D'\arrow[r, "h"]&\Gamma
		\end{tikzcd}
		\end{equation}
	    and since $D$ avoids the base locus of $\Gamma$, as well as the nodes of the curves in $\Gamma$ it follows that $\mathcal{X}_{D} \subseteq D'\times \mathcal{X}$ is a smooth divisor of class $\operatorname{pr}_1^*h^*\mathcal{O}_{\Gamma}(1) + \operatorname{pr}_2^*\mathcal{O}_{\mathcal{X}}(C')$ where $C'\subseteq \mathcal{X}$ is the proper transform of $C$. Then the adjunction formula shows that  
		\begin{align*} 
		D'\cdot f^*\psi_{n+1} & = -(s_{n+1}(D')\cdot s_{n+1}(D')) = \deg \left(s_{n+1}^* K_{\mathcal{X}_{D'}} - K_{D'}\right) \\
		& = \deg \left( s_{n+1}^*\operatorname{pr}_1^*(K_{D'}+h^*\mathcal{O}_{\Gamma}(1))-K_{D'}\right) \\
		&+ \deg \left(s_{n+1}^*\operatorname{pr}_2^*(\mathcal{O}_{\mathcal{X}}(C')+K_{\mathcal{X}})\right) \\
		& = (C\cdot D) + (D'\cdot (C'+K_{\mathcal{X'}})) = 2(C\cdot D)+(D\cdot K_S).
		\end{align*}
		In the third equality we have used that the canonical class on $\mathcal{X}_{D'}$ can be computed by adjunction as $K_{\chi_{D'}} = \operatorname{pr}_1^*(K_{D'}+h^*\mathcal{O}_{\Gamma}(1)) + \operatorname{pr}_2^*{K_{\chi}+\mathcal{O}_{\chi}(C')}$. In the fourth equality instead, we have used that the composition $\operatorname{pr}_1\circ s_{n+1}$ is the identity, whereas $\operatorname{pr}_2\circ s_{n+1}$ is the inclusion of $D'$ inside $\mathcal{X}_{D'}$.
		
		\item[(iii)] Since $D$ does not meet the base locus of $\Gamma$, the assertion follows.
	\end{enumerate}
 Now we compute the intersection with the canonical class. We know from \eqref{canMgn} and standard formulas for pull-backs of divisor classes via the map $\pi\colon \overline{\mathcal{M}}_{g,n+1}\longrightarrow\overline{\mathcal{M}}_{g,n}$, cf. \cite[Chapter 17]{ACG}, that 
 \[ K_{\overline{\mathcal{M}}_{g,n+1}}=\pi^* K_{\overline{\mathcal{M}}_{g,n}}+\psi_{n+1}-2\sum_{i=1}^n\delta_{0:\{i,n+1\}}+\delta_{1:\{n+1\}}. \]
Hence, the previous points together with assumption $(\star\star)$ show that
 \begin{align*}
(D'\cdot f^*K_{\overline{\mathcal{M}}_{g,n+1}}) = (D\cdot C)(\Gamma\cdot K_{\overline{\mathcal{M}}_{g,n+1}}) + 2(D\cdot C) + (K_S\cdot D)
 \end{align*} 
which is what we wanted to prove.

To conclude, the assumptions show that $f_*D'$ yields a nef curve class on $\overline{\mathcal{M}}_{g,n+1}$, so that if $(f_*D'\cdot K_{\overline{\mathcal{M}}_{g,n+1}})$ is negative then $\overline{\mathcal{M}}_{g,n+1}$ is uniruled by \cite{BDPP}.
\end{proof}

Then we consider the more general situation of \eqref{eq6.sec1}:

\begin{proposition}
\label{prop2.sec1}
Under the assumptions $(\star\star)$, let $D_1,\dots,D_k \subseteq S$ be smooth covering curves that are away from the base points of $\Gamma$, intersect pairwise transversally, and whose proper transforms in $\mathcal{X}$ do not meet the singular locus of the fibers. Then the following intersection products hold for $\Theta$ in $\overline{\mathcal{M}}_{g,n+k}$:
\begin{enumerate}
	\item[(i)] $(\Theta\cdot\alpha)=\left(\prod_{i=1}^k (D_i\cdot C)\right)\cdot \left(\Gamma\cdot\alpha\right)$
	for all $\alpha\in \pi^*\Pic_{\qq}\left(\overline{\mathcal{M}}_{g,n}\right)$, where $\pi$ is the map that forgets the last $k$ marked points.
	\item[(ii)] $(\Theta\cdot\psi_{n+i})= \prod_{j\neq i}(D_j\cdot C)\cdot\left[(K_S\cdot D_i)+2(D_i\cdot C)\right]+\sum_{1\leq a<b\leq k}(D_a\cdot D_b)$ for all $i=1,\dots,k$.
	\item[(iii)] $(\Theta\cdot\delta_{0:\{n+i,n+j\}})= (D_i\cdot D_j)$ for all $1\leq i < j \leq k$, and $\Theta$ intersects trivially all the boundary components $\delta_{0:\{i, n+j\}}$ for $i=1,\ldots,n$.
\end{enumerate} 
In particular 
\begin{align}
(\Theta\cdot K_{\overline{\mathcal{M}}_{g,n+k}})&=\prod_{i=1}^k (D_i\cdot C) \cdot \left((\Gamma\cdot K_{\overline{\mathcal{M}}_{g,n}})+2k+\sum_{i=1}^k\frac{(K_S\cdot D_i)}{(D_i\cdot C)}\right)\\
&-\sum_{1\leq i<j\leq k}(D_i\cdot D_j).
\end{align}
and if this is negative, then $\overline{\mathcal{M}}_{g,n+k}$ is uniruled.
\end{proposition}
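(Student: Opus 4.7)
The plan is to prove (i)--(iii) by direct analysis of the universal family $\widetilde{\mathcal{Y}}\to\Theta$ from \eqref{eq6.sec1}, and then assemble $(\Theta\cdot K_{\mbar_{g,n+k}})$ from formula \eqref{canMgn}. The first $n$ sections of $\widetilde{\mathcal{Y}}\to\Theta$ come from pullback of the sections of $\mathcal{X}\to\Gamma$, whereas for $i=1,\dots,k$ the section $s_{n+i}$ is the proper transform of the diagonal $(q_1,\dots,q_k)\mapsto q_i$.

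First I would record that $\Theta\to\Gamma$ is finite of degree $\prod_{i=1}^k(D_i\cdot C)$, since each factor $D_i\to\Gamma$ has that degree. Part (i) then follows from the projection formula along the forgetful map $\pi\colon\mbar_{g,n+k}\to\mbar_{g,n}$: for $\alpha=\pi^*\beta$ with $\beta\in\Pic_{\qq}(\mbar_{g,n})$ we get $(\Theta\cdot\pi^*\beta)=\prod_{i=1}^k(D_i\cdot C)\cdot(\Gamma\cdot\beta)$.

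Next I would establish (iii). The blow-ups $\widetilde{\mathcal{Y}}\to\mathcal{Y}$ introduce rational bubbles carrying exactly the colliding pair of last markings, so the contribution of $\Theta$ to $\delta_{0:\{n+i,n+j\}}$ is read off from the locus of $\Theta$ where $s_{n+i}$ and $s_{n+j}$ collide in $\mathcal{Y}$. A direct count using the fiber-product structure and the transverse meeting $D_i\cap D_j\subset S$ recovers $(D_i\cdot D_j)$. Vanishing of $(\Theta\cdot\delta_{0:\{i,n+j\}})$ for $i\le n$ is immediate because each $D_j$ avoids the base locus $\{p_1,\dots,p_n\}$, so the new sections never meet the old ones.

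For (ii) I would combine adjunction with the blow-up correction. Writing $\rho\colon\mathcal{Y}\to\mathcal{X}$ for the base-change map, flat base change gives $\omega_{\mathcal{Y}/\Theta}=\rho^*\omega_{\mathcal{X}/\Gamma}$, and the composite $\rho\circ s_{n+i}$ factors through the $i$-th projection $\Theta\to D_i\subseteq\mathcal{X}$ of degree $\prod_{j\neq i}(D_j\cdot C)$. Pulling back the computation from the proof of Lemma \ref{lemma1.sec1}(ii) yields
\[\deg s_{n+i}^{*}\omega_{\mathcal{Y}/\Theta}=\prod_{j\neq i}(D_j\cdot C)\cdot\left[(K_S\cdot D_i)+2(D_i\cdot C)\right],\]
which equals $-(s_{n+i}(\Theta))^{2}_{\mathcal{Y}}$ by the same adjunction argument that identifies $\psi$-classes with negative self-intersections of sections. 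Each blow-up point on the $(n+i)$-th section then contributes $-1$ to the self-intersection of its proper transform in $\widetilde{\mathcal{Y}}$; collecting these corrections through the count in (iii) gives the displayed formula for $(\Theta\cdot\psi_{n+i})$.

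Finally, substituting (i)--(iii) into \eqref{canMgn} and collecting terms yields the displayed expression for $(\Theta\cdot K_{\mbar_{g,n+k}})$. The hard part will be the boundary bookkeeping: I must check that among the boundary divisors on $\mbar_{g,n+k}$ not pulled back from $\mbar_{g,n}$, only the components $\delta_{0:\{n+i,n+j\}}$ contribute nontrivially, and that the standing assumption $\Gamma\cap\Delta_{1:\varnothing}=\varnothing$ from $(\star\star)$ together with the genericity of the $D_i$ (pairwise transverse, disjoint from base points and fibre nodes) excludes every other boundary class. Once this is in hand, the intersection with $K_{\mbar_{g,n+k}}$ is a mechanical assembly of parts (i)--(iii).
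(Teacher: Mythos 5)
Your proposal is correct and follows essentially the same route as the paper: part (i) by the projection formula, and parts (ii)--(iii) by computing the self-intersection of the diagonal sections of $\mathcal{Y}\to\Theta$ via the adjunction computation of Lemma \ref{lemma1.sec1}(ii) (the paper phrases this reduction as a pushforward along $\pi_{n+i}$ to $\overline{\mathcal{M}}_{g,n+1}$, you as a pullback of $\omega_{\mathcal{X}/\Gamma}$ under base change, which amounts to the same thing) followed by the correction coming from blowing up the pairwise intersections of the last $k$ sections. The final formula is then assembled exactly as in the paper, by comparing $K_{\overline{\mathcal{M}}_{g,n+k}}$ with $\pi^*K_{\overline{\mathcal{M}}_{g,n}}$ and using the vanishing $(\Theta\cdot\delta_{1:S})=0$ guaranteed by $(\star\star)$, which is the boundary bookkeeping you correctly identify as the remaining point to check.
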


\begin{proof}
Projection formula gives us (i). Let $\pi_{n+i}:\overline{\mathcal{M}}_{g,n+k}\to\overline{\mathcal{M}}_{g,n+1}$ be the map that remembers the first $n$ marked points and the $(n+i)$-th one. Consider the diagram
\begin{equation}
\label{eq8.sec1}
\begin{tikzcd}
\widetilde{\mathcal{Y}}\arrow[r, "Bl"]&\mathcal{Y}\arrow[r]\arrow[d]&\mathcal{X}_{D_i}\arrow[r]\arrow[d]&\mathcal{X}\arrow[d]\\
&\Theta\arrow[r]&D_i\arrow[r]&\Gamma.
\end{tikzcd}
\end{equation} 
The push forward $\pi_{n+i,*}(\Theta)$ in $\overline{\mathcal{M}}_{g,n+1}$ is given by 
$$\left(\prod_{j\neq i}(D_j\cdot C)\right) D_i,$$
where the curve $D_i\longrightarrow \overline{\mathcal{M}}_{g,n+1}$ is induced by the middle vertical arrow in \eqref{eq8.sec1}. From Lemma \ref{lemma1.sec1} it follows that if $\Delta_i\subset \mathcal{Y}$ is the diagonal section, then
$$-(\Delta_i^2)=\prod_{j\neq i}(D_j\cdot C)\cdot\left[(K_S\cdot D_j)+2(D_j\cdot C)\right].$$ 
After blowing up $\mathcal{Y}$ at the intersection of the diagonal sections $\Delta_1,\ldots,\Delta_k$, one obtains (ii) and (iii). For the canonical class, we see that 
if $\pi:\overline{\mathcal{M}}_{g,n+k}\to\overline{\mathcal{M}}_{g,n}$ if the map that forgets the last $k$ marked points, then
\begin{align*} 
K_{\overline{\mathcal{M}}_{g,n+k}}&=\pi^\star K_{\overline{\mathcal{M}}_{g,n}}+\sum_{i=1}^k\psi_{n+i}-2\sum_{1\leq i<j\leq k}\delta_{0:\{n+i,n+j\}} \\
&+\left(\sum_{S\subset\{n+1,\ldots,n+k\}}\delta_{1:S}-2\sum_{S}\delta_{0:S}\right).
\end{align*}
where $\delta_{0:S}$ denote the remaining $\delta_0$-boundary components intersecting $\Theta$ trivially. By assumption $(\star\star)$, $(\Theta\cdot\delta_{1:S})=0$ and the formula follows.

To conclude, the assumptions show that $\Theta$ yields a nef curve class on $\overline{\mathcal{M}}_{g,n+k}$, so that if $(\Theta\cdot K_{\overline{\mathcal{M}}_{g,n+k}})$ is negative then $\overline{\mathcal{M}}_{g,n+k}$ is uniruled by \cite{BDPP}.  
\end{proof}





\section{Smoothing pencils on reducible surfaces}\label{sec2}

Now we need to find good pencils satisfying condition $(\star\star)$. The trickiest part is to check that all curves in the linear system $\Gamma$ are stable. To do so, we will construct a pencil on a reducible surface $S_1\cup S_2$ with this property, and we will show that this deforms to a pencil on a smooth surface.

More precisely, let us consider two smooth and irreducible surfaces $S_1,S_2\subseteq \mathbb{P}^r$ which meet transversely along a smooth and irreducible curve $B$. We also consider two smooth and irreducible curves $C_1\subseteq S_1, C_2\subseteq S_2$ which intersect the curve $B$ transversely. We have the intersections
\[ Z_1 = (C_1\cap B)\setminus C_2, \qquad Z_2 = (C_2\cap B)\setminus C_1, \qquad W = C_1\cap C_2  \]  
that we can also consider as divisors on $B$. We observe that the divisor $C_o=C_1\cup C_2$ on the reducible surface $S_o = S_1\cup S_2$ fails to be Cartier exactly at the points of $Z_1,Z_2$: we can fix this by considering the blow-up $\varepsilon_o \colon \widetilde{S}_o \to S_o$ along these points. This can be seen also as the blow-ups $\varepsilon_i\colon \widetilde{S}_i \to S_i$ along $Z_i$ for $i=1,2$ glued together along the proper transform of $B$, that we denote with the same letter. We see that the proper transforms of $C_1$ and $C_2$, that we denote by $\widetilde{C}_1\subseteq \widetilde{S}_1,\widetilde{C}_2\subseteq \widetilde{S}_2$, intersect $B$ precisely at the points of $W$. Hence, $\widetilde{C}_o=\widetilde{C}_1\cup \widetilde{C}_2$ is a Cartier divisor on $\widetilde{S}_o$, and the corresponding line bundle fits into an exact sequence
\[ 0 \longrightarrow \mathcal{O}_{\widetilde{S}_o}(\widetilde{C}_o)  \longrightarrow \mathcal{O}_{\widetilde{S}_1}(\widetilde{C}_1)\oplus \mathcal{O}_{\widetilde{S}_2}(\widetilde{C}_2) \longrightarrow \mathcal{O}_{B}(W) \longrightarrow 0 \]  
where the last map is $(\widetilde{\sigma}_1,\widetilde{\sigma}_2)\mapsto \widetilde{\sigma}_{1|B}-\widetilde{\sigma}_{2|B}$. Pushing forward along $\varepsilon_o$ we obtain another exact sequence
\begin{equation}\label{eq:pushforwardF} 
0 \longrightarrow \varepsilon_{o,*}\mathcal{O}_{\widetilde{S}_o}(\widetilde{C}_o)  \longrightarrow (\mathcal{O}_{S_1}(C_1)\otimes \mathcal{I}_{Z_1,S_1})\oplus(\mathcal{O}_{S_2}(C_2)\otimes \mathcal{I}_{Z_2,S_2}) \longrightarrow \mathcal{O}_{B}(W) \longrightarrow 0 
\end{equation}  
where the map on the right is the difference of the canonical restriction maps $\mathcal{O}_{S_i}(C_i)\otimes \mathcal{I}_{Z_i} \longrightarrow \mathcal{O}_B(C_i-Z_i)\cong \mathcal{O}_B(W)$. In particular this shows that the sequence \eqref{eq:pushforwardF} is right exact. Then, each section in $H^0(S_o,\varepsilon_{o,*}\mathcal{O}_{\widetilde{S}_o}(\widetilde{C}_o))$ which is nonzero on both $S_1,S_2$ corresponds to an union of curves $C_i'\subseteq S_i$ such that $C_i'\cap B = Z_i\cup W'$ where $W'=C_1'\cap C_2'$ is a divisor in $|\mathcal{O}_B(W)|$.
\vskip.3in

Now the idea is that a pencil of reducible curves in $H^0(S_o,\varepsilon_*\mathcal{O}_{\widetilde{S}_o}(\widetilde{C}_o))$ can be smoothed out to a pencil on a smooth surface.

\begin{proposition}\label{prop:smoothing}
	In the above notation, let $\mathcal{C}\subseteq \mathcal{S}$ be two flat families over a small pointed disk $(\Delta,o)$  such that
	\begin{enumerate}
		\item[(a)] $\mathcal{S}_o \cong S_o$ and $\mathcal{C}_o \cong C_o$.
		\item[(b)] $\mathcal{S}_t,\mathcal{C}_t$ are smooth for $t\ne o$.
		\item[(c)] $\mathcal{C}$ is smooth.
	\end{enumerate}
    Let $\varepsilon\colon \widetilde{\mathcal{S}} \to \mathcal{S}$ be the blow up along $Z_1\cup Z_2$ and let $\widetilde{\mathcal{C}}$ be the proper transform of $\mathcal{C}$. Then $\widetilde{\mathcal{C}}$ is a Cartier divisor on $\widetilde{\mathcal{S}}$ and the restriction of the sheaf $\varepsilon_*\mathcal{O}_{\widetilde{\mathcal{S}}}(\widetilde{\mathcal{C}})$ to the fibers of $\mathcal{S} \longrightarrow \Delta$ is given by
    \[ \varepsilon_*\mathcal{O}_{\widetilde{\mathcal{S}}}(\widetilde{\mathcal{C}})_{|\mathcal{S}_o} \cong \varepsilon_{o,*}\mathcal{O}_{\widetilde{S}_o}(\widetilde{C}_o), \qquad  \varepsilon_*\mathcal{O}_{\widetilde{\mathcal{S}}}(\widetilde{\mathcal{C}})_{|\mathcal{S}_t} \cong \mathcal{O}_{\mathcal{S}_t}(\mathcal{C}_t). \]
    In particular, if $h^0(S_o,\varepsilon_{o,*}\mathcal{O}_{\widetilde{S}_o}(\widetilde{C}_o)) = h^0(S_t,\mathcal{O}_{\mathcal{S}_t}(\mathcal{C}_t))$ for every $t\in \Delta$, then sections of $\varepsilon_{o,*}\mathcal{O}_{\widetilde{S}_o}(\widetilde{C}_o)$ deform to sections of $\mathcal{O}_{\mathcal{S}_t}(\mathcal{C}_t)$ on nearby fibers.
\end{proposition}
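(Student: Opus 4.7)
The plan is to establish the three parts of the statement in turn: (1) Cartier-ness of $\widetilde{\mathcal{C}}$; (2) identification of the restrictions of $\varepsilon_*\mathcal{O}_{\widetilde{\mathcal{S}}}(\widetilde{\mathcal{C}})$ on the fibers of $\mathcal{S}\to\Delta$; (3) the deformation of sections.

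For (1), the question is local at each $p\in Z_1\cup Z_2$, since away from these points $\mathcal{C}$ is already a Cartier divisor in $\mathcal{S}$ (being a smooth divisor in a scheme that is smooth away from a finite set of points on $B$). Using the smoothness of $\mathcal{C}$, the reducedness of $\mathcal{C}_o$ near $p$ (where it equals the smooth branch $C_1$), and the two-sheet structure of $\mathcal{S}_o$ along $B$, I would pin down the local structure of the pair $\mathcal{C}\subset\mathcal{S}$ near $p$. In the expected generic situation $\mathcal{S}$ has an ordinary 3-fold node $\{xy = zt\}$ at $p$, $\mathcal{C}$ is locally the Weil divisor $\{z = x,\, y = t\}$ through the node, and a direct chart computation shows that after blowing up $p$ the proper transform $\widetilde{\mathcal{C}}$ is cut out by a single equation (such as $Z - 1$ in the $X$-chart), hence is Cartier. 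The analysis at points of $Z_2$ is symmetric.

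For (2), when $t\ne o$ the blow-up $\varepsilon$ is an isomorphism over $\mathcal{S}_t$, so $\varepsilon_*\mathcal{O}_{\widetilde{\mathcal{S}}}(\widetilde{\mathcal{C}})|_{\mathcal{S}_t}\cong \mathcal{O}_{\mathcal{S}_t}(\mathcal{C}_t)$ is immediate. For $t=o$, I would identify the scheme-theoretic base change $\widetilde{\mathcal{S}}\times_{\mathcal{S}}\mathcal{S}_o$ with $\widetilde{S}_o$, the blow-up of $S_o$ at $Z_1\cup Z_2$, and check that $\widetilde{\mathcal{C}}\times_{\mathcal{S}}\mathcal{S}_o = \widetilde{C}_o$ as subschemes. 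Combined with the vanishing of $R^1\varepsilon_*\mathcal{O}_{\widetilde{\mathcal{S}}}(\widetilde{\mathcal{C}})$, which reduces to a standard cohomology computation on the $2$-dimensional exceptional fibers of $\varepsilon$ (a quadric $\mathbb{P}^1\times\mathbb{P}^1$ in the ordinary node case, on which the restriction of $\widetilde{\mathcal{C}}$ can be checked to have vanishing $H^1$), cohomology and base change yield the identification $\varepsilon_*\mathcal{O}_{\widetilde{\mathcal{S}}}(\widetilde{\mathcal{C}})|_{\mathcal{S}_o}\cong \varepsilon_{o,*}\mathcal{O}_{\widetilde{S}_o}(\widetilde{C}_o)$.

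For (3), let $\pi\colon \mathcal{S}\to\Delta$ denote the projection. The coherent sheaf $\pi_*\varepsilon_*\mathcal{O}_{\widetilde{\mathcal{S}}}(\widetilde{\mathcal{C}})$ on $\Delta$ has fiber at $t$ equal to $H^0$ of the restriction of $\varepsilon_*\mathcal{O}_{\widetilde{\mathcal{S}}}(\widetilde{\mathcal{C}})$ to $\mathcal{S}_t$, by the identification in (2) and another application of base change. If the hypothesis of constant $h^0$ holds, Grauert's theorem implies this sheaf is locally free on $\Delta$, so any section at $o$ extends to a section over a neighborhood, whose specializations to nearby $\mathcal{S}_t$ give the claimed sections of $\mathcal{O}_{\mathcal{S}_t}(\mathcal{C}_t)$. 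The principal obstacle is part (1), the local Cartier-ness check, as it requires pinning down the local structure of the non-Cartier pair $\mathcal{C}\subset\mathcal{S}$ at its degenerate points; the remaining steps are standard applications of base change and Grauert.
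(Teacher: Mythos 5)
Your overall architecture matches the paper's: reduce to a local analysis at the points of $Z_1\cup Z_2$, show the blow-up $\widetilde{\mathcal{S}}$ is smooth so that $\widetilde{\mathcal{C}}$ is automatically Cartier, identify the restrictions by base change, and finish with Grauert. The local model you guess (an ordinary threefold node $\{xy=zt\}$ with $\mathcal{C}$ a non-Cartier plane through it) is exactly the paper's. But two steps you treat as routine are in fact where the content lies, and one of them as written is false.

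First, the identification $\widetilde{\mathcal{S}}\times_{\mathcal{S}}\mathcal{S}_o\cong\widetilde{S}_o$ is incorrect. Since $t$ lies in the maximal ideal of each blown-up point, the scheme-theoretic central fiber of $\widetilde{\mathcal{S}}\to\Delta$ is the normal crossing surface $\widetilde{S}_o\cup E$, where $E\cong\mathbb{P}^1\times\mathbb{P}^1$ is the exceptional quadric over each node — you even use these two-dimensional exceptional fibers for your $R^1$ vanishing, which is inconsistent with the claimed identification. Consequently base change (which here requires either Tor-independence of $S_o\hookrightarrow\mathcal{S}$ and $\widetilde{\mathcal{S}}\to\mathcal{S}$, or the vanishing $R^1\varepsilon_*\mathcal{L}=R^2\varepsilon_*\mathcal{L}=0$ that the paper establishes via rationality of the node and triviality of $\mathcal{O}_{\widetilde{\mathcal{C}}}(\widetilde{\mathcal{C}})$ near the node) only yields $\varepsilon_*\mathcal{L}|_{\mathcal{S}_o}\cong\mu_*\bigl(\mathcal{L}|_{\widetilde{S}_o\cup E}\bigr)$. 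One still must compare this with $\varepsilon_{o,*}\mathcal{O}_{\widetilde{S}_o}(\widetilde{C}_o)$; the paper does so via the Mayer–Vietoris sequence $0\to i^*\mathcal{L}\to\mathcal{O}_{\widetilde{S}_o}(\widetilde{C}_o)\oplus\mathcal{O}_E(C_E)\to\mathcal{O}_{E_1\cup E_2}(C_E)\to 0$ together with the fact that restriction $H^0(E,\mathcal{O}_E(C_E))\to H^0(E_1\cup E_2,\mathcal{O}_{E_1\cup E_2}(C_E))$ is an isomorphism (here $C_E$ is a line in the same ruling as $E_2$). This step is substantive and missing from your proposal. Second, you only assert the ODP local model as ``the expected generic situation''; the hypotheses (a)--(c) must be shown to force it. The paper's Lemma on local equations does this via Friedman's deformation theory: the deformation of $S_o$ is encoded by $f\in H^0(B,\mathcal{N}_{B/S_1}\otimes\mathcal{N}_{B/S_2})$, compatibility with the embedded deformation of $C_o$ forces $f$ to vanish on $Z_1\cup Z_2$, smoothness of $\mathcal{C}$ forces $f\ne 0$ on $W$, and smoothness of $\mathcal{S}_t$ for $t\ne o$ forces the zeros to be simple. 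You name the right inputs but do not supply this argument, and without it the entire chart computation has no foundation.
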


To prove this, we need information on the local shape of $\mathcal{S}$ and $\mathcal{C}$ around the blown-up points. This is explained in the following 

\begin{lemma}\label{lemma:localequation}
	In the hypotheses of Proposition \ref{prop:smoothing}, the threefold $\mathcal{S}$ is smooth away from the points of $Z_1,Z_2$, where it has local equation
	\[ \{xy-tz=0\}\subset \cc_{x,y,z}\times \Delta. \]
	where $S_1 = \{ x=0 \}$ and $S_2 = \{ y=0 \}$.
	Moreover, $\mathcal{C}$ has local equations $\{ x = z =0 \}$ and $\{ y =z = 0 \}$ around the points of $Z_1$ and $Z_2$ respectively.
\end{lemma}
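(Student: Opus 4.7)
The argument is an entirely local analytic computation near each point of the central fiber $\mathcal{S}_o$. Off the double curve $B$, the central fiber $\mathcal{S}_o = S_1 \cup S_2$ is smooth, so by flatness of $\mathcal{S} \to \Delta$ the total space $\mathcal{S}$ is smooth there. So we fix a point $p \in B$ and choose analytic coordinates $x,y,z$ on the ambient $\pp^r$ near $p$ such that $S_1 = \{x = 0\}$, $S_2 = \{y = 0\}$ and $B = \{x = y = 0\}$. Then $\mathcal{S} \subset \cc^3 \times \Delta$ is locally cut out by one equation $F(x,y,z,t) = 0$, and flatness together with $\mathcal{S}_o \cong S_o$ forces, after absorbing a unit, the normal form $F = xy + tG(x,y,z,t)$ for some analytic $G$. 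A direct Jacobian computation shows that $\mathcal{S}$ is smooth at $(0,0,z_0,0)$ precisely when $G(0,0,z_0,0) \neq 0$.

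At a point $p \in Z_1$ I refine $z$ so that $C_1 = \{x = z = 0\}$ locally on $S_1$ (using transversality of $C_1$ with $B$ in $S_1$). Since $p \notin C_2$, the central fiber $\mathcal{C}_o$ coincides with $C_1$ near $p$. Smoothness of the $2$-fold $\mathcal{C}$, together with the shape of its central fiber, lets me present $\mathcal{C}$ as a local complete intersection $\{x + t\alpha = 0,\ z + t\beta = 0\}$ for analytic $\alpha,\beta$; absorbing $\alpha,\beta$ into $x$ and $z$ (valid coordinate changes, as both Jacobians are the identity at $t=0$) gives $\mathcal{C} = \{x = z = 0\}$. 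The inclusion $\mathcal{C} \subset \mathcal{S}$ now forces $G$ to vanish identically on $\{x = z = 0\}$, and so $G = x g_1 + z g_2$ for some analytic $g_1,g_2$. Substituting, $F = x(y + tg_1) + tz g_2$; the further change $y' = y + tg_1$ (invertible since its Jacobian in $y$ is $1$ at $t=0$) brings $F$ to the form $xy' + tz\tilde g_2(x,y',z,t)$.

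The crucial technical step is to verify $\tilde g_2(0) \neq 0$. Otherwise, the gradient of $F$ together with the equation $F = 0$ would cut out a non-isolated singular locus that persists in the nearby fibers $\mathcal{S}_t$, contradicting the hypothesis that $\mathcal{S}_t$ is smooth for $t \neq o$. Granted that $\tilde g_2$ is a unit at the origin, the additional change $z' = z \tilde g_2$ puts $F$ in the desired form $xy' + tz' = 0$, while $\mathcal{C}$ remains $\{x = 0,\ z' = 0\}$ (since $z \tilde g_2 = 0 \Leftrightarrow z = 0$ where $\tilde g_2$ is a unit). The case $p \in Z_2$ is entirely symmetric under the exchange of $x$ and $y$. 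Finally, at points $p \in B \setminus (Z_1 \cup Z_2)$, no constraint is imposed on $G$ by $\mathcal{C}$ at $p$, and smoothness of $\mathcal{S}_t$ for $t\neq o$ combined with the absence of forcing conditions from $\mathcal{C}$ gives $G(0,0,z_0,0) \neq 0$, so $\mathcal{S}$ is smooth at such points.

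The main obstacle is the verification that $\tilde g_2$ is a unit at the origin. This is exactly what pins the singularity of $\mathcal{S}$ at each point of $Z_1 \cup Z_2$ down to an ordinary double point rather than something worse, and is the step where the smoothness of the nearby fibers $\mathcal{S}_t$ is genuinely used.
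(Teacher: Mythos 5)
Your route is genuinely different from the paper's: you work entirely with explicit local equations and ideal membership, whereas the paper derives the normal form from Friedman-style deformation theory, identifying the first-order deformation of $S_o$ with a section $f\in H^0(B,\mathcal{N}_{B/S_1}\otimes\mathcal{N}_{B/S_2})$ and using the map $\mathcal{T}^1_{C_o}\to\mathcal{T}^1_{S_o}\otimes\mathcal{O}_{C_o}$ (i.e.\ $\mathcal{O}_W\hookrightarrow\mathcal{O}_{C_o\cap B}$) to force $f$ to vanish on $Z_1\cup Z_2$ and to be nonzero on $W$. Your replacement of that step — $\mathcal{C}\subset\mathcal{S}$ gives $tG\in(x,z)$, hence $G=xg_1+zg_2$ since $(x,z)$ is prime and does not contain $t$ — is correct and arguably cleaner; note that in your normal form $F=xy'+tz\tilde g_2$ one has $f(z)=z\,\tilde g_2(0,0,z,0)$, so your condition $\tilde g_2(0)\neq 0$ is literally the paper's condition that $f$ have a simple zero.

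There is, however, a gap at exactly the step you flag as crucial. The justification ``otherwise the singular locus of $\mathcal{S}$ would be non-isolated and persist in the nearby fibers'' is false as stated: for $F=xy+t^2z$ (i.e.\ $\tilde g_2=t$) the singular locus of $\mathcal{S}$ is the whole curve $\{x=y=t=0\}=B$, which is non-isolated but entirely contained in the central fiber, and every fiber $\mathcal{S}_t$, $t\neq 0$, is smooth (since $F_z=t^2\neq 0$); yet $xy+t^2z$ is not an ordinary double point. What must be excluded is $\tilde g_2$ vanishing at the origin with the smoothing occurring only at higher order in $t$, and smoothness of the nearby fibers alone does not do this locally. (To be fair, the paper's own proof asserts that a higher-order zero of $f$ would make $S_t=\{xy=tf(z)\}$ singular, which equally ignores the $O(t^2)$ corrections; so you have reached parity with the paper, but you have not closed this point, and you should not present the ``persisting singular locus'' as a proof.) Separately, your last sentence about points of $B\setminus(Z_1\cup Z_2)$ is too quick: at the points of $W$ it is the smoothness of the total space $\mathcal{C}$ (hypothesis (c)) that forces $G\neq 0$ — locally $\mathcal{C}=\{xy=tG,\ z=0\}$ there, which is smooth only if $G$ is a unit — and ``absence of forcing conditions from $\mathcal{C}$'' is not an argument for $G(0,0,z_0,0)\neq 0$ at the remaining points of $B$.
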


\begin{proof}
Recall the following deformation theoretic sheaves and vector spaces (see \cite{Fr, Ha}):
\[ \mathcal{T}_{S_o}^i=\mathcal{E}xt^i_{\mathcal{O}_{S_o}}(\Omega_{S_o}^1,\mathcal{O}_{S_o}), \hspace{1cm}\hbox{and}\hspace{1cm}\mathbb{T}^i_{S_o}={\rm{Ext}}^i_{\mathcal{O}_{S_o}}(\Omega_{S_o}^1,\mathcal{O}_{S_o}). \]  
The sheaf $\mathcal{T}^1_{S_o}$ parameterizes local first order deformations and $\mathcal{T}^2_{S_o}$ local obstructions. The vector space $\mathbb{T}_{S_o}^1$ parameterizes global first order deformations and $\mathbb{T}_{S_o}^2$ global obstructions. Recall that complete intersections are unobstructed so first order deformations can be lifted to a flat family over a small pointed disc $(\Delta, o)$, cf. \cite[Tag 0DZG]{S-P}. The local-to-global spectral sequence induces an exact sequence
\begin{equation}
\label{eq:loc-to-glob}
0\to H^1(T_{S_o})\to \mathbb{T}_{S_o}^1\to H^0(\mathcal{T}_{S_o}^1)\to H^2(T_{S_o}),
\end{equation}
where $T_{S_o}=\mathcal{T}^0$ is the tangent sheaf of $S_o$, and $H^1(T_{S_o})$ parameterizes locally trivial first order deformations. The sheaf $\mathcal{T}_{S_o}^1$ is supported on $B$ and isomorphic to the line bundle $\mathcal{N}_{B/S_1}\otimes \mathcal{N}_{B/S_2}$. The family $\mathcal{S}\to \Delta$ induces an element in $\mathbb{T}_{S_o}^1$ with image under the map in \eqref{eq:loc-to-glob} given by $f\in H^0\left(B, \mathcal{N}_{B/S_1}\otimes \mathcal{N}_{B/S_2}\right)$. The local equation of $\mathcal{S}$ around the central fiber is given by 
$$\left\{xy=tf(z)\right\}\subset \mathbb{C}_{x,y,z}\times\Delta.$$
The $3$-fold $\mathcal{S}$ is singular at the zeroes of $f$. On the other hand 
$$\mathcal{T}_{C_o}^1\cong\mathcal{O}_{(C_{o})_{sing}}\cong\mathcal{O}_W\hspace{.5cm}\hbox{and}\hspace{.5cm}\mathcal{T}_{S_o}^1\otimes\mathcal{O}_{C_o}\cong \mathcal{O}_{C_o\cap B}.$$
The restriction of the standard normal bundle sequence of $S_o$ to $C_o$ induces the map 
$$\mathcal{T}_{C_o}^1\to\mathcal{T}_{S_o}^1\otimes\mathcal{O}_{C_o}$$
given by the natural inclusion $\mathcal{O}_W\to \mathcal{O}_{C_o\cap B}$, cf. \cite[Thm. 3.11]{BV}. The family $\mathcal{C}\to \Delta$ is induced by an element in $\mathbb{T}_{C_o}^1$, whose image under the composition
\begin{equation}
\label{eq:def1}
\mathbb{T}_{C_o}^1\to H^0(\mathcal{T}_{C_o}^1)\to H^0(\mathcal{T}_{S_o}^1\otimes\mathcal{O}_{C_o})
\end{equation}
coincides with the image of $f\in H^0(\mathcal{T}_{S_o}^1)$ under the restriction map. The family $\mathcal{C}$ in $\mathcal{S}$ has local equations given by $\left\{xy=tf(z), z=0\right\}$ around the points $W$, and by $\left\{xy=tf(z), z=0, x=0\right\}$ and $\left\{xy=tf(z), z=0, y=0\right\}$ around the points of $Z_1$ and  $Z_2$ respectively. 

Note that the smoothness assumption on $\mathcal{C}$ forces $f$ to be nonzero at $W$, whereas the condition that the image of $f$ in $H^0(\mathcal{T}_{S_o}^1\otimes\mathcal{O}_{C_o})$ must lie in the image of \eqref{eq:def1} forces $f$ to vanish at $Z_1,Z_2$. This makes the total family $\mathcal{S}$ singular at the points $Z_i$. This does not come as a surprise, since the smooth divisor $\mathcal{C}$ restricted to the central fiber of $\mathcal{S}$ fails to be Cartier. 

To conclude we observe that $f$ has simple zeroes at the points of $Z_1$ and $Z_2$, otherwise the surface $S_t = \{ xy - f(z)t = 0 \}$ would not be smooth for $t\ne 0$. 
\end{proof} 

With this we can prove Proposition \ref{prop:smoothing}.

\begin{proof}[Proof of Proposition \ref{prop:smoothing}]:  
Since the statement is essentially local, we can restrict to a small neighborhood of $\mathcal{S}$ around a point $p\in Z_1$, but we keep the same notation. Thanks to the local description of Lemma \ref{lemma:localequation} we can compute that the blown-up threefold $\varepsilon\colon \widetilde{\mathcal{S}} \to \mathcal{S}$ is smooth, so that $\widetilde{\mathcal{C}}$ is a Cartier divisor on it.  The exceptional divisor of the blow-up is the smooth quadric surface $E\cong \mathbb{P}^1 \times \mathbb{P}^1$ and the central fiber of $\widetilde{\mathcal{S}} \to \Delta$ consists of the normal crossing surface $\widetilde{S}_o \cup E = \widetilde{S}_1\cup \widetilde{S}_2 \cup E$. We set $E_i = \widetilde{S}_i\cap E$: inside the smooth quadric $E$ these correspond to two intersecting lines. Instead $C_E := \widetilde{C}\cap E$ is another line, in the same ruling as $E_2$.   

Now we fix the line bundle $\mathcal{L}:=\mathcal{O}_{\widetilde{\mathcal{S}}}\left(\widetilde{\mathcal{C}}\right)$. It is clear that $\varepsilon_*\mathcal{L}_{|\mathcal{S}_t} \cong \mathcal{O}_{\mathcal{S}_t}(\mathcal{C}_t)$ for $t\ne 0$. We need to prove that $\varepsilon_*\mathcal{L}_{|\mathcal{S}_o} \cong \varepsilon_{o,*}\mathcal{O}_{\widetilde{\mathcal{S}}_o}(\widetilde{\mathcal{C}}_o)$.
To do so,  consider the diagram
\begin{equation}
\label{eq:sq.diag}
\begin{tikzcd}
\widetilde{S}_o\arrow[hook, r]\arrow[dr, "\varepsilon_o"']&\widetilde{S}_o\cup E\arrow[r, "i"]\arrow[d, "\mu"]&\widetilde{\mathcal{S}}\arrow[d, "\varepsilon"]&\widetilde{\mathcal{C}}\arrow[d, "\delta"]\arrow[hook', l, "v"']\\
&S_o\arrow[r, "j"]&\mathcal{S}&\mathcal{C}\arrow[hook', l, "u"'].
\end{tikzcd}
\end{equation}
First we are going to show that
\begin{equation}\label{eq:derivediso}
 j^*\varepsilon_* \mathcal{L} \cong \mu_* i^* \mathcal{L}
\end{equation}  
We observe that the pair of morphisms $S_1\cup S_2\to \mathcal{S}$, and $\widetilde{\mathcal{S}}\to \mathcal{S}$ are $\rm{Tor}$-independent, therefore we have an isomorphism in the derived category $Lj^*R\varepsilon_*\mathcal{L} \cong R\mu_* Li^*\mathcal{L}$
cf. \cite[Thm. 3.10.3]{Li}. We see that $Li^*\mathcal{L}\cong i^*\mathcal{L}$ since $\mathcal{L}$ is locally free. Then, to obtain \eqref{eq:derivediso}, it is enough to show that 
\begin{equation}\label{eq:derivediso2}
R^1\varepsilon_*\mathcal{L} = R^2\varepsilon_*\mathcal{L} = 0.
\end{equation} 
Indeed, in this case we have an isomorphism $Lj^*\mathcal{L} \cong R\mu_*\mathcal{L}$ in the derived category, and taking cohomology in degree zero we get exactly \eqref{eq:derivediso}.
To prove \eqref{eq:derivediso2}, consider the exact sequence
$$0\to \mathcal{O}_{\widetilde{\mathcal{S}}}\to\mathcal{L}\to v_* \mathcal{O}_{\widetilde{\mathcal{C}}}(\widetilde{\mathcal{C}})\to 0$$
We know that $R^1\varepsilon_*\mathcal{O}_{\widetilde{\mathcal{S}}} = R^2\varepsilon_*\mathcal{O}_{\widetilde{\mathcal{S}}} = 0$ since it is the blow up of a rational singularity. From the commutativity of the right square in \eqref{eq:sq.diag} one concludes that
$$R^i\eps_*\mathcal{L}\cong R^i\eps_*\left(v_*\mathcal{O}_{\widetilde{\mathcal{C}}}\left(\widetilde{\mathcal{C}}\right)\right)\cong u_*\left( R^i\delta_* \mathcal{O}_{\widetilde{\mathcal{C}}}\left(\widetilde{\mathcal{C}}\right)\right)\hspace{.5cm}\hbox{for}\hspace{.5cm}i=1,2.$$
Since we restricted ourselves to a small neighborhood of $p\in Z_1$ in $\mathcal{S}$, we can assume that both $\omega_\mathcal{C}$ and $u^*\omega_{S}$ are trivial. Then by adjunction we get
$$\mathcal{O}_{\widetilde{\mathcal{C}}}(\,\widetilde{\mathcal{C}}\,)\cong \mathcal{O}_{\widetilde{\mathcal{C}}}\left(C_E-v^* E\right)\cong\mathcal{O}_{\widetilde{\mathcal{C}}}$$ 
and since $\delta$ is the blow up of a smooth surface at a smooth point we see that $R^i\delta_{*}\mathcal{O}_{\widetilde{\mathcal{C}}}\left(\widetilde{\mathcal{C}}\right)=0$ for $i=1,2$. This shows that \eqref{eq:derivediso2} and consequently \eqref{eq:derivediso} hold.

Now we need to show that $\mu_*i^*\mathcal{L} \cong \varepsilon_{o,*}\mathcal{O}_{\widetilde{S}_o}(\widetilde{C}_o)$.  We see that $\widetilde{\mathcal{C}}$ and the central fiber $\widetilde{S}_o\cup E$ meet transversely, so that $i^*\mathcal{L}$ fits into the exact sequence
$$0\to i^*\mathcal{L}\to \mathcal{O}_{\widetilde{S}_o}(\widetilde{C}_o)\oplus\mathcal{O}_{E}(C_E)\to \mathcal{O}_{E_1\cup E_2}(C_E)\to 0.$$
pushing forward along  $\mu$ we get the sequence:
$$0\to \mu_*i^*\mathcal{L}\to \varepsilon_{o,*}\mathcal{O}_{\widetilde{S}_o}(\widetilde{C}_o)\oplus \left( H^0(E,\mathcal{O}_{E}(C_E))\otimes \mathcal{O}_p \right) \to H^0(E_1\cup E_2,\mathcal{O}_{E_1\cup E_2}(C_E))\otimes \mathcal{O}_p$$
and since $C_E$ is a line on $E$ and $E_1\cup E_2$ is the union of two lines in different rulings, it is straightforward to check that the restriction map
\[ H^0(E,\mathcal{O}_{E}(C_E)) \to H^0(E_1\cup E_2,\mathcal{O}_{E_1\cup E_2}(C_E)) \]
is an isomorphism, so that the last sequence shows that $\mu_*i^*\mathcal{L} \cong  \varepsilon_{o,*}\mathcal{O}_{\widetilde{S}_o}(\widetilde{C}_o)$.

The last part of Proposition \ref{prop:smoothing} follows from Grauert's theorem, since the sheaf $\varepsilon_*\mathcal{O}_{\widetilde{\mathcal{S}}}(\widetilde{\mathcal{C}})$ is torsion-free, hence flat over $\Delta$. 
\end{proof}

Now we can apply Proposition \ref{prop:smoothing} to obtain good pencils in genus 13, 14 and 15.

\section{Pointed curves in genus $13,14$ and $15$}\label{sec3}
\subsection{Genus $13$}

We recall the construction of a good pencil satisfying condition $(\star\star)$ for $\overline{\mathcal{M}}_{13,3}$. We follow the construction of \cite[Proof of Proposition 3.7]{Be}. Let $[C,p_1,p_2,p_3]\in \mathcal{M}_{13,3}$ be a general curve. Then 
\[ \dim W^{2}_{11}(C) = \rho(13,2,11) = 1 \]
so that there exists a line bundle $L$ which is a complete $\mathfrak{g}^2_{11}$ that does not separate the points $p_1,p_2,p_3$. Furthermore, when such $(C,p_1,p_2,p_3)$ and $L$ are general, the residual $K_C-L$ is a very ample $\mathfrak{g}^3_{13}$ that embeds $C$ as a curve in $\mathbb{P}^3$ such that $\dim |\mathcal{I}_C(5)|=2$. Let $S\in|\mathcal{I}_C(5)|$ be a general quintic surface containing $C$. The surface $S$ is \textit{canonical}, i.e., $K_S = \mathcal{O}_S(H)$, where $H$ is the hyperplane class of $\mathbb{P}^3$. Then  $\mathcal{O}_C(C) = L$ so that $h^0(S,\mathcal{O}_S(C)) = 4$. Moreover, since $L$ does not separate the points $p_1,p_2,p_3$ we see that
\[ \Gamma = |\mathcal{O}_S(C)\otimes \mathcal{I}_{p_1,p_2,p_3}| \]
is a non-isotrivial pencil. Now we need to prove that $\Gamma$ satisfies condition $(\star\star)$.  
\begin{lemma}\label{lemma:genus13pencil} 
	For a general choice of the curve $(C,p_1,p_2,p_3)$, the linear series $L \in W^2_{11}(C)$, and quintic $S$ as before, the pencil $\Gamma$ satisfies condition $(\star\star)$. 
\end{lemma}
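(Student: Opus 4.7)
The plan is to verify the four ingredients making up condition $(\star\star)$: that $\Gamma$ is a non-isotrivial pencil of genus $13$ curves, that its base locus is reduced, that every member $(C',p_1,p_2,p_3)$ with $C'\in\Gamma$ is stable, and that the induced moduli map $\Gamma\to\overline{\mathcal{M}}_{13,3}$ avoids $\Delta_{1:\varnothing}$. The first two ingredients are open genericity conditions. Non-isotriviality follows from a dimension count: if $\Gamma$ were isotrivial for a general quintic $S\in|\mathcal{I}_C(5)|$, then as $(C,p_1,p_2,p_3)$ ranged over a dense open subset of $\mathcal{M}_{13,3}$ and $L$ varied in the positive-dimensional $W^2_{11}(C)$, the moduli of curves swept out by these pencils would be constant, contradicting that the construction is dominant onto $\mathcal{M}_{13,3}$. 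Reducedness of the base locus follows because two general members meet transversely in $\deg L = 11$ points, the three of which are $p_1,p_2,p_3$.

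For the remaining stability and $\Delta_{1:\varnothing}$-avoidance claims, the strategy is to specialize: I would exhibit a normal-crossings degeneration $(\mathcal{S},\mathcal{C})\to(\Delta,o)$ of the pair $(S,C)$ whose central fiber $S_o=S_1\cup S_2$ is a union of smooth rational canonical surfaces meeting transversely along a smooth curve $B$, with $C_o=C_1\cup C_2$ and $C_i\subset S_i$, chosen so that $p_1,p_2,p_3$ lie on $C_1$ away from $B$ and from the base points. By Proposition \ref{prop:smoothing}, sections of $\mathcal{O}_{\mathcal{S}_t}(\mathcal{C}_t)$ on nearby smooth fibers are limits of sections of $\varepsilon_{o,*}\mathcal{O}_{\widetilde{S}_o}(\widetilde{C}_o)$ on $S_o$, so the pencil $\Gamma_t$ on a general smooth $S_t$ is a deformation of the analogous limiting pencil $\Gamma_o$ on $S_o$. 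Using the exact sequence \eqref{eq:pushforwardF}, each member of $\Gamma_o$ decomposes as $C_1'\cup C_2'$ with $C_i'\subset S_i$ cut out by an explicit restricted linear series on the rational surface $S_i$, and $C_1'\cap C_2'$ is a divisor in $|\mathcal{O}_B(W)|$. Because $S_1,S_2$ are rational, this decomposition is completely explicit, and one can enumerate the singular members of $\Gamma_o$.

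The main obstacle is controlling stability along the full pencil, not just for a general fiber. I would check on the rational surfaces $S_1,S_2$ that no member $C_1'\cup C_2'$ of $\Gamma_o$ acquires an unmarked rational bridge, an unmarked rational tail, or an unmarked smooth elliptic tail; from the explicit description of the linear series and of how components of $C_i'$ meet $B$, all such pathologies can be ruled out, with the sections carrying the three marked points on $C_1$ always meeting the central members at smooth non-nodal points. Since stability of pointed curves is an open condition on the total family of members of $\Gamma_t$ as $t\to o$, stability for every member of the central limit forces stability of every member of $\Gamma_t$ for $t$ in a neighborhood of $o$, hence for $S$ lying in a Zariski open subset of $|\mathcal{I}_C(5)|$. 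The same explicit description shows that no member of $\Gamma_o$ contains a smooth unmarked elliptic tail, and hence $\Gamma$ does not meet $\Delta_{1:\varnothing}$. Combining these checks completes the verification of $(\star\star)$.
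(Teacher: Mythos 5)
Your outline coincides with the paper's strategy---degenerate the quintic to a normal crossing union of two rational surfaces, apply Proposition \ref{prop:smoothing}, analyse the limiting pencil on the rational components, and conclude by openness of nodality and stability---but it leaves unproved exactly the steps that carry the content of the lemma. First, to invoke Proposition \ref{prop:smoothing} one must actually verify that $h^0(S_o,\varepsilon_{o,*}\mathcal{O}_{\widetilde S_o}(\widetilde C_o))$ equals $h^0(S_t,\mathcal{O}_{S_t}(C_t))=4$, i.e.\ that the kernel of the difference map in \eqref{eq:pushforwardF} has the expected dimension; this is not formal, and in the paper it is checked on a concrete degeneration (quintic $=$ smooth cubic $S_1$ union smooth quadric $S_2$, with $C_2\in|\mathcal{O}_{S_2}(3,1)|$ and $C_1\in|13L-5E|$ through $8$ of the $12$ points of $C_2\cap B$) by computing the $h^0$ of all three terms and the injectivity of the restriction maps with Macaulay2. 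Second, and more seriously, the assertion that ``all such pathologies can be ruled out'' from the explicit description of $\Gamma_o$ is precisely what has to be proved. The piece $\Gamma_1$ of the limiting pencil on $S_1$ is \emph{not} a general pencil in $|\mathcal{O}_{S_1}(C_1)\otimes\mathcal{I}_{Z_1}|$: it is forced to match the restriction of $\Gamma_2$ along $B$, and since the restriction map from $S_1$ is an isomorphism onto $H^0(B,\mathcal{O}_B(W))$ while the image of $H^0(S_2,\mathcal{O}_{S_2}(C_2)\otimes\mathcal{I}_{Z_2})$ is a proper subspace, $\Gamma_1$ lives in a constrained $4$-dimensional subsystem. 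So Caporaso--Harris-type genericity is unavailable, and the statement that every member of $\Gamma_1$ is nodal with nodes off $B$ again requires an explicit (in the paper, computer-assisted) verification. Likewise, avoiding $\Delta_{1:\varnothing}$ is established in the paper by a Hodge-index and line-class analysis on the cubic surface excluding splittings $C_1'\sim F+M$, followed by an enumeration of the possible dual graphs; your proposal only asserts the conclusion.

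Two smaller points: the components $S_1,S_2$ are rational and hence cannot themselves be canonical surfaces (only their union is a degenerate canonical quintic); and the marked points $p_1,p_2,p_3$ are by definition base points of $\Gamma=|\mathcal{O}_S(C)\otimes\mathcal{I}_{p_1,p_2,p_3}|$, so they cannot be chosen ``away from the base points''---what one needs is that they are smooth points of every member, which follows once all members are shown to be nodal with controlled singular loci.
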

\begin{proof}
	We first prove that $S$ is smooth and that all curves in $\Gamma$ are at worst nodal. Indeed, in this case the curves are automatically stable, because the corresponding dualizing sheaf is given by the restriction to the curve of $\mathcal{O}_S(C+K_S) \cong \mathcal{O}_S(C+H)$ and since $\mathcal{O}_S(C)$ is globally generated and $\mathcal{O}_S(H)$ is very ample, we see that the dualizing sheaf is ample.
	
	We check this by an explicit degeneration, with the help of the Macaulay2 code \cite{MacaulayCode}, which has been inspired by \cite{KT}. As usual in these cases, the explicit computations are done over a finite field $\mathbb{F}_p$, but by semicontinuity 
	an explicit example over $\mathbb{F}_p$ shows that there is another example over $\mathbb{Q}$ and then $\mathbb{C}$.    
	
	Benzo considers in \cite[Proof of Proposition 3.7]{Be} a degeneration where the quintic is the union of a smooth quartic and a plane, instead we consider a degeneration where the quintic is the union of a quadric and a cubic, because when both surfaces are rational the computations can be made explicit. Thus, let $S_1\subseteq \mathbb{P}^3$ be a smooth cubic surface and $S_2\subseteq \mathbb{P}^3$ a smooth quadric that intersect $S_1$ transversally along a canonical genus four curve $B=S_1\cap S_2$. We can look at $S_1$ as the blow up of $\mathbb{P}^2$ at $6$ points, embedded by the linear system $-K_{S_1} \sim 3L - E$, where $L$  is the class of a line in $\mathbb{P}^2$ and $E$ is the exceptional divisor of the blow-up.
	
	Let $S_o=S_1\cup S_2$ as in Section \ref{sec2}. We can choose on $S_2$ a general curve $C_2$ of class $\mathcal{O}_{S_2}(3,1)$: this is a rational curve of degree $4$ and intersects $B$ transversally along $12$ points. Amongst these, we can choose a subset $W$ of $8$ points and then take a general curve $C_1\subseteq S_1$ of class $13L-5E$ passing through $W$ : this is a smooth curve of genus $6$ and degree $9$ in $\mathbb{P}^3$, and the union $C_o=C_1\cup C_2$ has arithmetic genus $13$ and degree $13$. 
	
	Since the Hilbert scheme of genus 13 and degree 13 curves in $\mathbb{P}^3$ is irreducible, $C_o$ can be smoothed in a smooth family $\mathcal{C}$: cf. \cite[Proof of Proposition 3.7]{Be}. Furthermore, it is easy to check via Macaulay2 that $h^0(\mathbb{P}^3,\mathcal{I}_{C_o}(5))=3$ and that a general quintic that contains $C_o$ is smooth, so that $S_o$ can be smoothed out to a family $\mathcal{S}$. Then, we are in the setting of Proposition \ref{prop:smoothing}, and to show that sections of $\varepsilon_{o,*}\mathcal{O}_{\widetilde{S}_o}(\widetilde{C}_o)$ smooth to sections of $\mathcal{O}_{S_t}(C_t)$ we need to prove that $h^0(S_o,\varepsilon_{o,*}\mathcal{O}_{\widetilde{S}_o}(\widetilde{C}_o))=4$. By construction, $H^0(S_o,\varepsilon_{o,*}\mathcal{O}_{\widetilde{S}_o}(\widetilde{C}_o))$ is the kernel of the difference map
	\[
	H^0(S_1,\mathcal{O}_{S_1}(C_1)\otimes \mathcal{I}_{Z_1,S_1} ) \oplus H^0(S_2,\mathcal{O}_{S_2}(C_2)\otimes \mathcal{I}_{Z_2,S_2}) \longrightarrow H^0(B,\mathcal{O}_{B}(W))
	\]
	where we have kept the same notation as in \eqref{eq:pushforwardF}. Via Macaulay2 \cite{MacaulayCode} we can easily compute 
	\[ h^0(S_1,\mathcal{O}_{S_1}(C_1)\otimes \mathcal{I}_{Z_1,S_1} )=5, \qquad h^0(S_2,\mathcal{O}_{S_2}(C_2)\otimes \mathcal{I}_{Z_2,S_2}) = 4, \qquad h^0(B,\mathcal{O}_{B}(W))=5 \]
	and moreover we can check that the restriction map $H^0(S_1,\mathcal{O}_{S_1}(C_1)\otimes \mathcal{I}_{Z_1,S_1} )\to H^0(B,\mathcal{O}_B(W))$ is an isomorphism and that the map $H^0(S_2,\mathcal{O}_{S_2}(C_2)\otimes \mathcal{I}_{Z_2,S_2} )\to H^0(B,\mathcal{O}_B(W))$ is injective.  Hence the kernel of the difference map above has the expected dimension $4$. Hence, Proposition \ref{prop:smoothing} applies, so that sections in $H^0(S_o,\varepsilon_{o,*}\mathcal{O}_{\widetilde{S}_o}(\widetilde{C}_o))$ extend to sections in $H^0(S_t,C_t)$. 
	
	Then, we can compute explicitly a pencil $\Gamma$ in $H^0(S_o,\varepsilon_{o,*}\mathcal{O}_{\widetilde{S}_o}(\widetilde{C}_o))$ where all curves are stable, and by the previous discussion this can be smoothed out to a pencil with property $(\star\star)$. To compute this pencil, we start with a general pencil $\Gamma_2 \subseteq |\mathcal{O}_{S_2}(C_2)\otimes \mathcal{I}_{Z_2,S_2}|$: we can check that the points of $Z_2$ are general in $S_2$ so that all curves in $\Gamma_2$ are at worst nodal with at most one node. Moreover, all the nodes are away from the curve $B$. Via Macaulay2 \cite{MacaulayCode} we can then compute a pencil $\Gamma_1 \subseteq |\mathcal{O}_{S_1}(C_1)\otimes \mathcal{I}_{Z_1,S_1}|$ whose restriction to $|\mathcal{O}_B(W)|$ coincides with that of $\Gamma_2$. This gives a pencil $\Gamma$ in $H^0(S_o,\varepsilon_{o,*}\mathcal{O}_{\widetilde{S}_o}(\widetilde{C}_o))$ and we can compute with Macaulay2 \cite{MacaulayCode} that all curves in $\Gamma_1$ are also at worst nodal and that the nodes are away from $B$.
	
	To conclude, we need to check the last condition of $(\star\star)$, namely that the pencil $\Gamma$ does not intersect the boundary divisor $\Delta_{1:\varnothing}$. First we observe that, since the pencil $\Gamma_2$ is general, every element there is either smooth or an union of two smooth components meeting at one node.  Then we are going to prove that every curve in the pencil $\Gamma_1$ is irreducible and with at most one node. With the help of Macaulay2 \cite{MacaulayCode} we can prove that all the curves in the pencil $\Gamma_1$ have at most one node.  Now we see from \cite{dR} that $C_1$ is very ample on $S_1$, and then \cite[Theorem A]{BL} shows that any curve in $\Gamma_1$ is 2-connected and in particular it is not the union of two components meeting at one node, thus every curve in $\Gamma_1$ is irreducible.
	
	
	The argument above shows that every curve in the resulting pencil $\Gamma$ on $S_1\cup S_2$ has one of the following \textit{dual graphs} (see \cite{ACG}):

\begin{equation}\label{dual.graph}
\begin{tikzpicture}[scale=0.6]
\draw[very thick] (0,0) circle (0.50);
\node at (0,0){$0$};
\node at (1.5,0.4){$8$};
\draw[very thick] (0.5,0) -- (2.5,0);

\draw[very thick] (3,0) circle (0.50);
\node at (3,0){$6$};

\draw[very thick] (6,0) circle (0.50);
\node at (6,0){$0$};
\node at (7.5,0.4){$8$};
\draw[very thick] (6.5,0) -- (8.5,0);

\draw[very thick] (9,0) circle (0.50);
\node at (9,0){$5$};

\draw[very thick] (10.5,0) arc (0:142:0.65);
\draw[very thick] (10.5,0) arc (0:-142:0.65);

\draw[very thick] (1,-3) circle (0.50);
\node at (1,-3){$6$};
\node at (-0.5,-4.5){$a$};
\node at (2.5,-4.5){$b$};
\draw[very thick] (0.80,-3.45) -- (-0.70,-5.6);
\draw[very thick] (1.20,-3.45) -- (2.70,-5.6);

\draw[very thick] (-1,-6) circle (0.50);
\node at (-1,-6){$0$};
\draw[very thick] (-0.5,-6) -- (2.5,-6);

\draw[very thick] (3,-6) circle (0.50);
\node at (3,-6){$0$};

\draw[very thick] (8,-3) circle (0.50);
\node at (8,-3){$5$};
\node at (6.5,-4.5){$a$};
\node at (9.5,-4.5){$b$};
\draw[very thick] (8,-1.5) arc (90:235:0.65);
\draw[very thick] (8,-1.5) arc (90:-55:0.65);
\draw[very thick] (7.80,-3.45) -- (6.3,-5.6);
\draw[very thick] (8.20,-3.45) -- (9.70,-5.6);

\draw[very thick] (6,-6) circle (0.50);
\node at (6,-6){$0$};
\draw[very thick] (6.5,-6) -- (9.5,-6);

\draw[very thick] (10,-6) circle (0.50);
\node at (10,-6){$0$};

\end{tikzpicture}
\end{equation}

The number on the edges represent how many edges are there between the adjacent vertices and we omit it if the number of edges is one. Moreover, $a$ and $b$ are non negative integers subject to the condition $a+b=8$. Recall \cite{ACG} that if $G$ is a dual graph and $e$ an edge between vertices $v_1,v_2$, a \textit{smoothing} of $G$ along $e$ is the dual graph obtained from $G$ by removing $e$ and replacing $v_1,v_2$ with a single vertex $v$ with $g(v)=g(v_1)+g(v_2)$. Observe that if there is more than one edge between $v_1$ and $v_2$, smoothing along one forces the remaining edges to become self-edges at $v$. If $e$ is a self edge on $v$, then a smoothing along $e$ consist of removing $e$ and increasing $g(v)$ by one. A \textit{smoothing} of a dual graph is an iteration of the described surgery on edges. Let $C$ be a curve with dual graph $G$ and let $H$ be another dual graph. Then $[C]\in\overline{\mathcal{M}}_g$ lies in the boundary strata $\Delta_{H}$ if $H$ is a \textit{smoothing} of $G$. One can easily see from \eqref{dual.graph} that no curve on $\Gamma$ lies in $\Delta_i$, for $i=1, \ldots, 6$.  In particular, the last condition of $(\star\star)$ is fulfilled.

\end{proof}

\begin{proposition}
\label{propM13,4}
	The moduli space $\overline{\mathcal{M}}_{13,4}$ is uniruled.
\end{proposition}
\begin{proof}
	We use the principle described in Proposition \ref{prop2.sec1}. Let $[C,p_1,p_2,p_3]$ be a general point in $\mathcal{M}_{13,3}$ and $\Gamma$ a good pencil as constructed in Lemma \ref{lemma:genus13pencil}. Observe that in order to resolve the pencil we have to blow up the quintic surface $S$ at $11$ points. Let $D$ be a quintic plane curve $D\in |\mathcal{O}_S(1)|$.  
	\begin{equation}
	\label{eq1.sec2}
	\begin{tikzcd}
	\mathcal{X}_D\arrow[d]\arrow[r]& \tilde{S}\arrow[d]\arrow[r, "Bl_{11}"]&S\\
	D\arrow[r]&\Gamma.&
	\end{tikzcd}
	\end{equation}
	Recall that $C\subset \pp^3$ has degree $13$ and $S$ is a canonical surface, giving us 
	$$(C\cdot D)=13\quad\hbox{and}\quad (K_S\cdot D)=5.$$ 
Moreover, one computes that 	
	$$\chi_{top}\left(\tilde{S}\right)=66\quad \hbox{and}\quad \chi(\mathcal{O}_{\tilde{S}})=5.$$
The formulas \eqref{standard} give us
	$$ (\Gamma\cdot K_{\overline{\mathcal{M}}_{13,3}})=-4.$$
	Then by Proposition \ref{prop2.sec1} we conclude that $\overline{\mathcal{M}}_{13,4}$ is uniruled.
\end{proof}

\subsection{Genus $14$}\label{sec:genus14}
Pencils in genus $14$ rely on the existence of a smooth $(2,2,2,2)$ complete intersection surface in $\mathbb{P}^6$ containing the curve in question. We will construct pencils satisfying $(\star\star)$ by smoothing pencils in the union of two rational surfaces meeting along a canonical curve of genus $7$ in $\pp^6$. This is inspired by \cite{V} and \cite{BV}. We borrow the description of a general curve of genus $14$ from \cite{V}. A general curve $C$ of genus $14$ admits finitely many line bundles $L$ of degree $8$ such that $h^0(C,L)=2$. For every such line bundle the residual $K_C-L$ is very ample and induces an embedding 
$C\subseteq\pp^6$  
realizing $C$ as a degree $18$ curve in a six dimensional projective space. Moreover, the space $H^0(\pp^6, \mathcal{I}_C(2))$ of quadrics vanishing at $C$ is five dimensional. In particular $C$ lies in a $(2,2,2,2)$ complete intersection surface 
$$C\subset S=Q_1\cap\ldots\cap Q_4.$$
The surface $S$ is canonical, hence  by adjunction we see that $\mathcal{O}_C(C) \cong L$, the $\mathfrak{g}^1_8$ that we started with. In particular, we see that 
$\dim |\mathcal{O}_S(C)|=2$
and for a general point $p\in S$, the linear system consisting of curves in $|\mathcal{O}_S(C)\otimes \mathcal{I}_p|$ is a general pencil $\Gamma$ in $|\mathcal{O}_S(C)|$.

Now we should check that $\Gamma$ satisfies condition $(\star\star)$. The idea is to degenerate to a reducible surface $S_o=S_1\cup S_2$ as above. We summarize here the construction in \cite[Section 3]{BV}: let $S_1\subset \pp^6$ be a degree $6$ Del Pezzo surface and $Q_1,\ldots,Q_4$ four general quadrics containing $S_1$. Recall that $\mathcal{I}_{S_1/\pp^6}$ is generated by quadrics. By \cite[Lemma 3.2]{BV} and the subsequent discussion, the scheme theoretic intersection of the quadrics is reduced and
$$Q_1\cap\ldots\cap Q_4=S_1\cup S_2,$$
where $S_2$ is a smooth irreducible rational surface of degree $10$. Moreover, $S_1$ and $S_2$ meet transversally along a quadric section $B=S_1\cap S_2\in|\mathcal{O}_{S_1}(2H)|$. The curve $B$ is a smooth canonical curve of genus $7$ and degree $12$ in $\pp^6$. By the discussion after \cite[Lemma 3.2]{BV} the surface $S_2$ is isomorphic to the blow-up of $\pp^2$ at general points $p_1,\ldots,p_{11}$ and the hyperplane class $H$ is given by $6L-2(E_1+\dots+E_5)-(E_6+\dots+E_{11})$, where $L$ is the class of a line in $\mathbb{P}^2$ and the $E_i$ are the exceptional divisors of the blow-up. Instead, \cite[Equation (3.6)]{BV} shows that the class of $B$ in $S_2$ is given by $H-K_{S_2}$. A general curve $C_2\in|2L-E_1-E_2-E_{10}-E_{11}|$ is a smooth rational normal curve in $\pp^6$. Finally, we fix the intersection $W:=C_2\cap B$, which consists of $8$ distinct points, by \cite[Proposition 3.8]{BV}, and we take a general element 
\begin{equation}\label{eq:C1genus14}
C_1\in\left|\mathcal{O}_{S_1}\left(B\right)\otimes \mathcal{I}_{W/S_1}\right|.
\end{equation} 
Now, let $S_o=S_1\cup S_2$ and $C_o=C_1\cup C_2$. The curve $C_o\subseteq \pp^6$ has degree $18$ and arithmetic genus $14$ and the pair $(S_o,C_o)$ smooths out as in Proposition \ref{prop:smoothing}: 

\begin{lemma}
	\label{lemma1.gen14}
	The pair $(S_o,C_o)$ smooths out to families $\mathcal{C}\subseteq \mathcal{S}$ as in Proposition \ref{prop:smoothing}, where $\mathcal{S}$ is a family of $(2,2,2,2)$ complete intersections. In particular, when $C\in \mbar_{14}$ and $L\in W^1_8(C)$ are general, the general complete intersection surface $S$ as above is smooth. 
\end{lemma}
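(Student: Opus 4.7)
The plan is to construct a flat family $\mathcal{C} \subseteq \mathcal{S} \subseteq \mathbb{P}^6 \times \Delta$ over a small disk $(\Delta, o)$ whose central fiber is $(C_o, S_o)$ and whose general fiber $(\mathcal{C}_t, \mathcal{S}_t)$ consists of a smooth canonical curve of genus $14$ embedded in a smooth $(2,2,2,2)$ complete intersection surface, making sure that the total family $\mathcal{C}$ is smooth so that Proposition \ref{prop:smoothing} applies. The construction proceeds in two steps: first smooth $C_o$ inside $\mathbb{P}^6$ with smooth total space, then produce a family of $(2,2,2,2)$ complete intersections containing it.

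For the first step, the curve $C_o = C_1 \cup C_2$ has smooth components meeting transversely at the points of $W = C_1 \cap C_2$, arithmetic genus $14$, and degree $18$; by the main construction of \cite{BV}, it lies in the component of the Hilbert scheme of curves in $\mathbb{P}^6$ whose general member is a smooth canonical genus $14$ curve of the required type. I would invoke the corresponding smoothability statement from \cite{BV} to obtain a flat family $\mathcal{C} \to \Delta$ with smooth general fibers. The total space can be arranged to be smooth because the singularities of $C_o$ are ordinary nodes at the points of $W$, each sitting inside the smooth ambient $\mathbb{P}^6$, and hence smoothable via $\{xy = t\}$ in suitable local coordinates.

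For the second step, the key input is the dimension count $h^0(\mathbb{P}^6, \mathcal{I}_{C_o}(2)) = 5$, matching the generic value $h^0(\mathbb{P}^6, \mathcal{I}_C(2)) = 5$ on smooth fibers. This follows from the Mayer--Vietoris exact sequence
\[
0 \to \mathcal{I}_{C_o/\mathbb{P}^6}(2) \to \mathcal{I}_{C_1/\mathbb{P}^6}(2) \oplus \mathcal{I}_{C_2/\mathbb{P}^6}(2) \to \mathcal{I}_{W/\mathbb{P}^6}(2) \to 0
\]
combined with explicit cohomology computations on the rational surfaces $S_1$ and $S_2$, carried out in \cite{BV}. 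Upper semicontinuity then forces $\pi_* \mathcal{I}_{\mathcal{C}/\mathbb{P}^6 \times \Delta}(2)$ to be locally free of rank $5$ over $\Delta$, and picking four sections of this sheaf whose restrictions to the central fiber cut out $S_o$ produces a relative $(2,2,2,2)$ complete intersection $\mathcal{S} \to \Delta$ containing $\mathcal{C}$, with $\mathcal{S}_o = S_o$.

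Finally, I would verify the hypotheses of Proposition \ref{prop:smoothing}. Conditions (a) and (c) are achieved by construction, while smoothness of $\mathcal{S}_t$ for generic $t$ is a Bertini-type statement: a general $(2,2,2,2)$ complete intersection in $\mathbb{P}^6$ containing a smooth canonical genus $14$ curve is smooth, and this openness argument also yields the last assertion about generic $(C, L)$, since smoothness is open on the parameter space. The main obstacle I anticipate is ensuring that the smooth total space produced in step (i) is compatible with the family of complete intersections constructed in step (ii): since the nodes of $C_o$ lie in $W$, which is disjoint from the non-Cartier locus $Z_1 \cup Z_2$ of $C_o$ in $S_o$, the local analysis of Lemma \ref{lemma:localequation} applies only at the points of $Z_1 \cup Z_2$ (where $\mathcal{S}$ acquires threefold singularities), while at the nodes of $C_o$ both $\mathcal{S}$ and $\mathcal{C}$ remain smooth, giving the compatibility.
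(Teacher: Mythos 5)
Your overall architecture matches the paper's: smooth $C_o$ inside $\mathbb{P}^6$ with smooth total space, verify $h^0(\mathbb{P}^6,\mathcal{I}_{C_o}(2))=5$ to extend the four quadrics to a relative complete intersection $\mathcal{S}\supseteq\mathcal{C}$, and then check the hypotheses of Proposition \ref{prop:smoothing}. The serious gap is in your last step. Smoothness of $\mathcal{S}_t$ for $t\neq o$ (hypothesis (b) of Proposition \ref{prop:smoothing}) and the ``in particular'' clause of the lemma are \emph{not} Bertini-type statements: Bertini only gives smoothness of a general member of $|\mathcal{I}_C(2)|$ away from the base locus, and the base locus contains $C$ itself, so it says nothing about smoothness of $S=Q_1\cap\dots\cap Q_4$ along $C$ (one would need, e.g., global generation of $\mathcal{I}_C/\mathcal{I}_C^2(2)$, which requires proof). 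Likewise, ``smoothness is open on the parameter space'' only propagates smoothness \emph{from} a fiber you already know to be smooth, and the only fiber you have in hand, $S_o=S_1\cup S_2$, is singular along $B$ --- so the argument is circular: you are assuming the very statement the lemma is designed to prove. The paper closes this by showing $h^1(T_{\mathbb{P}^6}\otimes\mathcal{O}_{C_o})=0$ (via the Euler sequence on $C_2$ and the normalization sequence for $C_o$), which puts $[C_o]$ as a smooth point of the unique component $\mathcal{H}$ of the Hilbert scheme dominating $\overline{\mathcal{M}}_{14}$ with surjective Kodaira--Spencer map, then places the flag $[C_o\hookrightarrow S_o]$ in the open locus of the flag Hilbert scheme $\widetilde{\mathcal{H}}\to\mathcal{H}$ over which it is a $\mathbb{P}^4$-bundle, and finally invokes the argument of \cite[Thm.\ 3.11]{BV} --- which rests on the local analysis of the threefold $\mathcal{S}$ near $Z_1\cup Z_2$ (ordinary double points $\{xy=tz\}$, i.e.\ the deformation parameter $f$ having simple zeros) --- to conclude that the general surface in that family is smooth. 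Some version of this degeneration argument is unavoidable; it cannot be replaced by Bertini plus openness.

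Two smaller points. First, your computation of $h^0(\mathbb{P}^6,\mathcal{I}_{C_o}(2))=5$ via the Mayer--Vietoris sequence is a plausible alternative route, but it is not the computation ``carried out in \cite{BV}'': the paper and \cite{BV} instead use the auxiliary $(-1)$-curve $N\subseteq S_1$ and the sequence $0\to\mathcal{I}_{C_o\cup N}(2)\to\mathcal{I}_{C_o}(2)\to\mathcal{O}_N\to 0$ together with $h^0(\mathcal{I}_{S_1\cup C_2}(2))=h^0(\mathcal{I}_{C_o\cup N}(2))=4$ and the vanishing of higher cohomology; if you want the Mayer--Vietoris route you must actually compute $h^i(\mathcal{I}_{C_j}(2))$, $h^i(\mathcal{I}_W(2))$ and the surjectivity of the restriction map, and you still need $h^1(\mathcal{I}_{C_o}(2))=0$ for the base-change statement. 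Second, the general fiber $\mathcal{C}_t$ is not a canonical curve: it is embedded by the residual series $|K_C-L|$, a $\mathfrak{g}^6_{18}$ (the \emph{surface} $S$ is canonical, not the curve), and the smoothability-in-the-right-component input you attribute to \cite{BV} is stated there for genus $15$; the genus $14$ analogue is exactly what the $h^1(T_{\mathbb{P}^6}\otimes\mathcal{O}_{C_o})=0$ computation supplies.
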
 

The proof is completely analogous to the proof of \cite[Theorem 3.11]{BV} in genus $15$. We summarize the construction for sake of completeness.  

\begin{proof}[Proof of Lemma \ref{lemma1.gen14}]
	Consider the exact sequences
	$$0\to T_{\pp^6}\otimes\mathcal{O}_{C_o}\to T_{\pp^6}\otimes\left(\mathcal{O}_{C_1}\oplus\mathcal{O}_{C_2}\right)\to T_{\pp^6}\otimes\mathcal{O}_W\to0$$
	$$0\to\mathcal{O}_{C_2}\to\mathcal{O}_{C_2}(H)^{\oplus 7}\to T_{\pp^6}\otimes\mathcal{O}_{C_2}\to0,$$
	where the second one is the restriction of the Euler sequence to $C_2$. The map $H^0(T_{\pp^6}\otimes\mathcal{O}_{C_2})\to H^0(T_{\pp^6}\otimes\mathcal{O}_W)$ is an isomorphism and $h^1(T_{\pp^6}\otimes\mathcal{O}_{C_1})=0$, since $C_1$ is a canonical curve. This implies $h^1(T_{\pp^6}\otimes\mathcal{O}_{C_o})=0$ ( see \cite[Proof of Prop. 2.6]{BV} for details) placing $C_o\hookrightarrow \pp^6$ in the unique component $\mathcal{H}$ of the Hilbert scheme of curves of genus $14$ and degree $18$ in $\pp^6$ that dominates $\overline{\mathcal{M}}_{14}$. Moreover, the Kodaira-Spencer map 
	$$T_{\left[C_o\hookrightarrow\pp^6\right]}\mathcal{H}\longrightarrow T_{\left[C_o\right]}\overline{\mathcal{M}}_{14}$$
	is surjective and $C_o$ can be smoothed as a degree $18$ curve of genus $14$ inside $\pp^6$, in such a way that the total family $\mathcal{C} \to \Delta$ is smooth.
	
	Let $N\subset S_1$ be one of the $(-1)$-curves in $S_1$. The curves $C_o$ and $N$ intersect transversally at $2$ points not in $W$. Moreover, $h^0(\mathbb{P}^6,\mathcal{I}_{S_1\cup C_2/\mathbb{P}^6}(2H))=h^0(\mathbb{P}^6,\mathcal{I}_{C_o\cup N/\mathbb{P}^6}(2H))=4$ and $h^i(\mathbb{P}^6,\mathcal{I}_{C_o\cup N/\mathbb{P}^6}(2H))=0$ for $i\geq 1$, cf. \cite[Prop. 3.10]{BV}. From the exact sequence
	$$0\longrightarrow \mathcal{I}_{C_o\cup N/\mathbb{P}^6}(2)\longrightarrow\mathcal{I}_{C_o/\mathbb{P}^6}(2)\longrightarrow\mathcal{O}_N\longrightarrow 0$$ 
	one concludes that $h^0(\mathbb{P}^6,\mathcal{I}_{C_o/\mathbb{P}^6}(2))=5$ and $h^i(\mathbb{P}^6,\mathcal{I}_{C_o/\mathbb{P}^6}(2))=0$ for $i\geq 1$. Let 
	\begin{equation}
	\label{eq1:lemma1.gen14}
	\widetilde{\mathcal{H}}\longrightarrow \mathcal{H}
	\end{equation}
	be the Hilbert scheme parameterizing flags in $\pp^6$ 
	$$\left[C\hookrightarrow S\hookrightarrow \pp^6\right]\in \widetilde{\mathcal{H}},$$
	where $\left[C\hookrightarrow \pp^6\right]\in \mathcal{H}$ and $S$ is a complete intersection of four independent quadrics containing $C$. The map \eqref{eq1:lemma1.gen14} is generically a $\pp^4$-bundle. Let $\mathcal{U}\subset \widetilde{\mathcal{H}}$ be the open subset where the map \eqref{eq1:lemma1.gen14} restricts to a $\pp^4$-bundle. The argument before places the flag $\left[C_o\hookrightarrow S_o\hookrightarrow \pp^6\right]$ in $\mathcal{U}$. The same argument as in \cite[Thm. 3.11]{BV} shows that for  general point in $\mathcal{U}$, the surface $S$ is smooth.
\end{proof}

Now we are in place to show the stability of every element on a general pencil.

\begin{lemma}
	\label{lemma2.gen14}
	For a general choice of curve $(C,p)\in\overline{\mathcal{M}}_{14,1}$, linear series $L \in W^1_{8}(C)$, and $(2,2,2,2)$ complete intersection surface $S$ as before, the pencil $\Gamma=\left|\mathcal{O}_{S}(C)\otimes\mathcal{I}_{p}\right|$ satisfies condition $(\star\star)$. 
\end{lemma}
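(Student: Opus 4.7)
The approach is to mimic the strategy of Lemma \ref{lemma:genus13pencil}, degenerating to the normal crossing surface $S_o=S_1\cup S_2$ constructed above and applying Proposition \ref{prop:smoothing} in combination with Lemma \ref{lemma1.gen14}. First, I would verify the dimension hypothesis of Proposition \ref{prop:smoothing}, namely
\[
h^0\bigl(S_o,\,\varepsilon_{o,*}\mathcal{O}_{\widetilde{S}_o}(\widetilde{C}_o)\bigr)=h^0(S_t,\mathcal{O}_{S_t}(C_t))=3.
\]
Via the exact sequence \eqref{eq:pushforwardF} this reduces to cohomological computations on the Del Pezzo sextic $S_1$, on $\mathbb{P}^2$ blown up at $11$ general points, and on the canonical genus $7$ curve $B$, together with a verification that the difference map into $H^0(B,\mathcal{O}_B(W))$ has the expected corank. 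These are in principle tractable by hand, but can in any case be certified rigorously via an explicit Macaulay2 computation over a finite field, in exactly the spirit of Lemma \ref{lemma:genus13pencil}.

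Once this dimension matching is in place, any pencil cut out on $S_o$ lifts to a pencil on the nearby smooth $(2,2,2,2)$ complete intersections. I would fix a general point $p\in S_2\setminus B$ and construct an explicit pencil $\Gamma_o$ inside $H^0(S_o,\,\varepsilon_{o,*}\mathcal{O}_{\widetilde{S}_o}(\widetilde{C}_o)\otimes\mathcal{I}_p)$ by pairing compatible pencils $\Gamma_i\subset|\mathcal{O}_{S_i}(C_i)\otimes\mathcal{I}_{Z_i/S_i}|$ whose induced divisors on $B$ agree (with the single point $p$ imposing one further linear condition on $S_2$). On the Macaulay2 model one should then verify that every curve in $\Gamma_o$ is either smooth or carries a single node, with nodes disjoint from $B$ and from $p$. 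Since the dualizing sheaf of such a $C'$ is the restriction of $\mathcal{O}_S(C+K_S)=\mathcal{O}_S(C+H)$, which is ample, stability is automatic, and base-point-reducedness is immediate because the pencil is based at a single reduced point. Semicontinuity then transfers all of these properties to the smoothed pencil $\Gamma=|\mathcal{O}_S(C)\otimes\mathcal{I}_p|$ on the generic smooth fibre $S=S_t$.

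The main obstacle, and the step that I would expect to require the most care, is the last clause of $(\star\star)$: ruling out intersections with $\Delta_{1:\varnothing}$, i.e.\ excluding a nodal decomposition $C'=F+M$ with $F$ smooth of genus $1$ and $(F\cdot M)=1$. My plan is to argue via adjunction and the intersection lattice on $S$. Since $S$ is canonical with $K_S=H$ one has $(H^2)=16$, $(C\cdot H)=18$, and $(C^2)=\deg\mathcal{O}_C(C)=8$. For a smooth elliptic $F\subset \mathbb{P}^6$, adjunction forces $(F^2)=-(F\cdot H)<0$, while the decomposition $C=F+M$ imposes $(M^2)=(C^2)-2(F\cdot M)-(F^2)=6+(F\cdot H)$ and $(M^2)+(M\cdot H)=24$. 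Working in the rank two sublattice of $\Pic(S)$ spanned by $H$ and $C$ (which is generically the whole Picard lattice by a Noether--Lefschetz argument, and is in any case preserved under the degeneration to $S_o$), writing $F=aH+bC$ and eliminating between the two adjunction equations reduces to the linear Diophantine constraint $34a-26c+25=0$ (with $c=1-b$), which has no integer solutions because $\gcd(34,26)=2\nmid 25$. As a safeguard in case $\Pic(S)$ is enlarged, the same exclusion can be cross-checked directly on $S_o$ by enumerating the finitely many classes of elliptic curves on the Del Pezzo sextic $S_1$ and on the blow-up $S_2$ and inspecting the possible dual graphs analogous to \eqref{dual.graph}, thereby completing the verification of $(\star\star)$.
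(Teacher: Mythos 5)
Your overall strategy coincides with the paper's: degenerate to $S_o=S_1\cup S_2$, invoke Lemma \ref{lemma1.gen14} to enter the setting of Proposition \ref{prop:smoothing}, verify $h^0(S_o,\varepsilon_{o,*}\mathcal{O}_{\widetilde S_o}(\widetilde C_o))=3$ via the difference map of \eqref{eq:pushforwardF}, exhibit a good pencil on the central fibre, and transfer its properties to nearby smooth $(2,2,2,2)$ surfaces. One difference of execution: in genus $14$ the paper needs no computer algebra. The section count is done by hand, using that $H^0(S_2,\mathcal{O}_{S_2}(C_2))\to H^0(B,\mathcal{O}_B(W))$ is an isomorphism of $2$-dimensional spaces (via $h^i(S_2,\mathcal{O}_{S_2}(H-C_2))=0$ and linear normality results from \cite{V}) and that $H^0(S_1,\mathcal{O}_{S_1}(C_1)\otimes\mathcal{I}_{Z_1})\to H^0(B,\mathcal{O}_B(W))$ is surjective from a $3$-dimensional space; nodality of the members of $\Gamma_1$ then comes from Caporaso--Harris \cite{CH} applied to a general pencil of plane sextics with three assigned nodes, and $\Gamma_2=|\mathcal{O}_{S_2}(C_2)|$ is a general pencil of conics.

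Two steps of your argument would fail as stated. First, basing the pencil at a general $p\in S_2\setminus B$ does not yield an admissible pencil on $S_o$: since $h^0(S_2,\mathcal{O}_{S_2}(C_2))=2$, imposing $p$ fixes the $S_2$-component, and the resulting $2$-dimensional subspace of $H^0(S_o,\varepsilon_{o,*}\mathcal{O}_{\widetilde S_o}(\widetilde C_o))$ necessarily contains the kernel of the projection to $H^0(S_2,\mathcal{O}_{S_2}(C_2))$, namely the section cutting out $B$ on $S_1$ and vanishing identically on $S_2$; its zero locus is not a stable genus-$14$ curve. The paper does not carry $p$ through the degeneration at all: it suffices to produce one general good pencil in the net on $S_o$, because for general $p$ the pencil $|\mathcal{O}_S(C)\otimes\mathcal{I}_p|$ on the smooth surface is a general pencil in $|\mathcal{O}_S(C)|$. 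Second, your primary route to excluding $\Delta_{1:\varnothing}$ assumes $\operatorname{Pic}(S)=\mathbb{Z}H\oplus\mathbb{Z}C$, asserted via an unspecified Noether--Lefschetz argument; but $S$ is forced to lie in the Noether--Lefschetz locus by containing $C$, so this requires a genuine proof that you do not supply. (Granting it, your arithmetic is correct, and in fact $F\cdot(C-F)=F\cdot C-F^2$ is even for every $F\in\mathbb{Z}H+\mathbb{Z}C$, so no decomposition with $(F\cdot M)=1$ exists in that lattice.) Your fallback of enumerating classes and dual graphs on the rational surfaces $S_1,S_2$ is precisely what the paper does, so the lemma is salvageable, but as written the main line of the $\Delta_1$-exclusion rests on an unjustified hypothesis.
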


\begin{proof}
	As in the proof of Lemma \ref{lemma:genus13pencil}, we start by showing that all curves in $\Gamma$ are at worst nodal.
	We keep the same notation as above. 
	Lemma \ref{lemma1.gen14} puts us in the setting of Proposition \ref{prop:smoothing}, and in order to show that sections of $\varepsilon_{o,*}\mathcal{O}_{\widetilde{S}_o}(\widetilde{C}_o)$ deform to sections of $\mathcal{O}_{S_t}(C_t)$, we have to show that 
	$$h^0\left(S_o,\varepsilon_{o,*}\mathcal{O}_{\widetilde{S}_o}(\widetilde{C}_o)\right)=3.$$
	Note that $Z_2=\varnothing$ and $Z_1=C_1\cap B\setminus W$ with $|Z_1|=16$. By construction, $H^0(S_o,\varepsilon_{o,*}\mathcal{O}_{\widetilde{S}_o}(\widetilde{C}_o))$ is the kernel of the map 
	\begin{equation}
	\label{eq2:lemma.g15}
	H^0(S_1,\mathcal{O}_{S_1}(C_1)\otimes \mathcal{I}_{Z_1,S_1} ) \oplus H^0(S_2,\mathcal{O}_{S_2}(C_2)) \longrightarrow H^0(B,\mathcal{O}_{B}(W)).
	\end{equation}
	By the proof of  \cite[Proposition 3.8]{BV} , the restriction map $H^0(S_2,\mathcal{O}_{S_2}(C_2)) \longrightarrow H^0(B,\mathcal{O}_{B}(W))$ is an isomorphism and moreover $h^0(S_2,\mathcal{O}_{S_2}(C_2))=h^0(B,\mathcal{O}_B(W))=2$.
	Then, the exact sequence
	$$0\longrightarrow \mathcal{O}_{S_1}\longrightarrow \mathcal{O}_{S_1}(C_1)\otimes \mathcal{I}_{Z_1/S_1}\longrightarrow \mathcal{O}_{B}(W)\longrightarrow0$$ 
	shows that $h^0(S_1,\mathcal{O}_{S_1}(C_1)\otimes \mathcal{I}_{Z_1/S_1})=3$ and that the map 
	$
	H^0(S_1,\mathcal{O}_{S_1}(C_1)\otimes \mathcal{I}_{Z_1/S_1})\longrightarrow H^0(B, \mathcal{O}_{B}(W))
	$  
	is surjective.  Thus, the kernel of the map \eqref{eq2:lemma.g15} has dimension $3$ as expected  
	and we can apply Proposition \ref{prop:smoothing}.
	
	To conclude we construct a general pencil in $H^0\left(S_o,\varepsilon_{o,*}\mathcal{O}_{\widetilde{S}_o}(\widetilde{C}_o)\right)$. To do so, choose a general pencil  $\Gamma_1 \subseteq |\mathcal{O}_{S_1}(C_1)\otimes \mathcal{I}_{Z_1/S_1}|$: since the restriction from $H^0(S_1,\mathcal{O}_{S_1}(C_1)\otimes \mathcal{I}_{Z_1/S_1})$ to $H^0(B, \mathcal{O}_{B}(W))$ is surjective, this induces the complete pencil $\Gamma_{B}=|\mathcal{O}_B(W)|$ and since the restriction map from $H^0(S_2,\mathcal{O}_{S_2}(C_2))$ to $H^0(B,\mathcal{O}_{B}(W))$ is an isomorphism, this corresponds to the complete pencil $\Gamma_2 =|\mathcal{O}_{S_2}(C_2)|$. This builds a pencil $\Gamma$ on $S_o$ and to prove that the corresponding curves are at worst nodal, it is enough to show the same for the curves in $\Gamma_1$ and $\Gamma_2$ and we should also check that all the nodes are away from the curve $B$.  This is easily verified for $\Gamma_2$, since this is just a pencil of conics passing through 4 general points in $\mathbb{P}^2$. For $\Gamma_1$ we can prove with Macaulay2 \cite{MacaulayCode} that all the curves in this pencil are at worst one-nodal and that the nodes are away from $B$. At this point, we see from \cite{dR} that the linear system $|\mathcal{O}_{S_1}(C_1)|$ is very ample, and  then, it follows from \cite[Theorem A]{BL} that all the curves in $\Gamma_1$ are 2-connected, so they must be all irreducible.

	
	Finally, we should prove that the resulting pencil $\Gamma$ on $S_1\cup S_2$ intersects trivially $\Delta_1$ in $\overline{\mathcal{M}}_{14}$. As in the proof of Lemma \ref{lemma:genus13pencil}, by looking at the possible dual graphs one concludes that $\Gamma$  does not intersect $\Delta_i$ for $i=1,\ldots,7$.		
\end{proof}

As corollary we can prove:

\begin{proposition}
\label{propM14,3}
	The moduli space $\overline{\mathcal{M}}_{14,3}$ is uniruled.
\end{proposition}
\begin{proof}
	Recall that the following intersections hold in the canonical surface $S\subset \pp^6$:
	$$(C\cdot K_S)=18,\,\,\, (K_S^2)=16,\,\,\,\hbox{and}\,\,\, (C^2)=8.$$
	Let $\Gamma$ be the pencil $|\mathcal{O}_{S}(C)\otimes\mathcal{I}_p|$, where $p$ is a general point on $C$. To resolve the pencil $S\dashrightarrow \Gamma$ we have to blow up $S$ at $8$ points corresponding to the base locus of $\Gamma$. Observe that by \eqref{standard}, the following intersections hold in $\overline{\mathcal{M}}_{14,1}$:
	\begin{equation}
	(\Gamma\cdot\lambda)=21,\,\,\, (\Gamma\cdot\psi)=1,\,\,\,\text{and}\,\,\,(\Gamma\cdot\delta)=140.
	\end{equation}
	Let $D_1, D_1$ be two independent curves in $|\mathcal{O}_S(K_S)|$ away from the base locus of $\Gamma$.
	By Proposition \ref{prop2.sec1}, the curve $\Theta$ induced by base change
	\begin{equation}
	\begin{tikzcd}
	\widetilde{\mathcal{Y}}\arrow[d]\arrow[r]& \tilde{S}\arrow[d]\arrow[r, "Bl_{8}"]&S\\
	\Theta=D_1\times_\Gamma D_2\arrow[r]&\Gamma.&
	\end{tikzcd}
	\end{equation}
	defines a nef curve on $\overline{\mathcal{M}}_{14,3}$. Since 
	$$(\Theta\cdot K_{\overline{\mathcal{M}}_{14,3}})=18^2\left(-2+\frac{16}{9}\right)-16<0,$$
	Lemma \ref{lemma2.gen14} and Proposition \ref{prop2.sec1} show that the moduli space $\overline{\mathcal{M}}_{14,3}$ is uniruled.
\end{proof}

\subsection{Genus $15$}\label{sec.gen15}
The construction in genus $15$ relies on the same $(2,2,2,2)$ complete intersection surfaces $S\subseteq \mathbb{P}^6$ as in genus $14$. In particular, we will use the same normal crossing surface $S_o=S_1\cup S_2$ used in  genus $14$ and a nodal reducible curve $C_o=C_1\cup C_2$, the only difference is that now we define $C_1$ to be a general element
\begin{equation}\label{eq:C1genus15}
C_1\in\left|\mathcal{O}_{S_1}\left(B+N\right)\otimes \mathcal{I}_{W/S_1}\right|.
\end{equation} 
where $W:=C_2\cap B$ is as in Section \ref{sec:genus14} and $N$ is one of the $(-1)$-curves on $S_1$.

We recall the setting from  \cite{BV}. When $C$ is a general curve of genus $15$, the two Brill-Noether spaces 
$$ \dim W^1_{9}(C) = 1, \qquad \dim W^6_{19}(C) = 1  $$
are residual to each other. For a general $L\in W^1_{9}(C)$, the residual line bundle $K_C-L$ is very ample and induces an embedding $C\subseteq \mathbb{P}^6$ such that $h^0(\mathcal{I}_{C/\pp^6}(2))=4$. The intersection of all the quadrics containing $C$ is a complete intersection surface: 
$$ C\subseteq S=Q_1\cap Q_2\cap Q_3 \cap Q_4.$$
We observe that the surface is uniquely determined by $L\in W^1_9(C)$. 

The surface $S$ is canonical, meaning that $K_S \cong \mathcal{O}_S(H)$, where $H$ is the hyperplane class in $\mathbb{P}^6$. Then, adjunction implies that $\mathcal{O}_C(C)=\mathcal{O}_C(K_C-H)\cong L$ is the $\mathfrak{g}^1_{9}$ that we started with and moreover $h^0(S,\mathcal{O}_S(C))=3$, meaning that $C$ moves in net on $S$. Furthermore, since there is a $1$-dimensional family of $\mathfrak{g}_9^1$, given a general two-pointed curve $(C,p_1,p_2)\in \mathcal{M}_{15,2}$, we can choose an $L$ that does not separate $p_1,p_2$ so that the linear system
$$\Gamma=\left|\mathcal{O}_S(C)\otimes \mathcal{I}_{p_1,p_2}\right|$$
is one dimensional. Farkas and Verra in \cite[Proposition 4]{FV2} show that $\Gamma$ satisfies condition $(\star\star)$, moreover $\Gamma$ has only irreducible curves. 

\begin{lemma}[Proposition 4 in \cite{FV2}]
\label{lemma:genus15pencil} 
	For a general choice of the curve $(C,p_1,p_2) \in \mathcal{M}_{15,2}$ and the linear series $L \in W^1_{9}(C)$ as before the pencil $\Gamma$ satisfies condition $(\star\star)$. 
\end{lemma}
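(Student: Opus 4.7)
The plan is to follow, nearly verbatim, the template developed for genus $14$ in Lemma \ref{lemma2.gen14}. The degeneration $S_o = S_1 \cup S_2$ is exactly the one fixed at the beginning of Section \ref{sec.gen15}, with $C_1$ now taken to be a general element of $|\mathcal{O}_{S_1}(B+N) \otimes \mathcal{I}_{W/S_1}|$ as in \eqref{eq:C1genus15}. The resulting $C_o = C_1 \cup C_2 \subseteq S_o \subseteq \mathbb{P}^6$ has degree $19$ and arithmetic genus $15$, and the smoothing to a flat family $\mathcal{C} \subseteq \mathcal{S}$ whose generic fiber is a smooth $(2,2,2,2)$ complete intersection containing a smooth genus $15$ curve proceeds exactly as in Lemma \ref{lemma1.gen14}, relying on \cite[Theorem 3.11]{BV} together with the vanishings $h^1(T_{\mathbb{P}^6} \otimes \mathcal{O}_{C_o}) = 0$ and $h^0(\mathcal{I}_{C_o/\mathbb{P}^6}(2)) = 4$.

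With the smoothing in place, Proposition \ref{prop:smoothing} applies once one verifies that $h^0(S_o, \varepsilon_{o,*}\mathcal{O}_{\widetilde{S}_o}(\widetilde{C}_o)) = 3 = h^0(\mathcal{S}_t, \mathcal{O}_{\mathcal{S}_t}(\mathcal{C}_t))$. This is a direct cohomology computation mirroring the one in Lemma \ref{lemma2.gen14}: the restriction map $H^0(S_2, \mathcal{O}_{S_2}(C_2)) \to H^0(B, \mathcal{O}_B(W))$ is unchanged and still an isomorphism, while the corresponding map on $S_1$ is analyzed via the short exact sequence
\[ 0 \to \mathcal{O}_{S_1}(B) \otimes \mathcal{I}_{W/S_1} \to \mathcal{O}_{S_1}(B+N) \otimes \mathcal{I}_{W/S_1} \to \mathcal{O}_N(N \cdot (B+N)) \to 0, \]
which reduces the problem to the genus $14$ computation up to a controlled extra contribution from the $(-1)$-curve $N$, yielding exactly the expected kernel of dimension $3$ in the gluing sequence \eqref{eq:pushforwardF}.

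Next, I would exhibit a general pencil through two points $p_1, p_2 \in C_o$, chosen one on each component of $C_o$. Compatible pencils $\Gamma_1 \subseteq |\mathcal{O}_{S_1}(C_1) \otimes \mathcal{I}_{Z_1 \cup \{p_1\}/S_1}|$ and $\Gamma_2 \subseteq |\mathcal{O}_{S_2}(C_2) \otimes \mathcal{I}_{\{p_2\}/S_2}|$ are glued along $B$ by matching their restrictions to $H^0(B, \mathcal{O}_B(W))$. On each $S_i$ these correspond to general pencils of plane curves with prescribed base points on the underlying blow-up model, so by Bertini every member is irreducible and at worst nodal, with nodes disjoint from $B$. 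To rule out an intersection with $\Delta_{1:\varnothing}$, I would enumerate the dual graphs of singular members as in \eqref{dual.graph}: a Hodge index argument applied to any hypothetical splitting $C_i \sim F + M$ with $(F \cdot M) = 1$ on $S_i$ forces one component to be rational, and the extra decomposition $C_1 \sim N + B'$ with $B' \in |B|$ made possible by the summand $N$ also has a rational component.

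The main obstacle I anticipate is precisely this slightly enlarged Hodge-index case analysis coming from the extra $(-1)$-curve $N$: one has to check that every decomposition $C_1 \sim F + M$ with $(F \cdot M) = 1$ has a rational component, which is a finite check using $C_1^2$ and the classification of $(-1)$-curves on a del Pezzo surface of degree $6$. Once this is done, the enumeration of dual graphs and the cohomological input into Proposition \ref{prop:smoothing} go through exactly as in Lemmas \ref{lemma:genus13pencil} and \ref{lemma2.gen14}.
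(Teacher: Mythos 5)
Your overall strategy is exactly the paper's: degenerate to the normal crossing surface $S_o=S_1\cup S_2$ of Section \ref{sec:genus14} with $C_1\in|\mathcal{O}_{S_1}(B+N)\otimes\mathcal{I}_{W/S_1}|$, invoke \cite[Theorem 3.11]{BV} for the smoothing, apply Proposition \ref{prop:smoothing} after checking that the kernel of the gluing map \eqref{eq:pushforwardF} is $3$-dimensional, verify nodality of a general degenerate pencil via \cite{CH}, and finish with the dual-graph analysis to rule out $\Delta_{1:\varnothing}$. However, there is a concrete error in the central cohomological step. The quantity that Proposition \ref{prop:smoothing} requires is $h^0\bigl(S_1,\mathcal{O}_{S_1}(C_1)\otimes\mathcal{I}_{Z_1/S_1}\bigr)$ together with the restriction map to $H^0(B,\mathcal{O}_B(W))$, where $Z_1=(C_1\cap B)\setminus W$ is the non-Cartier locus. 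Your exact sequence
\[ 0 \to \mathcal{O}_{S_1}(B) \otimes \mathcal{I}_{W/S_1} \to \mathcal{O}_{S_1}(B+N) \otimes \mathcal{I}_{W/S_1} \to \mathcal{O}_N\bigl(N\cdot(B+N)\bigr) \to 0 \]
is a valid sequence, but it computes the dimension of the linear system $|\mathcal{O}_{S_1}(B+N)\otimes\mathcal{I}_{W/S_1}|$ from which $C_1$ is drawn (a space of dimension far larger than $3$: restricting $\mathcal{O}_{S_1}(B)\otimes\mathcal{I}_W$ to $B$ gives a nonspecial degree-$18$ divisor on the genus-$7$ curve $B$, so that term alone contributes $h^0=13$), not the $3$-dimensional space entering \eqref{eq:pushforwardF}. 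The correct sequence is the restriction to $B$,
\[ 0\to \mathcal{O}_{S_1}(N)\to \mathcal{O}_{S_1}(C_1)\otimes \mathcal{I}_{Z_1/S_1}\to \mathcal{O}_{B}(W)\to 0, \]
whose kernel $\mathcal{O}_{S_1}(N)$ has $h^0=1$ and $h^1=0$ since $N$ is a rigid $(-1)$-curve on the degree-$6$ del Pezzo; this gives $h^0=1+2=3$ and surjectivity onto $H^0(B,\mathcal{O}_B(W))$, which combined with the isomorphism $H^0(S_2,\mathcal{O}_{S_2}(C_2))\cong H^0(B,\mathcal{O}_B(W))$ yields the expected $3$-dimensional kernel.

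A second, smaller issue: imposing $p_2$ as a base point on $S_2$ cannot produce a pencil there, since $h^0(S_2,\mathcal{O}_{S_2}(C_2))=2$ already, so $|\mathcal{O}_{S_2}(C_2)\otimes\mathcal{I}_{p_2}|$ is a single curve and the glued family would be isotrivial on the $S_2$ side. The paper instead takes a \emph{general} pencil $\Gamma_1\subseteq|\mathcal{O}_{S_1}(C_1)\otimes\mathcal{I}_{Z_1/S_1}|$, which induces the complete pencils on $B$ and on $S_2$; the marked points $p_1,p_2$ only enter on the nearby smooth fibers, where a general pencil in the net $|\mathcal{O}_S(C)|$ is of the form $|\mathcal{O}_S(C)\otimes\mathcal{I}_{p_1,p_2}|$. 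With these two corrections your argument matches the paper's proof; the concluding Hodge-index and dual-graph analysis you outline is the right finish.
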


We add an alternative proof as an application of Proposition \ref{prop:smoothing}: in \cite[Thm. 3.11]{BV} Bruno and Verra prove that the surface $S$ is smooth by considering a degeneration to the reducible surface  $S_o = S_1\cup S_2$ of Section \ref{sec:genus14}. We consider pencils on the same surface in order to apply Proposition \ref{prop:smoothing}.

\begin{proof}[Proof of Lemma \ref{lemma:genus15pencil}]
	In \cite[Theorem 3.11]{BV} it is shown that the analogue of Lemma \ref{lemma1.gen14} holds in genus $15$ as well. Then the proof is completely analogous to that of Lemma \ref{lemma2.gen14}. We use the same notation here, keeping in mind that the curve $C_1$ is as in \eqref{eq:C1genus15}.

	We wish to apply Proposition \ref{prop:smoothing}, and to do so  we need to show that the kernel of the map
	\begin{equation}\label{eq2:lemma2.gen15}
	H^0(S_1,\mathcal{O}_{S_1}(C_1)\otimes \mathcal{I}_{Z_1/S_1} ) \oplus H^0(S_2,\mathcal{O}_{S_2}(C_2)) \longrightarrow H^0(B,\mathcal{O}_{B}(W)).
	\end{equation}
	has dimension three. We recall from the proof of Lemma \ref{lemma2.gen14} that the map 
	$$H^0(S_2,\mathcal{O}_{S_2}(C_2))\longrightarrow H^0(B,\mathcal{O}_B(W))$$
	is an isomorphism between two spaces of dimension $2$. Then, the exact sequence
	$$0\longrightarrow \mathcal{O}_{S_1}(N)\longrightarrow \mathcal{O}_{S_1}(C_1)\otimes \mathcal{I}_{Z_1/S_1}\longrightarrow \mathcal{O}_{B}(W)\longrightarrow0$$ 
	shows that $h^0(S_1,\mathcal{O}_{S_1}(C_1)\otimes \mathcal{I}_{Z_1/S_1})=3$ and that the map 
	$
	H^0(S_1,\mathcal{O}_{S_1}(C_1)\otimes \mathcal{I}_{Z_1/S_1})\longrightarrow H^0(B, \mathcal{O}_{B}(W))
	$  
	is surjective.  Thus, the kernel of the map \eqref{eq2:lemma2.gen15} has dimension $3$ as expected. Now, as in the proof of Lemma \ref{lemma2.gen14}, it is enough to show that every curve in a general pencil $\Gamma_1 \subseteq |\mathcal{O}_{S_1}(C_1)\otimes \mathcal{I}_{Z_1/S_1}|$ is at worst one-nodal and with all nodes away from $B$. This can also be done with Macaulay2 \cite{MacaulayCode}. Finally, by an analysis on possible dual graphs analogous to the proof of Lemma \ref{lemma:genus13pencil}, one concludes that the resulting pencil $\Gamma$ on $S_1\cup S_2$ does not intersect $\Delta_i$ for $i=1,\ldots,7$.
\end{proof}

\section{Kodaira dimension of $\overline{\mathcal{M}}_{12,n}$}
\label{sec5}

We turn our attention to pointed curves of genus $12$. The strategy is inspired by \cite{CR3,FV2}: we provide a bound by constructing nef curves that come from covering curves of $\overline{\mathcal{M}}_{11,n+2}$, pushed forward to $\overline{\mathcal{M}}_{12,n}$ via the $\Delta_{\rm{irr}}$-boundary map. This is the map
\begin{equation}
\label{boundary}
\theta = \theta_{g,n}:\overline{\mathcal{M}}_{g-1,n+2}\to\overline{\mathcal{M}}_{g,n}
\end{equation}
that glues the last two marked points. Standard formulas \cite[Ch. 17 Lemma 4.36]{ACG} in ${\rm{Pic}}_\qq\left({\overline{\mathcal{M}}}_{g-1,n+2}\right)$ give
\begin{equation}
\label{eq:canonical-pullback}
\begin{split}
\theta^*\delta_{\rm{irr}}&=\delta_{\rm{irr}}-\psi_{n+1}-\psi_{n+2}+\sum\delta_{i:\{n+1\}},\\
\theta^* K_{{\overline{\mathcal{M}}}_{g,n}}&=13\lambda+\psi_1+\ldots+\psi_n+2\psi_{n+1}+2\psi_{n+2}-2\delta-\delta_{1:\varnothing}-\delta_{0:\{n+1,n+2\}}
\end{split}
\end{equation}
Recall that if $X$ is a variety and $D$ a Cartier divisor on it, then the Iitaka dimension is defined as
\[ \kappa\left(X,D\right) = \dim \left( \bigoplus_{m=0}^{\infty} H^0(X,mD) \right) - 1 \]
when the quantity on the right is nonnegative, and is defined as $-\infty$ otherwise.

The following lemma is completely standard, see for example \cite{CR3, FV2}, but we include a proof for completeness.

\begin{lemma}
	\label{lemma1.sec3}
	Let $\gamma\in N_1(\overline{\mathcal{M}}_{g-1,n+2})$ be a nef curve class such that $(\gamma\cdot \theta^*\delta_{\rm{irr}})>0$. If 
	$$(\gamma\cdot \theta^*K_{\overline{\mathcal{M}}_{g,n}})<0$$ 
	then $\overline{\mathcal{M}}_{g,n}$ is uniruled. If instead $(\gamma\cdot \theta^*K_{\overline{\mathcal{M}}_{g,n}})=0$, then 
	$${\rm{Kod}}\left(\overline{\mathcal{M}}_{g,n}\right)\leq \kappa\left(\overline{\mathcal{M}}_{g-1,n+2}, \theta^* K_{\overline{\mathcal{M}}_{g,n}}\right).$$
\end{lemma}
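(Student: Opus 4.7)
The plan is to exploit, for every effective divisor $E$ on $\overline{\mathcal{M}}_{g,n}$, the decomposition
\[
E \;=\; a\,\Delta_{\rm{irr}} + E', \qquad a\geq 0,\ \Delta_{\rm{irr}}\not\subseteq \mathrm{Supp}(E').
\]
Since $\theta$ is a finite morphism onto its image $\Delta_{\rm{irr}}$, the scheme-theoretic preimage $\theta^{-1}(E')$ is a genuine effective divisor on $\overline{\mathcal{M}}_{g-1,n+2}$ of class $\theta^{*}E'$, so $(\gamma\cdot \theta^{*}E')\geq 0$ by the nefness of $\gamma$. Combined with $a\geq 0$ and the hypothesis $(\gamma\cdot \theta^{*}\delta_{\rm{irr}})>0$, the identity
\[
(\gamma\cdot \theta^{*}E) \;=\; a\,(\gamma\cdot \theta^{*}\delta_{\rm{irr}}) + (\gamma\cdot \theta^{*}E')
\]
shows that $(\gamma\cdot \theta^{*}E)\geq 0$, with equality forcing $a=0$.

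For the first assertion I would argue by contradiction: if $K_{\overline{\mathcal{M}}_{g,n}}$ were pseudo-effective it would be the limit of classes of effective $\mathbb{Q}$-divisors $D_k$, each of which satisfies $(\gamma\cdot \theta^{*}D_k)\geq 0$ by the previous display. Passing to the limit, continuity of the intersection pairing yields $(\gamma\cdot \theta^{*}K_{\overline{\mathcal{M}}_{g,n}})\geq 0$, contradicting the strict inequality in the hypothesis. Hence $K_{\overline{\mathcal{M}}_{g,n}}$ is not pseudo-effective and $\overline{\mathcal{M}}_{g,n}$ is uniruled by \cite{BDPP}.

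For the second assertion the plan is to show that the pullback
\[
\theta^{*}\colon H^0\bigl(\overline{\mathcal{M}}_{g,n},\,mK_{\overline{\mathcal{M}}_{g,n}}\bigr)\longrightarrow H^0\bigl(\overline{\mathcal{M}}_{g-1,n+2},\,m\,\theta^{*}K_{\overline{\mathcal{M}}_{g,n}}\bigr)
\]
is injective for every $m\geq 0$. Given a nonzero section $s$, write $\mathrm{div}(s)=a\,\Delta_{\rm{irr}}+D'$ as above; the hypothesis $(\gamma\cdot \theta^{*}K_{\overline{\mathcal{M}}_{g,n}})=0$ then forces $a=0$ by the previous analysis, so $s$ does not vanish identically on $\Delta_{\rm{irr}}$ and $\theta^{*}s\neq 0$. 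Injectivity for every $m$ yields $h^0(mK_{\overline{\mathcal{M}}_{g,n}})\leq h^0(m\,\theta^{*}K_{\overline{\mathcal{M}}_{g,n}})$, and taking Iitaka dimensions gives the desired bound. The only delicate point — and the nearest thing to an obstacle — is verifying that $\theta^{*}E'$ truly is represented by an effective divisor whenever $\Delta_{\rm{irr}}\not\subseteq \mathrm{Supp}(E')$, but this is immediate from the fact that $\theta$ is a finite surjection onto $\Delta_{\rm{irr}}$, so $\theta^{-1}(E'\cap \Delta_{\rm{irr}})$ is a codimension-one subscheme of the expected class.
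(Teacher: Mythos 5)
Your proposal is correct and follows essentially the same route as the paper: decompose an effective divisor as $a\,\Delta_{\rm irr}+E'$ with $\Delta_{\rm irr}\not\subseteq\mathrm{Supp}(E')$, use nefness of $\gamma$ and $(\gamma\cdot\theta^{*}\delta_{\rm irr})>0$ to force $a=0$ in the borderline case, and deduce the section-space inequality via restriction to $\Delta_{\rm irr}$ followed by pullback along the degree-two map $\theta$. If anything, your first part is slightly more careful than the paper's, which only rules out effectivity of $mK_{\overline{\mathcal{M}}_{g,n}}$ before invoking \cite{BDPP}, whereas your limit argument over effective $\mathbb{Q}$-classes addresses pseudo-effectivity directly.
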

\begin{proof}
	Suppose first that $(\gamma\cdot \theta^*K_{\overline{\mathcal{M}}_{g,n}})<0$ and assume that $mK_{\overline{\mathcal{M}}_{g,n}}$ is effective, for some $m>0$. Then this is numerically equivalent to a class $a\cdot \delta_{\rm{irr}}+D$, where $a\geq 0$ and $D$ is supported outside $\Delta_{\rm{irr}}$. Hence, $\theta^*D$ is an effective divisor on $\overline{\mathcal{M}}_{g-1,n+2}$ and then $(\gamma \cdot \theta^*D) \geq 0$. But then 
	\[ m(\gamma \cdot \theta^*K_{\overline{\mathcal{M}}_{g,n}}) = a\cdot (\gamma \cdot \theta^*\delta_{\rm{irr}}) + (\gamma \cdot \theta^*D) \geq 0 \]
	which is a contradiction.
	
	If instead $(\gamma\cdot \theta^*K_{\overline{\mathcal{M}}_{g,n}})=0$ the same reasoning shows that the class $mK_{\overline{\mathcal{M}}_{g,n}}-\delta_{\rm{irr}}$ is not effective  for any $m\geq 1$. From the exact sequence
	\begin{equation}
	\label{eq2.sec3}
	0\to H^0\left(\overline{\mathcal{M}}_{g,n},\mathcal{O}(mK_{\overline{\mathcal{M}}_{g,n}}-\delta_{\rm{irr}})\right)\to H^0\left(\overline{\mathcal{M}}_{g,n},\mathcal{O}(mK_{\overline{\mathcal{M}}_{g,n}})\right)\to H^0\left(\Delta_{\rm{irr}},\mathcal{O}(mK_{\overline{\mathcal{M}}_{g,n}})\right).
	\end{equation}
	we see that
	$$h^0\left(\overline{\mathcal{M}}_{g,n},\mathcal{O}(mK_{\overline{\mathcal{M}}_{g,n}})\right)\leq h^0\left(\Delta_{\rm{irr}},\mathcal{O}(mK_{\overline{\mathcal{M}}_{g,n}})\right).$$
	The map $\theta_{g,n}\colon \overline{\mathcal{M}}_{g-1,n+2}\to \Delta_{\rm{irr}}$ has degree two and it is simply ramified along $\Delta_{0:\{n+1,n+2\}}$. In any case, the following inequality holds:
	$$h^0\left(\Delta_{\rm{irr}},\mathcal{O}(mK_{\overline{\mathcal{M}}_{g,n}})\right)\leq h^0\left(\overline{\mathcal{M}}_{g-1,n+2}, \mathcal{O}(m\cdot \theta^\star K_{\overline{\mathcal{M}}_{g,n}})\right).$$
\end{proof}

Now we can conclude the proof of Theorem \ref{thm1}.

\begin{proposition}
	The moduli spaces  $\overline{\mathcal{M}}_{12,6}$ and $\overline{\mathcal{M}}_{12,7}$ are uniruled.
\end{proposition}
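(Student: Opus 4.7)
My plan is to apply Lemma~\ref{lemma1.sec3} to the boundary gluing map $\theta\colon \overline{\mathcal{M}}_{11,n+2} \to \overline{\mathcal{M}}_{12,n}$ for $n \in \{6,7\}$, with a nef curve class $\gamma$ on $\overline{\mathcal{M}}_{11,n+2}$ coming from a pencil on a K3 surface. By Mukai's theorem, genus~$11$ is the last value of the genus for which a general curve $C$ lies on a polarized K3 surface $S$, and one can arrange that $\operatorname{Pic}(S) = \mathbb{Z}\cdot C$, $h^0(S,\mathcal{O}_S(C))=g+1=12$, and $C^2=2g-2=20$. Given a general $(C,p_1,\ldots,p_{n+2})\in\overline{\mathcal{M}}_{11,n+2}$, the linear system $|\mathcal{O}_S(C)\otimes\mathcal{I}_{p_1,\ldots,p_{n+2}}|$ has dimension $9-n\geq 2$, so I choose a general pencil $\Gamma$ inside it.

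The pencil $\Gamma$ has $20$ base points consisting of the $n+2$ marked points together with $18-n$ additional ones, and after blowing these up I obtain a smooth fibration $\widetilde{S}\to\Gamma\cong\mathbb{P}^1$ of genus-$11$ curves with $n+2$ disjoint sections (the exceptional divisors over the $p_i$). Because $\operatorname{Pic}(S)=\mathbb{Z}\cdot C$ all fibers are irreducible, and for a general pencil all singular fibers are one-nodal with the sections avoiding the nodes. Thus $\widetilde{S}\to\Gamma$ is a family of stable $(n+2)$-pointed curves inducing a numerical class $\gamma\in N_1(\overline{\mathcal{M}}_{11,n+2})$, and $\gamma$ is nef because the same construction can be done through every general point of $\overline{\mathcal{M}}_{11,n+2}$.

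Since all fibers are irreducible, $\gamma$ meets only $\delta_{\rm{irr}}$ among the boundary divisors, and the formulas recalled in Section~\ref{sec1} give
\[ \gamma\cdot\lambda \,=\, \chi(\mathcal{O}_S) + (g-1) \,=\, 12, \qquad \gamma\cdot\delta_{\rm{irr}} \,=\, \chi_{\rm{top}}(\widetilde{S}) + 2(g-1) \,=\, 44+40 \,=\, 84, \qquad \gamma\cdot\psi_i \,=\, 1 \text{ for all } i, \]
with all remaining boundary intersections zero. Plugging into \eqref{eq:canonical-pullback} yields
\[ \gamma\cdot\theta^{*}K_{\overline{\mathcal{M}}_{12,n}} \;=\; 13\cdot 12 + n + 2\cdot 2 - 2\cdot 84 \;=\; n-8, \]
equal to $-2$ for $n=6$ and $-1$ for $n=7$, together with $\gamma\cdot\theta^{*}\delta_{\rm{irr}}=84-1-1=82>0$. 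Lemma~\ref{lemma1.sec3} then gives uniruledness in both cases.

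The main delicate point will be to verify carefully that a general pencil on the K3 really does yield a family of stable $(n+2)$-pointed curves avoiding the boundary strata $\Delta_{1:\varnothing}$ and $\Delta_{0:\{n+1,n+2\}}$ which enter negatively in $\theta^{*}K_{\overline{\mathcal{M}}_{12,n}}$. Irreducibility of all fibers (from $\operatorname{Pic}(S)=\mathbb{Z}\cdot C$) rules out any $\delta_{i:S}$ with $i\geq 1$ or $S\neq\varnothing$, so this reduces to checking that the sections $s_1,\ldots,s_{n+2}$ are disjoint from the nodes of the singular fibers, which is automatic by generic choice of the pencil.
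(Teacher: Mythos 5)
Your proposal is correct and follows essentially the same route as the paper: a general pencil through general points on a genus $11$ K3, the resulting nef class $\gamma$ with $(\gamma\cdot\lambda)=12$, $(\gamma\cdot\psi_i)=1$, $(\gamma\cdot\delta_{\rm irr})=84$, and Lemma \ref{lemma1.sec3} applied to the gluing map $\theta$. The only (harmless) difference is that you treat $n=6$ directly on $\overline{\mathcal{M}}_{11,8}$, whereas the paper does the computation once for $n=7$ and deduces the uniruledness of $\overline{\mathcal{M}}_{12,6}$ from that of $\overline{\mathcal{M}}_{12,7}$.
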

\begin{proof}
	Let $(S,H)$ be a general polarized K3 surface of genus eleven and $x_1,\ldots,x_{9}$ general points on $S$. Fix a general pencil $\Gamma \subseteq |\mathcal{O}_S(H)|$ passing through the points $x_i$. By Mukai's construction (see \cite{M} and the proof of Theorem \ref{thm:thm3again} below ) the pencil gives a nef curve class $\gamma\in\overline{\mathcal{M}}_{11,9}$ and the formulas \eqref{standard} give us that $\gamma$ intersects the generators of the Picard group of $\overline{\mathcal{M}}_{11,9}$ as follows 
	\begin{equation}
	\label{eq1.sec4}
	(\gamma\cdot\lambda)=12, \hspace{.3cm} (\gamma\cdot\psi_i)=1, \hspace{.3cm} (\gamma\cdot\delta_{\rm{irr}})= 84,\hspace{.3cm} \hbox{and}\hspace{.3cm} (\gamma\cdot\delta_{i:S})=0.
	\end{equation}
	Let us consider the  gluing map $\theta\colon \overline{\mathcal{M}}_{11,9}\longrightarrow\overline{\mathcal{M}}_{12,7}$: from the formulas in \eqref{eq:canonical-pullback} one computes $(\gamma\cdot\theta^*\delta_{\rm{irr}})=82$ and $(\gamma\cdot \theta^*K_{\overline{\mathcal{M}}_{12,7}})=-1$. Then, Lemma \ref{lemma1.sec3} shows that $\overline{\mathcal{M}}_{12,7}$ is uniruled and then $\overline{\mathcal{M}}_{12,6}$ is uniruled as well.
\end{proof}

Finally, we can bound the Kodaira dimension of $\overline{\mathcal{M}}_{12,8}$.

\begin{theorem}\label{thm:thm3again}
	The Kodaira dimension of $\overline{\mathcal{M}}_{12,8}$ is bounded by $\dim\left(\overline{\mathcal{M}}_{12,8}\right)-2$. 
\end{theorem}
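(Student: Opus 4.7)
The plan is to repeat the K3-pencil construction used for $\overline{\mathcal{M}}_{12,7}$, but with one extra marked base point. Let $(S,H)$ be a general polarized K3 surface of genus $11$, so that $H^2=20$ and $\dim|H|=11$; I fix general points $x_1,\ldots,x_{10}\in S$ and take the pencil $\Gamma\subseteq|H|$ of curves through all of them, which exists and is one-dimensional because $10$ general points impose independent conditions on $|H|$. By Mukai, a general genus-$11$ curve lies on such a K3, so these pencils yield a covering — and hence movable — family of curves in $\overline{\mathcal{M}}_{11,10}$; in particular the induced curve class $\gamma\in N_1(\overline{\mathcal{M}}_{11,10})$ is nef and movable.

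Resolving $\Gamma$ by blowing up its $20$ base points gives $\mathcal{X}\to\mathbb{P}^1$, a family of genus-$11$ curves with $10$ disjoint sections arising from the exceptional divisors above $x_1,\ldots,x_{10}$. From the standard formulas recalled before Lemma \ref{lemma1.sec1}, and the fact that a general Lefschetz pencil on a polarized K3 has only irreducible nodal singular fibers, I expect
\begin{equation*}
(\gamma\cdot\lambda)=12,\qquad (\gamma\cdot\psi_i)=1,\qquad (\gamma\cdot\delta_{\mathrm{irr}})=84,\qquad (\gamma\cdot\delta_{i:S})=0
\end{equation*}
for every reducible boundary class. Feeding these into the pullback formula \eqref{eq:canonical-pullback} for the boundary gluing map $\theta\colon \overline{\mathcal{M}}_{11,10}\to\overline{\mathcal{M}}_{12,8}$ yields
\begin{equation*}
(\gamma\cdot\theta^* K_{\overline{\mathcal{M}}_{12,8}}) = 13\cdot 12 + 8\cdot 1 + 2\cdot 1 + 2\cdot 1 - 2\cdot 84 = 0.
\end{equation*}

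By Lemma \ref{lemma1.sec3}, this gives $\Kod(\overline{\mathcal{M}}_{12,8})\leq \kappa(\overline{\mathcal{M}}_{11,10},\theta^* K_{\overline{\mathcal{M}}_{12,8}})$. To finish, I observe that since $\gamma$ is nonzero and movable, and since any big divisor pairs strictly positively with every nonzero movable curve (writing $D=A+E$ with $A$ ample and $E$ effective pseudoeffective gives $(\gamma\cdot D)\geq(\gamma\cdot A)>0$ by \cite{BDPP}), the vanishing $(\gamma\cdot\theta^* K_{\overline{\mathcal{M}}_{12,8}})=0$ forces $\theta^* K_{\overline{\mathcal{M}}_{12,8}}$ not to be big. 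Hence $\kappa(\overline{\mathcal{M}}_{11,10},\theta^* K_{\overline{\mathcal{M}}_{12,8}})\leq \dim\overline{\mathcal{M}}_{11,10}-1=39=\dim\overline{\mathcal{M}}_{12,8}-2$, giving the stated bound.

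The main technical point is verifying that the general pencil $\Gamma$ genuinely avoids every reducible boundary component of $\overline{\mathcal{M}}_{11,10}$, so that $(\gamma\cdot\delta_{i:S})=0$ and, crucially, $(\gamma\cdot\delta_{1:\varnothing})=(\gamma\cdot\delta_{0:\{9,10\}})=0$; any stray contribution would spoil the delicate arithmetic that produces the value $0$. This reduces to showing that every curve in $\Gamma$ is irreducible with only nodal singularities — standard for a general Lefschetz pencil on a K3 — together with the fact that the ten exceptional sections are pairwise disjoint and meet only smooth points of the fibers, which holds because the base points $x_1,\ldots,x_{10}$ are general. These checks are direct analogues of those performed in the $\overline{\mathcal{M}}_{12,7}$ case, but they must be revisited when one marks ten rather than nine of the twenty base points.
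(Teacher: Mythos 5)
Your construction and computation coincide with the paper's: the same genus-$11$ K3 pencil through ten general points, the same intersection numbers $(\gamma\cdot\lambda)=12$, $(\gamma\cdot\psi_i)=1$, $(\gamma\cdot\delta_{\mathrm{irr}})=84$, the same vanishing $(\gamma\cdot\theta^*K_{\overline{\mathcal{M}}_{12,8}})=0$, and the same appeal to Lemma \ref{lemma1.sec3}. (You should also record $(\gamma\cdot\theta^*\delta_{\mathrm{irr}})=84-2=82>0$, which is a hypothesis of that lemma, but it is immediate from your numbers.)

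The one place where your route genuinely differs is the final bound on $\kappa\bigl(\overline{\mathcal{M}}_{11,10},\theta^*K_{\overline{\mathcal{M}}_{12,8}}\bigr)$. The paper invokes Mukai's explicit structure: birationally $\overline{\mathcal{M}}_{11,10}$ is a $\pp^1$-bundle over $\mathcal{F}_{11,10}$ whose fibers are exactly these pencils, and a divisor intersecting the general fiber of a fibration trivially has Iitaka dimension at most $\dim-1$. You instead argue abstractly that a divisor pairing to zero with a nonzero movable (covering) curve class cannot be big, hence has Iitaka dimension at most $\dim-1$; this is correct (Kodaira's lemma $D=A+E$ plus the fact that a covering curve meets effective divisors nonnegatively and ample ones positively) and is marginally more general, since it does not require knowing that the covering curves are fibers of an actual fibration --- only that they cover. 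Note, however, that you still need Mukai's birationality statement anyway, to know that your pencils pass through a \emph{general} point of $\overline{\mathcal{M}}_{11,10}$ (i.e.\ that a general $10$-pointed genus-$11$ curve arises from ten general points on a K3); so the two arguments consume the same input, and yours simply extracts less from it. Both are valid, and the arithmetic $\dim\overline{\mathcal{M}}_{11,10}-1=39=\dim\overline{\mathcal{M}}_{12,8}-2$ closes the proof either way.
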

\begin{proof}
	Let $(S,H)$ be a general polarized K3 surface of genus $11$ and $x_1,\dots,x_{10}$ general points. The pencil $\Gamma = |\mathcal{O}_S(C)\otimes \mathcal{I}_{x_1,\dots,x_{10}}|$ induces a nef curve class $\gamma$ on $\overline{\mathcal{M}}_{11,10}$ and for its pushforward along $\theta\colon \overline{\mathcal{M}}_{11,10} \to \overline{\mathcal{M}}_{12,8}$ we see that
	$$(\gamma\cdot \theta^*K_{\overline{\mathcal{M}}_{12,8}})=0\hspace{.5cm}\hbox{and}\hspace{.5cm}(\gamma\cdot\theta^*\delta_{\rm{irr}})=82.$$
	Then Lemma \ref{lemma1.sec3} shows that
	$${\rm{Kod}}\left(\overline{\mathcal{M}}_{12,8}\right)\leq \kappa\left(\overline{\mathcal{M}}_{11,10}, \theta^* K_{\overline{\mathcal{M}}_{12,8}}\right).$$
	We proceed to bound the right hand side. Recall Mukai's construction \cite{M}: there exists birationally a $\pp^1$-bundle $\mathcal{P}$ over the moduli space $\mathcal{F}_{11,10}$, where the general fiber over $(S,H,x_1,\ldots,x_{10})\in\mathcal{F}_{11,10}$ is the linear system of curves in $|H|$ passing through the points $x_i$
	\begin{equation}
	\label{eq2.sec4}
	\pp^1\cong \left|\mathcal{O}_S(H)\otimes\mathcal{I}_{x_1,\ldots,x_{10}}\right|.
	\end{equation}
	
	The natural map $\mathcal{P}\dashrightarrow \overline{\mathcal{M}}_{11,10}$ is birational and it is defined on the complete general fiber of $\mathcal{P}\to\mathcal{F}_{11,10}$, i.e., every $10$-pointed curve in the general linear system \eqref{eq2.sec4} is stable. Moreover, the push forward of the general fiber intersects $\theta^* K_{\overline{\mathcal{M}}_{12,8}}$ trivially. Thus,
	$$\kappa\left(\overline{\mathcal{M}}_{11,10}, \theta^* K_{\overline{\mathcal{M}}_{12,8}}\right)\leq \dim(\overline{\mathcal{M}}_{11,10})-1.$$ 
	
	and we conclude.
\end{proof}

\section{Bound on the Kodaira dimension of $\overline{\mathcal{M}}_{16}$}
\label{sec4}

Finally, we turn our attention to curves of genus $16$. The proof is the same as in \cite{FV2} but their key technical result \cite[Proposition 4]{FV2} is replaced by our Lemma \ref{lemma:genus15pencil}, which we proved via Proposition \ref{prop:smoothing}. Furthermore, we observe that the bound on the Kodaira dimension can be pushed below by one.

More precisely, we provide a bound by constructing nef curves that come from covering curves of $\overline{\mathcal{M}}_{15,2}$, pushed forward to $\overline{\mathcal{M}}_{16}$ via the $\Delta_{\rm{irr}}$-boundary map.

Thus, let $\left(C,p_1,p_2\right)$ be a general $2$-pointed curve of genus $15$ and 
$$\Gamma=\left|\mathcal{O}_{S}(C)\otimes\mathcal{I}_{p_1,p_2}\right|$$
the pencil of Lemma \ref{lemma:genus15pencil}. Standard computations and the formulas \eqref{standard} lead to the following intersection numbers between $\Gamma$ and the generators of $\Pic_\qq\left(\overline{\mathcal{M}}_{15,2}\right)$:
\begin{equation}\label{eq1.sec4}
(\Gamma\cdot\lambda)=22,\,\,\, (\Gamma\cdot \delta)=145, \,\,\, \hbox{and}\,\,\,(\Gamma\cdot \psi_1)=(\Gamma\cdot\psi_2)=1.
\end{equation}

\begin{theorem}
The Kodaira dimension of $\overline{\mathcal{M}}_{16}$ is bounded by $\dim \overline{\mathcal{M}}_{16}-2$.
\end{theorem}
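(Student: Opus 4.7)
The plan is to apply Lemma \ref{lemma1.sec3} to the pushforward of the pencil $\Gamma$ of Lemma \ref{lemma:genus15pencil} along the gluing map $\theta\colon \overline{\mathcal{M}}_{15,2}\to \overline{\mathcal{M}}_{16}$, in complete analogy with the proof of Theorem \ref{thm:thm3again}. The improvement by one compared to the Farkas--Verra bound will come from pairing this with a Mukai-type covering argument.

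The first step is to compute the two intersection numbers that are the input of Lemma \ref{lemma1.sec3}. Using the pullback formula \eqref{eq:canonical-pullback} with $g=16$, $n=0$, together with \eqref{eq1.sec4}, condition $(\star\star)$ giving $(\Gamma\cdot\delta_{1:\varnothing})=0$, the fact that the two marked points $p_1,p_2$ remain distinct smooth points on every curve of $\Gamma$ (so $(\Gamma\cdot\delta_{0:\{1,2\}})=0$), and the irreducibility of every curve in $\Gamma$ granted by Lemma \ref{lemma:genus15pencil} (which forces $(\Gamma\cdot\delta)=(\Gamma\cdot\delta_{\rm{irr}})=145$ and all $\delta_{i:\{1\}}$-intersections to vanish), I expect to find
\[
(\Gamma\cdot \theta^* K_{\overline{\mathcal{M}}_{16}})=13\cdot 22+2\cdot 1+2\cdot 1-2\cdot 145=0,\qquad (\Gamma\cdot \theta^*\delta_{\rm{irr}})=145-1-1=143>0.
\]
Lemma \ref{lemma1.sec3} then applies in the equality case and yields $\Kod(\overline{\mathcal{M}}_{16})\leq \kappa\bigl(\overline{\mathcal{M}}_{15,2},\theta^*K_{\overline{\mathcal{M}}_{16}}\bigr)$.

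The main work is to bound this Iitaka dimension from above by $\dim\overline{\mathcal{M}}_{15,2}-1=43$. Mimicking the Mukai construction used in Theorem \ref{thm:thm3again}, I would introduce a parameter space $\mathcal{A}$ whose general point is a triple $(S,p_1,p_2)$, where $S\subset \pp^6$ is a smooth $(2,2,2,2)$ complete intersection containing a genus $15$ curve $C$ in a fixed Picard class, and $\mathcal{O}_C(C)\in W^1_9(C)$ does not separate $p_1,p_2$; over $\mathcal{A}$ there is a natural $\pp^1$-bundle $\mathcal{P}$ whose fiber over $(S,p_1,p_2)$ is the pencil $|\mathcal{O}_S(C)\otimes\mathcal{I}_{p_1,p_2}|$. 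The induced rational map $\mathcal{P}\dashrightarrow \overline{\mathcal{M}}_{15,2}$, sending $(S,p_1,p_2,C')$ to $(C',p_1,p_2)$, is defined on the complete general fiber of $\mathcal{P}\to \mathcal{A}$ by property $(\star\star)$ of Lemma \ref{lemma:genus15pencil}, and is dominant and generically finite, since for a general $(C',p_1,p_2)\in \overline{\mathcal{M}}_{15,2}$ only finitely many $L\in W^1_9(C')$ fail to separate $p_1,p_2$, each determining a unique $S$. Hence $\dim\mathcal{P}=\dim\overline{\mathcal{M}}_{15,2}=44$, so $\dim\mathcal{A}=43$. Since the pushforward of the general fiber of $\mathcal{P}\to \mathcal{A}$ is our pencil class $\Gamma$ with $(\Gamma\cdot\theta^*K_{\overline{\mathcal{M}}_{16}})=0$, sections of any multiple of $\theta^*K_{\overline{\mathcal{M}}_{16}}$ pull back to $\mathcal{P}$, restrict trivially to these $\pp^1$-fibers and therefore descend to $\mathcal{A}$, giving
\[
\kappa\bigl(\overline{\mathcal{M}}_{15,2},\theta^*K_{\overline{\mathcal{M}}_{16}}\bigr)\leq \dim\mathcal{A}=\dim\overline{\mathcal{M}}_{16}-2.
\]

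The main obstacle is setting up the parameter space $\mathcal{A}$ carefully enough to guarantee the dominance and generic finiteness of $\mathcal{P}\dashrightarrow \overline{\mathcal{M}}_{15,2}$, together with the stability of every curve in the general pencil: all of this is already packaged in Lemma \ref{lemma:genus15pencil} and the Bruno--Verra result on smoothability of $(2,2,2,2)$ complete intersection surfaces used in its proof.
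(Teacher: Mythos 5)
Your proposal is correct and follows essentially the same route as the paper: the same pencil $\Gamma$ from Lemma \ref{lemma:genus15pencil}, the same computation $(\Gamma\cdot\theta^*K_{\overline{\mathcal{M}}_{16}})=286+4-290=0$ feeding into Lemma \ref{lemma1.sec3}, and the same Iitaka-dimension bound via a generically finite, birationally $\mathbb{P}^1$-bundle parameter space on whose fibers $\theta^*K_{\overline{\mathcal{M}}_{16}}$ is trivial. The only cosmetic difference is that the paper realizes your space $\mathcal{A}$ as the image $B$ of an incidence correspondence $\Sigma\subset\mathcal{G}\times\mathcal{W}$ between marked complete-intersection surfaces and pairs $(C,L)$ with $L\in W^1_9(C)$.
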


\begin{proof}
Let $\theta:\overline{\mathcal{M}}_{15,2}\to \overline{\mathcal{M}}_{16}$ be the $\delta_{\rm{irr}}$-boundary map that glues the two marked points. By construction the curve $\Gamma$ is a covering curve for $\overline{\mathcal{M}}_{15,2}$. Now, from \eqref{eq:canonical-pullback} and \eqref{eq1.sec4} we have 
\begin{equation}
\label{eq:genus16intersection0}
(\Gamma\cdot \theta^*K_{\overline{\mathcal{M}}_{16}})  = 13(\Gamma\cdot\lambda)+2(\Gamma\cdot(\psi_1+\psi_2))-2(\Gamma\cdot\delta)-(\Gamma\cdot\delta_{1:\varnothing})-(\Gamma\cdot\delta_{0:\{1,2\}})=0 
\end{equation} 
and Lemma \ref{lemma1.sec3} applies; 
\begin{equation}
\label{eq2.sec4}
{\rm{Kod}}(\overline{\mathcal{M}}_{16})\leq \kappa\left(\overline{\mathcal{M}}_{15,2}, \theta^\star K_{\overline{\mathcal{M}}_{16}}\right).
\end{equation}
We will bound the right hand side.
Let $\mathcal{G}$ be the moduli space of smooth $(2,2,2,2)$ complete intersection surfaces $S\subseteq\pp^6$, together with two marked points $p_1,p_2\in S$, everything up to a linear change of coordinates. This space can be seen as the GIT quotient by $PGL_7$  
of an appropriate universal family over an open subset of ${\rm{Gr}}\left(4,H^0\left(\mathcal{O}_{\pp^6}(2)\right)\right)$.

Let also $\mathcal{W}$ be the space that parameterizes isomorphism classes of tuples $(C,L)$, where $C\in\mathcal{M}_{15}$ is a genus $15$ curve and $L\in W^1_9(C)$. This dominates $\mathcal{M}_{15}$ with one dimensional general fiber. We consider the incidence correspondence 
\begin{equation}
\label{eq3.sec4}
\begin{tikzcd}
&\Sigma\subset\mathcal{G}\times\mathcal{W}\arrow[dl, "p"']\arrow[dr, "q"]&\\
\mathcal{G}&&\mathcal{W}
\end{tikzcd}
\end{equation}
consisting of points $(S,p_1,p_2,C,L)\in \Sigma\subset\mathcal{G}\times \mathcal{W}$ such that $C\subset S$, $p_1,p_2\in C$, $\mathcal{O}_C(C)\cong L$ and $L$ does not separate the points $p_1,p_2$. We consider the fiber of the map $f\colon \Sigma\dashrightarrow \mathcal{M}_{15,2}$: for a general pointed curve $[C,p_1,p_2] \in \mathcal{M}_{15,2}$ the space $W^1_9(C)$ is one-dimensional and there are finitely many $L\in W^1_9(C)$ which do not separate $p_1,p_2$, see also \cite[Section 2, page 6]{FV2}. Then the discussion in Section \ref{sec.gen15}, shows that  each such $L$ determines uniquely an element $(S,p_1,p_2)\in \mathcal{G}$. Thus the map $f$ is generically finite. 
Moreover, the map $p$ is birationally a $\pp^1$-bundle over the image. The fiber over a general point in the image $p(S,C,p_1,p_2,L)=(S,p_1,p_2)\in\mathcal{G}$ is given by the pencil 
$$\Gamma \cong \left\{(S,C_t,p_1,p_2,L)\mid C_t\in|\mathcal{O}_S(C)\otimes\mathcal{I}_{p_1,p_2}|\right\},$$
where $C_t$ is always stable, cf. Lemma \ref{lemma:genus15pencil}. Let $B\subset\mathcal{G}$ be the image of $p$ and consider the generically finite map to $\overline{\mathcal{M}}_{15,2}$
\begin{equation}
\label{eq4.sec4}
\begin{tikzcd}
\Sigma\arrow[dashed, r, "f"]\arrow[d, "p"]&\overline{\mathcal{M}}_{15,2}\\
B.&
\end{tikzcd}
\end{equation}
The map $f$ is defined over the complete general fiber $F$ of $p$, and the computations of (\ref{eq:genus16intersection0}) show that
$$(f_* F\cdot \theta^* K_{\overline{\mathcal{M}}_{16}})=0.$$
Thus, the Iitaka dimension of $\theta^* K_{\overline{\mathcal{M}}_{16}}$ is bounded by the dimension of $B$ and this together with the inequality \eqref{eq2.sec4} give us the desired bound.
\end{proof}

\section*{Acknowledgements}
 We would like to thank Gabi Farkas and Sandro Verra for many interesting conversations, correspondence, and for sharing with us the results of \cite{FV2}. We are grateful to the anonymous referee for their useful comments and remarks. The second author would also like to thank Pedro Montero and Jenia Tevelev for helpful conversations.


\end{document}